\newtheorem{theorem}{Theorem}[section]
\newtheorem{definition}[theorem]{Definition}
\newtheorem{lemma}[theorem]{Lemma}
\newtheorem{corollary}[theorem]{Corollary}
\newtheorem{proposition}[theorem]{Proposition}
\newtheorem{problem}[theorem]{Problem}
\numberwithin{equation}{section}
 \def\R{\mathbb{R}}
 \def\K{\mathscr K}
 \def\C{\mathcal C}
 \def\dom{{\mathrm {dom}}}
 \def\L{\mathscr L}
 \def\A{\mathscr A}
 \def\sphere{S^{n-1}}
 \def\ball{B_1}
\begin{document}
\title{Geometry of log-concave functions: the $L_p$ Asplund sum and the $L_{p}$ Minkowski problem \footnote{Keywords:   Log-concave functions, $L_p$ Asplund sum,   $L_p$ Minkowski problem, $L_p$ surface area measure,  Moment measures, Pr\'ekopa-Leindler inequality,  Variational formula.}}

\author{Niufa Fang, Sudan Xing  and Deping Ye}
\date{}
\maketitle

\begin{abstract}
		
The aim of this paper is to develop a basic framework of the $L_p$ theory for the geometry of log-concave functions, which can be viewed as a functional ``lifting" of  the $L_p$ Brunn-Minkowski theory for convex bodies. To fulfill this goal, by  combining the $L_p$ Asplund sum of log-concave functions for all $p>1$ and the total mass, we obtain a Pr\'ekopa-Leindler type inequality and propose a definition for the first variation of the total mass in the $L_p$ setting. Based on these, we further establish an $L_p$ Minkowski type inequality related to the first variation of the total mass and derive a variational formula which motivates the definition of our $L_p$ surface area measure for log-concave functions. Consequently,  the $L_p$ Minkowski problem for log-concave functions, which aims to characterize the $L_p$ surface area measure for log-concave functions, is introduced. The existence of solutions to  the $L_p$ Minkowski problem for log-concave functions is obtained for $p>1$ under some mild conditions on the pre-given Borel measures.		
		
\vskip 0.3cm

2010 Mathematics Subject Classification: 26B25 (primary), 26D10, 52A40. 

\end{abstract}
		
\section{Introduction and overview of the main results} \setcounter{equation}{0}
 A function $f: \R^n\to \R$ is said to be log-concave if $-\log f: \R^n\to \R\cup\{+\infty\}$ is a convex function. It is well-known that the  log-concave functions behave in many aspects akin to convex bodies (i.e., compact convex sets with nonempty interiors, and the set of all convex bodies is denoted by  $\K^n$), which is considered as an analytic ``lifting" of geometry of convex bodies. In convex geometry, the Brunn-Minkowski theory (and its extensions) for convex bodies encompasses a large and growing range of fundamental results on the algebraic and geometric properties of convex bodies, and hence to study the parallel algebraic and geometric properties of log-concave functions is of great significance and in great demand. Recent years have witnessed that many results and notions in the Brunn-Minkowski theory (and its extensions) for convex bodies have found their functional analogues,  including but not limited to the functional Blaschke-Santal\'{o} type inequality and its inverse \cite{AKM04,AS15,Ball88-2,BBF14,FM07,FM08, KM05},  (John,  Lutwak-Yang-Zhang, and L\"owner) ellipsoids  for log-concave functions \cite{AMJV18, FZ18,LSW19jga}, Rogers-Shephard type inequality and its reverse  for log-concave functions \cite{Alo19, AAMJV19, AMJV16,Col06}, the affine surface areas 
  for log-concave functions \cite{AKSW12,CFGLSW16,   CW14, CW15, CY16,LSW19}, the variation and Minkowski type problems related to log-concave  functions \cite{CF13,CK15, Kla14, San16},  and (isoperimetric) inequalities  related to log-concave functions \cite{ABM20, ABM, AFS20, Bob99,  BL76,    Lin17,  Rot13}. Other contributions include e.g., \cite{  AM09,AS19,  MR13} among others. 
 
The celebrated  Pr\'ekopa-Leindler inequality  \eqref{PLineq} for log-concave functions has a formula similar to the  famous dimensional-free Brunn-Minkowski inequality for convex bodies  (see \eqref{p-BM-2-2} for $p=1$).   Note that 
the Pr\'ekopa-Leindler inequality indeed works for more general functions, see \cite{Gar02, Lei72,Pre71,Pre73,Pre75}. 
The key ingredients in the Pr\'ekopa-Leindler inequality for log-concave functions are the total mass of a  log-concave function $f$ given by \begin{equation}\label{totalmass}
J(f)=\int_{\R^n} f(x) \, dx, 
\end{equation} where $\,dx$ denotes the Lebesgue measure in $\R^n$ and the Asplund sum $f\oplus g$ of two log-concave functions $f$ and $g$   defined by \begin{equation}\label{A-sum-6-23} f\oplus g(x)=\sup_{x=x_1+x_2} f(x_1)g(x_2), \end{equation}   which is again a log-concave function. Likewise, the volume and the Minkowski addition for convex bodies (which is equivalent to \eqref{Lp-addition} for $p=1$ for convex bodies) are the key ingredients in the Brunn-Minkowski inequality, and their combination naturally results in an elegant variational formula (i.e., \eqref{variation-convex bodies}  for $p=1$). Such a variational formula defines a crucial concept in convex geometry: the surface area measure $S_K$ for convex body $K\in \K ^n$. Notably,  the Minkowski problem \cite{Minkowski1987, Minkowski1903},
   aiming to characterize the surface area measure for convex bodies, can be solved by using the Euler-Lagrange equation based on the variational formula (i.e., \eqref{variation-convex bodies}  for $p=1$). 
Lifting to the functional setting,  the first variation of the total mass of log-concave functions with respect to the Asplund sum has been established by Colesanti and Fragal\`a  in their groundbreaking work  \cite{CF13}, and from there one can see that  things become way more complicated for log-concave functions.  The challenge partially comes from  the facts that the total mass is defined on $\R^n$ instead of compact sets ($\sphere$ for convex bodies) and the family of log-concave functions contains too many ``unfavourable" log-concave functions (for example, those log-concave functions which are not smooth enough and/or whose domains are not nice enough). 
 
The variational formula of   the total mass   for log-concave functions with respect to the Asplund sum produces a measure for log-concave functions which is similar to $S_K$ for $K\in \K^n$; such a measure was named as  the surface area measure for a log-concave function  in \cite{CF13} (which is also known as the moment measure of convex functions in \cite{CK15}).  That is, for a log-concave function $f=e^{-\psi}$ 
with $0 <J(f) < \infty$, the moment measure of $\psi$ or the surface area measure of $f$, denoted by $\mu(f, \cdot)$, was defined as the push-forward measure of $e^{-\psi}\,dx$ under $\nabla\psi$ where   $\nabla \psi$ denotes the gradient of $\psi$.  That is,  \begin{equation*}
\int_{\R^n} g(y) \, d\mu(f, y) = \int_{\R^n} g(\nabla \psi(x)) \, e^{-\psi(x)}\, dx
\end{equation*} for every Borel function $g$ such that
$g \in L^1(\mu(f, \cdot) )$ or $g$ is non-negative.   In their groundbreaking work  \cite{CK15}, Cordero-Erausquin and Klartag studied the Minkowski type problem for log-concave functions in order to characterize the moment measure.
Note that the Minkowski problem for log-concave functions in its full formulation was independently posed by Colesanti and Fragal\`a  in  \cite{CF13}, and finding solutions to this problem (in its full formulation) seems to be very important but highly intractable.   Under certain conditions on log-concave functions (in particular, the essential continuity for the negative of their logarithms), Cordero-Erausquin and Klartag  were able to solve the Minkowski problem for log-concave functions  (see \cite[Proposition 1]{CK15} for necessity and  \cite[Theorem 2]{CK15} for sufficiency and uniqueness). Santambrogio in \cite{San16} presented a solution to the Minkowski problem for log-concave functions  by using the technique of the optimal mass transportation.

 Although many concepts for log-concave functions have been studied, what brings into our attention is the missing of the ``$L_p$ addition" of log-concave functions and the corresponding variational formula. These shall be analogous to the $L_p$ addition \cite{Firey} and the related variational formula for convex bodies \cite{Lut93} which are the crucial elements in the rapidly developing $L_p$ Brunn-Minkowski theory for convex bodies. In the same manner as the Brunn-Minkowski theory for convex bodies, the $L_p$ theory for $p>1$ was developed based on the brilliant variational formula \eqref{variation-convex bodies} by Lutwak in his influential work \cite{Lut93}. Again, the variational formula \eqref{variation-convex bodies} relies on the combination of the $L_p$ addition for convex bodies  
 and the volume, which can be extended to all $0\neq p\in \R$ (see e.g., \cite{ZHY2016}). Naturally, the $L_p$ surface area measure  $h_K^{1-p} S_{K}$ for a convex body $K\in \K^n_{(o)}$  (see Section \ref{Preliminaries} for notations) can be defined from \eqref{variation-convex bodies}, and consequently the $L_p$ Minkowski problem \cite{Lut93} characterizing the $L_p$ surface area measure for convex bodies 
 can be posed (see Problem \ref{lp-min-6-26} for its precise statement). The $L_p$ Minkowski problem is a milestone   in convex geometry and receives enormous attention in many areas of mathematics.   Contributions to the $L_p$ Minkowski problem include   \cite{BLYZ13, chen,CW06,HLW15,HLYZ05,JLW15,JLZ16, LW13, Lut93, LO95, LYZ04,   Uma03,Zhu15,Zhu15jfa,Zhu17} among others.  
 
Our main goals  in this paper are to establish  a basic framework of the functional ``lifting" of the $L_p$ Brunn-Minkowski theory for convex bodies, and thence extend the scheme for log-concave functions to  an $L_p$ setting. Based on the $L_p$ Asplund sum of log-concave functions which naturally generalizes the Asplund sum, for $p>1$, we establish a variational formula related to the total mass, define the $L_p$ surface area measure for log-concave functions,  and  study the $L_p$ Minkowski problem which characterizes the $L_p$ surface area measure for log-concave functions. It is our hope that these contributions provide some useful tools in the development of  geometry of log-concave functions.
 
More specifically, Section \ref{p-sum of log-concave function} dedicates to study the properties for the $L_p$ Asplund sum of log-concave functions.  Let $f=e^{-\varphi}\in \A_0$ and $g=e^{-\psi}\in \A_0$  (see Sections \ref{Preliminaries} and \ref{p-sum of log-concave function} for notations). For $p>1$ and $\alpha, \beta>0$,  the $L_p$ Asplund sum  $\alpha\cdot_p f\,\oplus_p\,\beta\cdot_p g$ may be formulated  by 
		\begin{equation} \label{p-addition-6-23}  \alpha\cdot_p f\,\oplus_p\,\beta\cdot_p g:=e^{-\phi^*} \ \ \ \mathrm{with} \ \ \  \phi:=\big[\alpha(\varphi^*)^p+\beta(\psi^*)^p\big]^{\frac{1}{p}},\end{equation}  where $\varphi^*$ denotes the Fenchel conjugate of $\varphi$  given in \eqref{def-dual-1}. This definition is meaningful as it can be seen in Proposition \ref{close-Lp-A-sum} that $\alpha\cdot_p f\,\oplus_p\,\beta\cdot_p g\in \A_0$ for $p> 1$. We establish the following Pr\'ekopa-Leindler type  inequality in Theorem \ref{prekopainequality}:  {\em for $f, g\in\A_0$, $\lambda\in (0,1)$ and $p>1$,  it holds that $$
		J\big((1-\lambda)\cdot_p f\oplus_{p} \lambda\cdot_p g\big)\geq J(f)^{1-\lambda}J(g)^{\lambda}  $$
		with  equality if and only if $f=g$ on $\R^n$.} We also prove the log-concavity of $J(f\oplus _p t\cdot_p g)$ on $t\in (0, \infty)$ and $J\big((1-t)\cdot_p f\oplus_p t\cdot_p g\big)$ on $t\in (0, 1)$ in Corollary \ref{concave-PL-1}. 

In Section  \ref{sectoion-4-6-23},  we define $\delta J_{p}(f, g)$ for $p>1$,  the first variation of the total mass at $f\in \A_0$ along $g\in \A_0$ with respect to the $L_{p}$  Asplund sum, by \[\delta J_{p}(f,g):=\underset{t\rightarrow0^+}{\lim}\frac{J(f\oplus_{p} t\cdot_p g)-J(f)}{t},\] whenever the  limit exists. To find an explicit formula to describe $\delta J_{p}(f,g)$ for all $f, g\in \A_0$ seems to be intractable and even impractical. However, when $f=g\in \A_0$  such that $J(f)>0$, we are able to prove in Lemma \ref{specialcaseoffg} that, for $p>1,$   $$
\delta J_{p}(f,f)= \frac{n}{p} \int_{\R^n} f \, dx   + \frac{1}{p} \int_{\R^n} f \log f \, dx,
$$  which involves the entropy of $f$ (see \eqref{entropu-p>1} for more details).  Our main result in this section is a Minkowski type inequality for $\delta J_{p}(f, g)$ established in  Theorem \ref{minkowskiinequality}, namely, {\em 
		for  $f \in \A_0$ and $g \in\A_0$ such that $J(f)>0$ and $J(g)>0$,  one has,  for $p> 1,$
		$$
		\delta J_{p}(f,g)\geq \delta J_{p}(f,f)+J(f)\log\Big(\frac{J(g)}{J(f)}\Big)
		$$
		with equality if and only if  $f=g$.} This Minkowski type inequality is applied to obtain a unique determination of log-concave functions in Corollary \ref{cormink-uniq}.    	
		
Although, in general,  an explicit formula for $\delta J_{p}(f,g)$ is not easy to get,  under certain conditions on $f$ and $g$ (which differs from the one for $f=g$ in Section \ref{sectoion-4-6-23}), such a formula can be obtained. This result consists of our main contribution in Section \ref{section-varition-516} and  is proved in Theorem \ref{variationformula}. Roughly speaking, it asserts that  {\em   for $p>1$,  \begin{align}\label{p-derivative-6-23} 
		\delta J_{p}(f,g) =\frac{1}{p}\int_{\mathbb{R}^n}(\psi^*(y))^{p}(\varphi^*(y))^{1-p}\,d\mu(f, y) \end{align} 
		 holds for $f=e^{-\varphi}$ and $g=e^{-\psi}$ smooth enough such that $(\varphi^*)^p-c (\psi^*)^p$ is a convex function for some constant $c>0$, and  the function  $\frac{1}{\alpha} (\varphi^*)^{\alpha}$  (understood as $\log \varphi^*$ when $\alpha=0$) is convex  for some $\alpha\in [0, 1)$.}  Please see more precise statements in Theorem  \ref{variationformula} and Corollary \ref{verify-condition-1}. The proof of Theorem  \ref{variationformula} is very technical and involves a lot of rather complicated analysis on the (first and second order) differentiability  and the limit of the partial derivatives of the function $-\log (f\oplus_{p} t\cdot_p g)$.  We would like to mention that,  even  in the case $p=1$ where $\phi$ defined in \eqref{p-addition-6-23}  is linear, such analysis (for the differentiability and related limits) already exhibits its complexity as one can see in \cite{CF13}. The nonlinearity of $\phi$ defined in \eqref{p-addition-6-23} for $p>1$ does bring extra difficulty and makes our analysis even more complicated. In particular, additional conditions on $\varphi$ are needed, such as, \eqref{compatible-1} or those given in  Corollary \ref{verify-condition-1}.

Our last contribution is the study of the $L_p$ Minkowski problem for log-concave functions, which will be presented in Section  \ref{functional p Minkowski problem}.  In fact, formula \eqref{p-derivative-6-23} suggests a natural way to define the $L_p$ surface area measure for the log-concave function $f=e^{-\varphi}$, denoted as  $\mu_p(f, \cdot)$ and defined on  $\Omega_{\varphi^*}=\{y\in \R^n:  0<\varphi^*(y)<+\infty\}$ (this set is always assumed to be nonempty).  Under certain conditions on a given log-concave function $f=e^{-\varphi}$,   the measure $\mu_p(f, \cdot)$ for $p\in \R$ is given by  $$  \int_{\Omega_{\varphi^*}}g(y)\, d\mu_p (f, y)=\int_{\{x\in \R^n:\   \nabla \varphi(x)\in \Omega_{\varphi^*}\}}g(\nabla \varphi(x))(\varphi^*(\nabla \varphi(x)))^{1-p} e^{-\varphi(x)} \,dx $$ for every Borel function $g$ such that $g \in L ^ 1 (\mu_p (f,\cdot))$  or $g$ is non-negative.  We pose the following  $L_p$ Minkowski problem for log-concave functions: {\em for $p\in \R$,  find the necessary and/or sufficient conditions on a  finite nonzero Borel measure $\nu$ defined on  $\mathbb{R}^n$  so that   $\nu=\tau \mu_{p}(f,\cdot)$    holds for some log-concave function $f$ and $\tau\in\R$.}  We provide a solution to this problem under the following mild conditions on $\nu$: $\nu$ is  not supported in a lower-dimensional subspace of $\R^n$, $\nu(M_{\nu}\setminus L)>0$ holds for any bounded convex set $L\subset\R^n$ where  $M_{\nu}$ is the interior of the convex hull of the support of $\nu$, and $  \int_{\R^n} |x|^p\, d\nu(x)<\infty.$ Note that these conditions for $\nu $ are all natural; see Section \ref{functional p Minkowski problem} for more details. Our solution to the $L_p$ Minkowski problem for log-concave functions is proved in Theorem \ref{solution-lp-mink-6-2}.  That is, roughly speaking, {\em if $\nu$ is an even measure satistying the above conditions,  for  $p>1$,  there exists an even log-concave function  $f=e^{-\varphi}$, such that,   \begin{eqnarray*}
 \nu =\frac{\int_{\Omega_{\varphi^*}} \,d\nu(y)}  {\int_{\Omega_{\varphi^*}} \,d \mu_{p}(f,y) } \mu_{p}(f, \cdot)\ \ \ \mathrm{on} \ \ \Omega_{\varphi^*}.
		\end{eqnarray*}  } 
				
Finally, we would like to comment that if $\nu$ admits a density function with respect to the Lebesgue measure, say $\,d\nu(y)=h(y)\,dy$, and $f=e^{-\varphi}$ with the convex function $\varphi$ smooth enough, then  finding a solution to the $L_p$ Minkowski problem for log-concave functions requires to search for a (smooth enough) convex function $\varphi$  satisfying the following Monge-Amp\`{e}re equation:  
\begin{align*} h(y) =  \tau  \varphi^*(y)^{1-p} e^{\varphi^*(y)-\langle y, \nabla\varphi^*{y}\rangle} \det(\nabla^2\varphi^*(y)),    \ \ \ \mathrm{for} \ y\in \Omega_{\varphi^*},  \end{align*}  where $\tau\in\R$ is a constant  and $\det(\nabla^2\varphi^*(y))$ denotes the determinant of the Hessian matrix of $\varphi^*$ at $y\in \R^n$.

\section{Preliminaries and Notations}\label{Preliminaries} \setcounter{equation}{0}

This section provides preliminaries and notations required for (log-concave)  functions and convex bodies. More details can be found in \cite{BV10, Roc70,Sch14}. The paper \cite{Col17} by Colesanti is also an excellent source to learn various important topics related to log-concave functions.

Let  $\mathbb{N}$ be the set of natural numbers and $n\in \mathbb{N}$ such that $n\geq 1$. In the $n$-dimensional Euclidean space $\R^n$,  $\langle x ,  y \rangle$ stands for the standard inner product of $x,y\in\R^n$ and $|x|$ for the Euclidean norm  of $x\in \R^n$. The origin of $\R^n$ is denoted by $o$.  A set $E\subset\R^n$ is said to be origin-symmetric if $-x\in E$ for any $x\in E$.  Let $B _r = \{ x \in \R ^n \, :\, |x| \leq r \}$ be  the origin-symmetric ball with radius $r$. By  $\sphere$ we mean the unit sphere, i.e., the boundary of $\ball$. We also use $V(E), \mathrm{int}(E), \overline{E}$ and $\partial E$ to denote the volume, interior, closure and boundary of $E\subset \R^n$, respectively.  By $\mathcal{H}^k$, we mean the $k$-dimensional Hausdorff measure.

A set $K\subset \R^n$ is said to be a convex body if $K$ is a compact convex set with nonempty interior. 	The family of convex bodies is denoted by $\K ^n$. With $\K_{(o)}^n$, we mean the collection of convex bodies containing $o$ in their interiors. We say $K\in \K_{(o)}^n$ is a dilation  of $L\in \K_{(o)}^n$ if there exists a $\lambda>0$ such that $K=\lambda L$.  
 For any $K \in \K^n$, the {\it support function} of $K$, $h _K: \R^n\rightarrow \R$,  which is a sublinear function on $\R^n$ and can be used to determine $K$  uniquely,   takes the following form: 
\begin{equation}\label{support-1}  
h _K (x)= \sup_{ y \in K} \langle x,  y \rangle  \ \ \ \mathrm{for }\  x \in \R ^n.
\end{equation}

Let $\alpha, \beta>0$ be two constants. For $K, L\in \K_{(o)}^n$, define $\alpha \cdot_p K+_p\beta\cdot_p L$ for $p\geq 1$ (see e.g., \cite{Firey, Lut93, Sch14})  to be the convex body whose support function is given by  \begin{equation}\label{Lp-addition} h_{\alpha \cdot_p K+_p\beta\cdot_p L}=\big(\alpha h_K^p+\beta h_L^p\big)^{\frac{1}{p}}. \end{equation} Of course, $\alpha \cdot_p K+_p\beta\cdot_p L$  in  \eqref{Lp-addition}  reduces to the Minkowski addition of convex bodies when $p=1$.  In \cite{Lut93}, Lutwak proved the following  $L_p$ Brunn-Minkowski inequality for $p>1$:  \begin{eqnarray}\label{p-BM}
V(K+_pL)^{\frac{p}{n}}\geq V(K)^{\frac{p}{n}}+V( L)^{\frac{p}{n}}
\end{eqnarray}
with equality if and only if $K$ and $L$ are the dilation of each other.  Moreover, Lutwak in \cite{Lut93} also established the following remark variational formula: for $K, L\in \K_{(o)}^n$ and $p> 1$, one has \begin{equation}\label{variation-convex bodies} 
\frac{p}{n}\cdot \frac{d}{dt}V(K+_p t\cdot _p L)\Big|_{t=0^+}=\frac{1}{n}\cdot \int_{\sphere}h_L^p(u)h_K^{1-p}(u)\, d S_{K}(u):=V_p(K,L).	
\end{equation}  Here $S_K$ is the surface area measure of $K$ on $\sphere$, that is,  $S_K(\eta)=\mathcal{H}^{n-1}(\nu_K^{-1}(\eta))$ for any Borel set $\eta\subset \sphere$,  where $\nu_K^{-1}$ is the reverse Gauss map of $K$. Note that both  \eqref{p-BM} and \eqref{variation-convex bodies}  works for $p=1$ as well, and inequality  \eqref{p-BM} becomes the classical Brunn-Minkowski inequality (in this case, equality holds if and only if $K$ and $L$ agree up to a translation and a dilation).  We refer \cite{Sch14} and references therein for more details.

   This elegant formula \eqref{variation-convex bodies}  introduces two important geometric objects:  the $L_p$ mixed volume $V_p(K, L)$  for $K, L\in \K_{(o)}^n$  and  the $L_p$ surface area measure of $K\in  \K_{(o)}^n$  given by $\, d S_p(K, \cdot)=h_K^{1-p}\, d S_{K}.$  Associated to the $L_p$ mixed volume is the following $L_p$ Minkowski inequality  for $p> 1$: 
\begin{eqnarray*}\label{p-M}
	V_{p}(K, L)\geq V(K)^{\frac{n-p}{n}}V(L)^{\frac{p}{n}},
\end{eqnarray*} 
 with equality if and only if $K$ and $L$ are the dilation of each other \cite{Lut93}.  Related to the $L_p$ surface area measure is the following well-known  $L_p$ Minkowski problem \cite{Lut93}. 
\begin{problem}[The $L_p$ Minkowski problem for convex bodies]\label{lp-min-6-26} Let $p\in \R$. What are the   necessary and sufficient conditions on a nonzero finite Borel measure $\mu$  on  $S^{n-1}$ such that  there exists a convex body  $K
	\subset \mathbb{R}^n$  satisfying $S_p(K, \cdot)=\mu$?\end{problem} 

Throughout the paper, let $\varphi : \R^n\to\R\cup\{+\infty\}$ be a convex function, that is,  $$\varphi(\lambda x+(1-\lambda)y)\leq \lambda \varphi(x)+(1-\lambda) \varphi(y)$$ holds  for any $\lambda\in (0, 1)$ and $x, y\in \mathrm{dom} (\varphi)$, where $
\mathrm{dom} (\varphi)=\{x\in\mathbb{R}^n\,:\, \varphi(x)<+\infty\}$ denotes the domain of $\varphi.$ Denote by $\C$ the set of all convex functions from $\R^n$  to $\R\cup\{+\infty\}$.  Clearly,  $\mathrm{dom} (\varphi)$ is a convex set for  any   $\varphi \in \C$. A convex function $\varphi\in \C$ is said to be  {\it proper} if $\mathrm{dom} (\varphi) \neq \emptyset$. By $\C^1(E)$ we mean the set of all functions $\varphi\in \C$ such that $\varphi$ is continuously differentiable on $E\subseteq \mathrm{int} (\dom (\varphi))$. Similarly, by $\varphi\in \C^ 2_+(E)$ we mean that $\varphi\in \C$ is twice continuously differentiable and its Hessian matrix is positive definite on $E\subseteq \mathrm{int} (\dom (\varphi))$. 

Let $\varphi: \R^n\to\R\cup\{+\infty\}$ be a function  (not necessary a convex function) such that $\varphi(x)<\infty$ for some $x\in \R^n$. The {\it Fenchel conjugate}  of $\varphi$ is defined by:
\begin{equation}
\varphi^*(y)=\sup_{x\in\R^n}\big\{ \langle x , y \rangle -\varphi(x)\big\} \ \ \ \mathrm{for \ all} \ y \in \R ^n.\label{def-dual-1}
\end{equation}  
For $\alpha>0$, \eqref{def-dual-1} yields that, for all $y\in \R^n$, \begin{equation} \label{const-mul-dual} (\alpha \varphi)^*(y)=\sup_{x\in\R^n}\big\{ \langle x , y \rangle -\alpha \varphi(x)\big\}=\alpha \sup_{x\in\R^n}\big\{ \langle x , y/\alpha \rangle -\varphi(x)\big\}=\alpha \varphi^*(y/\alpha). \end{equation}  Clearly,  $\varphi^{**}\leq \varphi$,  $\varphi^*\in \C$, and $\varphi^*$ is always lower semi-continuous. It can also be checked that if $\varphi\leq \psi$, then $\varphi^*\geq \psi^*$.  Moreover, if $\varphi\in \C$ is proper,  
the Fenchel-Moreau theorem  (or Fenchel biconjugation theorem) \cite{BV10, Roc70} yields that  $(\varphi^*)^*=\varphi$ if and only if $\varphi$  is lower semi-continuous.  It is obvious by \eqref{def-dual-1}  that  $\varphi ^*(0) = - \inf_{x\in \R^n} \varphi(x)$ for $\varphi\in \C$, and hence $\varphi^*$ is proper if $\inf _{x\in \R^n} \varphi(x) > - \infty$. Furthermore, if $\varphi\in \C$ is proper,  then $\varphi^*(y)>-\infty$ for all $y\in\R^n$.  We also let $e^{-\infty}=0$ by default (unless otherwise stated). 

Denote by $\nabla \varphi(x)$ the gradient of $\varphi$ at $x\in \dom(\varphi)$ if $\varphi$ is differentiable at $x$.  A convex function $\varphi\in \C$  may not be differentiable at each point in its domain, however it is always continuous on $\R^n\setminus \partial (\dom (\varphi))$ and is  differentiable  almost everywhere in $\mathrm{int} (\dom (\varphi))$.  It can be checked that if $\varphi$ is differentiable at $x$, then the supremum in  \eqref{def-dual-1} achieves its maximum if $y=\nabla \varphi(x)$. Hence, \begin{equation}
\varphi^*(\nabla \varphi(x))= \langle x , \nabla \varphi(x) \rangle -\varphi(x). \label{def-dual-2}
\end{equation}  This relation will be used often in later context. Denote by $\nabla^2 \varphi$ the Hessian matrix of $\varphi$.

Throughout the paper,  we shall consider two classes of functions $\L$ and $\A$ given by  
\begin{eqnarray*} 
	\L &=& \Big\{ \varphi\in \C \ \big| \ \varphi \ \mathrm{is\ proper \ such\ that}\  \lim_{|x|\rightarrow+\infty}\varphi(x)=+\infty \Big\},
	\\ 
	\A& =&\Big\{f: \R^n\to\R \ \big| \ f=e^{-\varphi}, \ \ \varphi\in\L \Big\}.
\end{eqnarray*} It has been proved in \cite[Lemma 2.5]{CF13} that if $\varphi \in \L$, then for all $x\in\R^n$, 
\begin{equation}\label{claimab}
\varphi(x)\geq a|x|+b,
\end{equation} where  $a>0$ and $b$ are two constants, and
$\varphi ^*$ is proper and satisfies $\varphi ^*(y) > - \infty$  for all $y \in \R^n$.

A function $f: \R^n\rightarrow \R$ is said to be log-concave if $f=e^{-\varphi}$ for some convex function $\varphi\in \C$.   We are interested in log-concave functions such that $0<J(f)<\infty$, where $J(f)$ is the total mass of $f$ defined in \eqref{totalmass}. It can be easily checked (see e.g.~\cite[p.~3835]{CK15}) that, for $f=e^{-\varphi}$ a log-concave function,   $0<J(f)<\infty$  is equivalent to the facts that $\mathrm{dom}(\varphi)$ is not contained in a hyperplane and  $\lim_{|x|\rightarrow+\infty}\varphi(x)=+\infty$ (in particular, $f\in \A$). It follows from   \eqref{def-dual-1} and  \eqref{claimab} that, if $f=e^{-\varphi}$ with $\varphi\in \C$ is non-degenerate (i.e., not vanishing on a set with positive Lebesgue measure), then  $J(f)<\infty$ if and only if $o\in \mathrm{int}(\dom(\varphi^*))$.

The surface area measure for a log-concave function $f$   (or the {\it moment measure} of $-\log f$ as in \cite{CK15})  plays important roles in applications. The following definition follows from \cite[Definition 1]{CK15} (see also \cite[Definition 4.1]{CF13}).  
\begin{definition} \label{def-moment-measure}  Let $f=e^{-\varphi}$  
 be  a  log-concave function. The surface area measure of $f$ (or the moment measure of $\varphi$), denoted by $\mu(f, \cdot)$, is the Borel measure on $\R^n$ such that $\mu (f,\cdot)$ is the push-forward measure of $e^{-\varphi(x)}\,dx$ under $\nabla \varphi$, that is, 
	\begin{equation}\label{moment-form-1} \int_{\R^n}g(y)\, d\mu(f, y)=\int_{\R^n}g(\nabla \varphi(x))e^{-\varphi(x)} \,dx, \end{equation}  for every Borel function $g$ such that $g \in L ^ 1 (\mu (f,\cdot))$  or $g$ is non-negative. \end{definition}		
It has been proved that $\varphi ^* \in L ^ 1 (\mu (f,\cdot))$, if $f = e ^ {-\varphi} \in \A$ with $\varphi^*\geq 0$ in \cite[Lemma 4.12]{CF13}  or  
if $0<J(e^{-\varphi})<\infty$ in \cite[Proposition 7]{CK15}, i.e.,   \begin{equation} -\infty<\int_{\R^n} \varphi^* (y)  \, d\mu(f, y)=\int_{\R^n} \varphi^* (\nabla \varphi(x) ) \, e^{-\varphi(x)} \, dx < + \infty.\label{finitezza-1} \end{equation}   Let $(\varphi \alpha)(x) =\alpha \varphi(\frac{x}{\alpha})$ for $\alpha>0$ and $\varphi 0 =\mathbf{I}_{\{0\}}$ 
where $\mathbf{I}_E$ is the indicatrix function of $E$ (i.e., $\mathbf{I}_E(x)=0$ if $x\in E$ and $\mathbf{I}_E=+\infty$ if $x\notin E$). By \eqref{const-mul-dual}, one sees  $(\alpha \varphi)^*=  \varphi^*\alpha $ for $\alpha>0$.   
It can be easily checked from \eqref{A-sum-6-23} that, for  real $\alpha, \beta>0$ and two log-concave functions $f=e^{-\varphi}$ and $g=e^{-\psi}$,  
the  Asplund sum of $\alpha \cdot f\oplus \beta\cdot g$ may  also be formulated by \begin{equation}\label{A-sum-1}
\alpha\cdot f \oplus \beta\cdot g= e^{-\varphi \alpha \Box \psi  \beta},
\end{equation}  
where $\varphi \Box \psi$ is the {\it infimal convolution}  of $\varphi$ and $\psi$ defined by $$
\varphi \Box \psi (x) = \inf _{y\in \R^n} \big \{ \varphi (x-y) + \psi (y) \big \}, \ \ \mathrm{for\ any}\ x\in \R^n.$$ It can be easily checked that 
\begin{equation}\label{inf-conv-1}
(\varphi \alpha \Box \psi  \beta)^* =\alpha \varphi^*+\beta \psi^*.
\end{equation}   
The   {\it Pr\'ekopa-Leindler inequality}  implies  that (see, e.g., \cite[Remark 3.3]{CF13}), for any integrable log-concave functions  $f, g$  and $0<\lambda<1$, one has  
\begin{eqnarray}\label{PLineq}
J\big((1-\lambda)\cdot f\oplus \lambda\cdot g\big) \geq J(f)^{1-\lambda} J(g)^{\lambda},
\end{eqnarray} with equality  if and only if there exists   $x_0\in\R^n$ such that  $f(x)=g(x-x_0)$  (see, e.g., \cite{Dub77,Gar02}). 

\section{The $L_{p}$  Asplund sum of log-concave functions  
}\label{p-sum of log-concave function} \setcounter{equation}{0}
This section dedicates to the study of properties of the $L_{p}$   Asplund sum of log-concave  functions. In particular,   a Pr\'ekopa-Leindler  type inequality related to the $L_{p}$   Asplund sum will be proved.  We  focus on $\L_0\subset \L$ and $\A_0\subset \A$, where  $\A_0=\big\{ e^{-\varphi}:   \varphi\in\L_0 \big\}$ with  \begin{align*}
\L_0&=\Big\{ \varphi \in\L:  \ \varphi \  \text{is\ non-negative and  lower semi-continuous such that}\ \varphi(o)=0  \Big\}.
\end{align*}  Clearly, if $\varphi\in \L_0$, then $\varphi$ has its minimum attained at the origin $o$.   	
From \eqref{def-dual-1}, one sees that if $\varphi(o)=0$, then  \begin{equation}
\varphi^*(y)=\sup_{x\in\R^n}\big\{ \langle x , y \rangle -\varphi(x)\big\}\geq  \langle o , y \rangle -\varphi(o)=0,   \ \ \mathrm{for\ all\ } y \in \R ^n. \label{non-negative-6-26}\end{equation} 	Moreover, $\varphi^*(o)=0$ as  \begin{equation}\label{dual-at-origin} 
\varphi^*(o)=\sup_{x\in\R^n} -\varphi(x) =-\inf_{x\in \R^n} \varphi(x)=0. \end{equation}  

The definition for the $L_{p}$  Asplund sum of log-concave functions is given below, with concentration on $p>1$.

\begin{definition}\label{def-lp-A-sum} 
	Let $f=e^{-\varphi}\in \A_0$ and $g=e^{-\psi}\in\A_0$. For $p>1$, define  $ f\oplus_{p} g$, the $L_p$ Asplund sum of  $f$ and $g$, by  $ f\oplus_{p}  g=e^{-\varphi  \square_{p}\psi}$, where   	
	\begin{equation}\label{prodottoL-no constants} 
	\varphi\square_{p} \psi=\left[
	\big((\varphi^*)^p+(\psi^*)^p\big)^{\frac{1}{p}}\right]^*.
	\end{equation}
\end{definition} 
For convenience, if $\alpha>0$, let 
\[(\varphi \cdot_p \alpha) (x)=( \varphi  \alpha^{\frac{1}{p}})(x)=\alpha^{\frac{1}{p}} \varphi (\alpha^{-\frac{1}{p}} x) \ \ \mathrm{for\ any}\ x\in \R^n.\] For $\alpha,\beta> 0$, $p\geq1$,  $f=e^{-\varphi}\in \A_0$ and $g=e^{-\psi}\in\A_0$, let  $\alpha\cdot_p f\oplus_{p}\beta\cdot_p g=e^{-\varphi\cdot_p\alpha \square_{p}\psi\cdot_p\beta},$ 	where  	
\begin{equation}\label{prodottoL} 
\varphi\cdot_p\alpha \square_{p}\psi\cdot_p\beta=\left[
\big(\alpha(\varphi^*)^p+\beta(\psi^*)^p\big)^{\frac{1}{p}}\right]^*.
\end{equation}	
By \eqref{inf-conv-1} and \eqref{prodottoL},    $\alpha\cdot_p f\oplus_{p}\beta\cdot_p g$  for $p=1$ reduces to the Asplund sum $\alpha\cdot f\oplus\beta\cdot g$.  		

The following result asserts that   the $L_p$ Asplund sum of log-concave functions is closed in $\A_0.$
\begin{proposition}\label{close-Lp-A-sum}
	Let $f=e^{-\varphi}\in \A_0$ and $g=e^{-\psi}\in\A_0$.  For $p>1$ and $\alpha, \beta>0$, let \begin{equation}\label{relation-p-sq-1} \phi=\big[\alpha(\varphi^*)^p+\beta(\psi^*)^p\big]^{\frac{1}{p}}.\end{equation}   Then  $\phi= \big(\varphi\cdot_p\alpha \square_{p}\psi\cdot_p\beta\big)^*$
	and  $\alpha\cdot_p f\,\oplus_p\,\beta\cdot_p g=e^{-\phi^*}\in\A_0.$ 
\end{proposition}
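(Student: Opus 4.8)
The plan is to deduce the statement from the Fenchel--Moreau biconjugation theorem together with a few elementary stability facts about the classes $\L$ and $\L_0$. Write $\phi=\big[\alpha(\varphi^*)^p+\beta(\psi^*)^p\big]^{1/p}$ as in \eqref{relation-p-sq-1}. By \eqref{prodottoL}, the function $\varphi\cdot_p\alpha \square_{p}\psi\cdot_p\beta$ is, by definition, precisely $\phi^*$; hence the identity $\phi=(\varphi\cdot_p\alpha \square_{p}\psi\cdot_p\beta)^*$ is the same as $\phi=\phi^{**}$, and the membership $\alpha\cdot_p f\oplus_p\beta\cdot_p g=e^{-\varphi\cdot_p\alpha \square_{p}\psi\cdot_p\beta}=e^{-\phi^*}\in\A_0$ is the same as $\phi^*\in\L_0$. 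So there are really two things to check: (i) $\phi$ is a proper, lower semi-continuous, convex function; (ii) $\phi^*\in\L_0$.

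For (i), I would first record that since $\varphi,\psi\in\L_0$, relations \eqref{non-negative-6-26} and \eqref{dual-at-origin} give $\varphi^*,\psi^*\ge 0$ with $\varphi^*(o)=\psi^*(o)=0$, so $\phi\ge 0$ and $\phi(o)=0$; in particular $\phi$ is proper. Next, $\varphi^*$ and $\psi^*$ are convex and lower semi-continuous, as Fenchel conjugates always are. Writing $\phi=G\circ(\varphi^*,\psi^*)$ with $G(s,t)=(\alpha s^p+\beta t^p)^{1/p}$ on $[0,+\infty]^2$, I would note that $G$ equals the $\ell^p$-norm of $(\alpha^{1/p}s,\beta^{1/p}t)$, hence is convex, and that it is non-decreasing in each variable; composing a convex, coordinatewise non-decreasing function with the convex functions $\varphi^*,\psi^*$ gives that $\phi$ is convex, and composing the continuous non-decreasing $G$ with the (coordinatewise) lower semi-continuous map $(\varphi^*,\psi^*)$ gives that $\phi$ is lower semi-continuous. (Here one only has to be a little careful to treat the value $+\infty$ consistently, which is harmless because $\varphi^*,\psi^*\ge 0$.) Since $\phi$ is now a proper, lower semi-continuous, convex function, the Fenchel--Moreau theorem yields $\phi^{**}=\phi$, which by \eqref{prodottoL} is exactly $\phi=(\varphi\cdot_p\alpha \square_{p}\psi\cdot_p\beta)^*$.

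For (ii), being a Fenchel conjugate, $\phi^*$ is automatically convex and lower semi-continuous; and from $\phi\ge 0$ with $\phi(o)=0$ one reads off $\phi^*(y)\ge\langle o,y\rangle-\phi(o)=0$ for every $y$, and $\phi^*(o)=-\inf_{\R^n}\phi=0$, so $\phi^*$ is proper, non-negative, and vanishes at $o$. The only substantive point is the coercivity $\lim_{|x|\to\infty}\phi^*(x)=+\infty$. For this I would invoke the linear minorant \eqref{claimab}: applied to $\varphi\in\L$ and to $\psi\in\L$, it shows that $\varphi^*$ and $\psi^*$ are finite (indeed bounded) on some ball $B_r$ centered at $o$; since $\dom(\phi)=\dom(\varphi^*)\cap\dom(\psi^*)$, this gives $B_r\subseteq\dom(\phi)$, i.e.\ $o\in\mathrm{int}(\dom(\phi))$. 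A convex function finite on an open set is bounded above on compact subsets of it, so $\phi\le C$ on $\overline{B_{r/2}}$ for some constant $C$, and therefore
\[
\phi^*(x)\ \ge\ \sup_{|y|\le r/2}\big(\langle x,y\rangle-\phi(y)\big)\ \ge\ \tfrac{r}{2}\,|x|-C,
\]
which tends to $+\infty$ as $|x|\to\infty$. Hence $\phi^*\in\L$, and combined with the properties above, $\phi^*\in\L_0$; thus $e^{-\phi^*}\in\A_0$, which by \eqref{prodottoL} equals $\alpha\cdot_p f\oplus_p\beta\cdot_p g$.

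The step I expect to demand the most care is the convexity of $\phi$ in (i): it relies on the composition rule for a convex, coordinatewise non-decreasing outer function applied to convex functions, and on handling the possible value $+\infty$ of $\varphi^*$ and $\psi^*$. The coercivity of $\phi^*$ in (ii) is the only other point that is not pure bookkeeping, and it reduces, through the linear lower bounds \eqref{claimab} for functions in $\L$, to the observation that $o$ lies in the interior of $\dom(\phi)$; everything else follows directly from the definitions set up in Section \ref{p-sum of log-concave function}.
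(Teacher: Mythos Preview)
Your proof is correct and follows essentially the same scheme as the paper's: verify that $\phi$ is proper, convex, and lower semi-continuous so that Fenchel--Moreau gives $\phi=\phi^{**}$, and then check directly that $\phi^*\in\L_0$. The only cosmetic differences are that the paper proves the convexity of $\phi$ by a direct application of the Minkowski inequality for $\ell^p$-norms (rather than your composition-with-a-monotone-convex-outer-function argument, which amounts to the same thing), and for the coercivity of $\phi^*$ the paper bounds $\phi$ explicitly from above by $(\alpha+\beta)^{1/p}(c|x|+d)^*$ via \eqref{claimab} and then takes conjugates to get $\phi^*(y)\ge c|y|+d(\alpha+\beta)^{1/p}$, whereas you argue more abstractly that $o\in\mathrm{int}(\dom(\phi))$ forces $\phi^*(x)\ge\tfrac{r}{2}|x|-C$. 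Both routes are valid and equally short.
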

\begin{proof}  Let $f=e^{-\varphi}\in \A_0$ and $g=e^{-\psi}\in\A_0$ which imply  $\varphi, \psi \in\L_0$. By \eqref{non-negative-6-26} and \eqref{dual-at-origin},  $\varphi^*\geq 0$ and $\psi^*\geq 0$ with $\varphi^*(o)=\psi^*(o)=0$.  Hence,  $ \varphi\cdot_p\alpha \square_{p}\psi\cdot_p\beta$ for $p> 1$ given in  \eqref{prodottoL} is well-defined and $\varphi\cdot_p\alpha \square_{p}\psi\cdot_p\beta\in \C$ is lower semi-continuous. Moreover, $\phi$ defined by \eqref{relation-p-sq-1}  is non-negative and  $\phi(o)=0$ is the minimum of $\phi$. This further yields that   $ (\varphi\cdot_p\alpha \square_{p}\psi\cdot_p\beta)(o)= 0$ by \eqref{dual-at-origin}, and hence  is proper.  Note that $\varphi^*, \psi^*\in \C$ are lower semi-continuous. Consequently, for $p>1$, $\alpha, \beta>0$, $\phi$ is lower semi-continuous. Moreover,   $\phi\in \C.$ To this end,  for all $\lambda\in (0, 1)$ and $x, y\in \dom(\phi)$, the Minkowski inequality for norms  yields that   \begin{eqnarray*} \phi(\lambda x+(1-\lambda)y)\!\! 
		&=&\! \! \Big[\alpha\big(\varphi^*(\lambda x+(1-\lambda)y) \big)^p+\beta\big(\psi^*(\lambda x+(1-\lambda)y)\big)^p \Big]^{\frac{1}{p}}\\ \!\!&\leq&\!\! \Big[\alpha\big(\lambda \varphi^*(x)+(1-\lambda)\varphi^*(y) \big)^p+\beta\big(\lambda \psi^*(x)+(1-\lambda)\psi^*(y) \big)^p \Big]^{\frac{1}{p}} \\ \!\! &\leq& \!\! \lambda  \big[\alpha(\varphi^*)^p+\beta(\psi^*)^p\big]^{\frac{1}{p}}(x) + (1-\lambda) \big[\alpha(\varphi^*)^p+\beta(\psi^*)^p\big]^{\frac{1}{p}}(y)\\ \!\! &=&\!\! \lambda  \phi(x) + (1-\lambda) \phi(y).\end{eqnarray*}  According to \eqref{prodottoL} and  the Fenchel-Moreau theorem \cite{BV10, Roc70}, one sees that 
	\begin{equation} \label{biconjugate-1}  \big(\varphi\cdot_p \alpha\square_{p}\psi\cdot_p \beta\big)^*=\big(\phi^*\big)^*=\phi \ \  \ \mathrm{and}\  \ \ \alpha\cdot_p f\,\oplus_p\,\beta\cdot_p g=e^{-\phi^*}.
	\end{equation}

	To claim that $\alpha\cdot_p f\,\oplus_p\,\beta\cdot_p g\in\A_0$  for $p>1$, it is enough to show that $\phi^*\in\L_0$.  To this end, we only need  to claim that   $\lim_{|x|\rightarrow+\infty} \phi^*(x)=+\infty$ as  we already showed that $\phi^*$ is proper and lower semi-continuous.  Inequality  \eqref{claimab} implies that  for all $x\in \R^n$,  $\varphi(x)\geq a|x|+b$  and $\psi(x)\geq a'|x|+b'$  for some $a, a'>0$ and $b,b'\in \R$.
	Let $c=\min\{a,a'\}>0$ and $d=\min\{b, b'\}$. Hence $\varphi(x)\geq c|x|+d$ and  $\psi(x)\geq c|x|+d$ for all $x\in \R^n$.  This further yields that $\varphi ^*\leq (c|x|+d)^*$  and $\psi^*\leq (c|x|+d)^*$. Accordingly, due to $p>1$ and $\alpha, \beta>0$, one has 
	$$\phi=\big[\alpha(\varphi^*)^p+\beta(\psi^*)^p\big]^{\frac{1}{p}} \leq \big[\alpha((c|x|+d)^*)^p+\beta((c|x|+d)^*)^p\big]^{\frac{1}{p}}= (\alpha+\beta)^{\frac{1}{p}} (c|x|+d)^*.$$ It follows from \eqref{def-dual-1} and \eqref{const-mul-dual} that for all $y\in \R^n$,
	\begin{equation*}  \phi^*(y)\geq  \big[(\alpha+\beta)^{\frac{1}{p}} (c|x|+d)^*\big]^*(y)  =  c|y| +d  (\alpha+\beta)^{\frac{1}{p}}.\end{equation*}
	In particular,  $\lim_{|y|\rightarrow+\infty} \phi^*(y)=+\infty$. 	This concludes that 	 $\alpha\cdot_p f\,\oplus_p\,\beta\cdot_p g=e^{-\phi^*}\in\A_0.$
\end{proof}

Let $\chi_E$ be the characteristic function of $E$, i.e., $\chi_E(x)=1$ if $x\in E$ and $\chi_E(x)=0$ if $x\notin E$. 		
It is well-known that
if $K\in \K_{(o)}^n$, 
then $\chi_K$ is a log-concave function with $\mathbf{I}_K=-\log(\chi_K)\in \L_0$ and hence $\chi_K\in \A_0$. By \eqref{support-1} and \eqref{def-dual-1}, one sees that $ (\mathbf{I}_K)^*=h_K$ and  $(h_K)^*=\mathbf{I}_K$. Together with \eqref{Lp-addition}, for two convex bodies $K, L\in \K_{(o)}^n$,  one has, for $p\geq 1$, \begin{equation*}
(\mathbf{I}_K)\cdot_p \alpha \square_{p} (\mathbf{I}_L)\cdot_p \beta=\Big(
\big(\alpha((\mathbf{I}_K)^*)^p+\beta((\mathbf{I}_L)^*)^p\big)^{\frac{1}{p}}\Big)^*=\Big(
\big(\alpha h_K^p+\beta h_L^p\big)^{\frac{1}{p}}\Big)^*=\mathbf{I}_{\alpha \cdot_p K+_p\beta\cdot_p L}. 
\end{equation*}  
Therefore, for $\alpha, \beta>0$ and $p\geq 1$,  $$	\alpha\cdot_p\chi_K\oplus_p\beta\cdot_p\chi_L=e^{-(\mathbf{I}_K)\cdot_p \alpha \square_{p} (\mathbf{I}_L)\cdot_p \beta}=\chi_{\alpha \cdot_p K+_p\beta\cdot_p L}.$$ Moreover,  for two convex bodies $K, L\in \K_{(o)}^n$ and for any $\alpha, \beta>0$, 
\begin{eqnarray}
J(\alpha\cdot_p \chi_K\oplus_p \beta\cdot_p \chi_L) 
=\int_{\R^n}  \chi_{\alpha\cdot_p K+_p \beta\cdot_p L} (x)\,dx = V(\alpha \cdot_p K+_p \beta\cdot_p L). \label{volume-p-addition-515} \end{eqnarray} By the $L_p$ Brunn-Minkowski inequality \eqref{p-BM},   for $p\geq 1$, $\lambda\in (0, 1)$, and $K, L\in \K_{(o)}^n$,  one has \begin{eqnarray}\label{p-BM-2-2}
V\big((1-\lambda)\cdot_p K+_p \lambda\cdot_p L\big)\geq \Big((1-\lambda) V(K)^{\frac{p}{n}}+\lambda V( L)^{\frac{p}{n}}\Big)^{\frac{n}{p}}\geq V(K)^{1-\lambda}V(L)^{\lambda},
\end{eqnarray} where the last inequality follows from the fact that the logarithmic function is concave. Combining \eqref{volume-p-addition-515}  and \eqref{p-BM-2-2}, one easily sees that, for $p\geq 1$, $\lambda\in (0, 1)$, and $K, L\in \K_{(o)}^n$,  \begin{eqnarray*} J\Big((1-\lambda)\cdot_p \chi_K\oplus_p \lambda\cdot_p \chi_L\Big)&\geq & J(\chi_K)^{1-\lambda}J(\chi_L)^{\lambda}. \end{eqnarray*}  This is a Pr\'ekopa-Leindler type inequality  and can be extended to all $f, g\in \A_0$.  This result is proved in the following theorem, where $p>1$ is concentrated as the case $p=1$ reduces to the classical  {\it Pr\'ekopa-Leindler inequality} \eqref{PLineq}.  

\begin{theorem}\label{prekopainequality} Let  $f=e^{-\varphi}\in \A_0$ and $g=e^{-\psi} \in\A_0$. For $\lambda\in (0,1)$ and $p>1$,  it holds that
	\begin{eqnarray}\label{prekopainequality1}
	J\big((1-\lambda)\cdot_p f\oplus_{p} \lambda\cdot_p g\big)\geq J(f)^{1-\lambda}J(g)^{\lambda}
	\end{eqnarray}
	with  equality if and only if $f=g$ on $\R^n$. 
\end{theorem}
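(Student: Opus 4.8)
The plan is to reduce the Prékopa–Leindler type inequality \eqref{prekopainequality1} to the classical Prékopa–Leindler inequality \eqref{PLineq} by exploiting a pointwise comparison between the $L_p$ Asplund sum and the ordinary Asplund sum. Set $\phi = \big[(1-\lambda)(\varphi^*)^p + \lambda(\psi^*)^p\big]^{1/p}$, so that by Proposition~\ref{close-Lp-A-sum} we have $(1-\lambda)\cdot_p f \oplus_p \lambda\cdot_p g = e^{-\phi^*}$. On the other hand, the ordinary convex combination $(1-\lambda)\cdot f \oplus \lambda\cdot g$ corresponds, via \eqref{inf-conv-1}, to the convex function whose conjugate is $(1-\lambda)\varphi^* + \lambda\psi^*$. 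Since $\varphi^*, \psi^* \geq 0$ by \eqref{non-negative-6-26}, the elementary inequality
\begin{equation*}
\big[(1-\lambda)s^p + \lambda t^p\big]^{1/p} \geq (1-\lambda)s + \lambda t \qquad (s,t\geq 0,\ p>1),
\end{equation*}
which is just the convexity of $r\mapsto r^p$ on $[0,\infty)$ applied in the form of the power-mean inequality, gives $\phi \geq (1-\lambda)\varphi^* + \lambda\psi^*$ pointwise on $\R^n$. Taking Fenchel conjugates reverses the inequality: $\phi^* \leq \big((1-\lambda)\varphi^* + \lambda\psi^*\big)^* = \big((\varphi^*\cdot(1-\lambda)^{-1})^* \,\square\, \cdots\big)$; more cleanly, $\phi^* \le$ the convex function associated with the Asplund combination. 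Hence $e^{-\phi^*} \geq (1-\lambda)\cdot f \oplus \lambda\cdot g$ pointwise, and integrating,
\begin{equation*}
J\big((1-\lambda)\cdot_p f \oplus_p \lambda\cdot_p g\big) = \int_{\R^n} e^{-\phi^*}\,dx \;\geq\; J\big((1-\lambda)\cdot f \oplus \lambda\cdot g\big) \;\geq\; J(f)^{1-\lambda} J(g)^{\lambda},
\end{equation*}
where the last step is exactly \eqref{PLineq}. This settles the inequality.

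For the equality characterization, suppose equality holds in \eqref{prekopainequality1}. Then both inequalities in the chain above must be equalities. Equality in the second forces, by the equality case of the classical Prékopa–Leindler inequality, that $f(x) = g(x-x_0)$ for some $x_0 \in \R^n$; but since $\varphi,\psi \in \L_0$ both attain their minimum value $0$ at the origin (and only there, after accounting for the level structure), the translate must be trivial, i.e.\ $x_0 = o$ and $f = g$. I would make this rigidity precise by noting that $\varphi(o) = \psi(o) = 0$ are the global minima, so $f(o) = g(o) = 1$ is the maximum of each; the relation $f(x) = g(x - x_0)$ then forces $g(-x_0) = 1$, hence $-x_0$ is a minimizer of $\psi$. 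One then needs that the minimizer set of $\psi$ contains $o$ and, combined with $f=g(\cdot - x_0)$, deduce $x_0=o$ — this is clean if the minimizers form a set containing the origin in a way compatible with the translate; alternatively one observes directly that equality in the \emph{first} inequality ($e^{-\phi^*} \geq (1-\lambda)\cdot f\oplus\lambda\cdot g$ as an identity of functions, forced since both integrands are nonnegative and the integrals agree) already pins down $\phi^* = \big((1-\lambda)\varphi^*+\lambda\psi^*\big)^*$, equivalently $\phi = (1-\lambda)\varphi^* + \lambda\psi^*$ on $\dom$, and the equality case of the power-mean inequality then yields $\varphi^* = \psi^*$ pointwise, whence $\varphi = \psi$ by the Fenchel–Moreau theorem and $f = g$. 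Conversely, if $f = g$ then $\phi = (\varphi^*)$ up to the scalar $((1-\lambda)+\lambda)^{1/p}=1$, so $e^{-\phi^*} = f$ and both sides of \eqref{prekopainequality1} equal $J(f)$.

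The main obstacle I anticipate is the equality analysis, specifically making airtight the passage from "equality in power-mean $\Rightarrow$ $\varphi^* = \psi^*$ everywhere." The power-mean inequality is strict unless $s = t$, so equality of $\phi$ with the linear combination on all of $\R^n$ gives $\varphi^*(y) = \psi^*(y)$ for every $y$ — but one must be careful on the set where $\varphi^* = \psi^* = +\infty$ and ensure the conjugation step ($\phi^* = (\text{lin.\ comb.})^*$ as functions, not just integrals) is justified; this uses that both $e^{-\phi^*}$ and $e^{-((1-\lambda)\varphi^*+\lambda\psi^*)^*}$ are continuous where finite and that a nonnegative integrable function dominated by another with equal integral must coincide a.e., hence everywhere by lower semicontinuity. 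I would route the argument through the \emph{first} inequality rather than invoking the full equality case of classical Prékopa–Leindler, since that keeps everything internal to the conjugation machinery already set up in Section~\ref{Preliminaries} and avoids the translate bookkeeping. The rest — the forward inequality — is routine given Proposition~\ref{close-Lp-A-sum} and \eqref{PLineq}.
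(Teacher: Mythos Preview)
Your proposal is correct and follows essentially the same route as the paper: compare $\phi=[(1-\lambda)(\varphi^*)^p+\lambda(\psi^*)^p]^{1/p}$ with the linear combination $(1-\lambda)\varphi^*+\lambda\psi^*$ via the power-mean (equivalently, concavity of $t\mapsto t^{1/p}$), pass to conjugates, and invoke the classical Pr\'ekopa--Leindler inequality \eqref{PLineq}. For the equality case, the paper uses \emph{both} equality conditions simultaneously---the translate $f(\cdot)=g(\cdot-x_0)$ from \eqref{PLineq} together with the strict concavity of $t^{1/p}$ to force $\varphi^*=\psi^*$---whereas you prefer to argue from the first inequality alone; either works, and the a.e.-to-everywhere passage you flag as the obstacle is handled in the paper implicitly by combining the two conditions (the translate makes $\dom(\varphi^*)=\dom(\psi^*)$ automatic, so the power-mean rigidity applies on a set of nonempty interior).
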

\begin{proof} Let  $f=e^{-\varphi}\in \A_0$ and $g=e^{-\psi} \in\A_0$.   For $\lambda\in (0,1)$ and $p>1$,  let  
	$
	(1-\lambda )\cdot_p f\oplus_{p} \lambda \cdot_p g=e^{-\phi_{\lambda}^*}.
	$ According to Proposition \ref{close-Lp-A-sum},  one has $(\phi_{\lambda}^*)^*=\phi_{\lambda}$ and  $\phi_{\lambda }=\big[(1-\lambda)(\varphi^*)^p+\lambda (\psi^*)^p\big]^{\frac{1}{p}}.$  Since the function $t^{\frac{1}{p}}$ is  strictly concave  on $t\in [0, \infty)$ when $p>1$, one has,  \begin{equation}\label{p-conv-equality-515} \phi_{\lambda }=\big[(1-\lambda)(\varphi^*)^p+\lambda (\psi^*)^p\big]^{\frac{1}{p}}  \geq(1-\lambda) \varphi^*+\lambda \psi^*. \end{equation} Together with \eqref{inf-conv-1}, this in turn implies that 
	\begin{eqnarray}\label{relationship of p and 1 infimal convolution}
	\phi_{\lambda}^*  \leq \big((1-\lambda) \varphi^*+\lambda \psi^* \big)^*  
	=  \varphi (1-\lambda) \Box  \psi  \lambda.
	\end{eqnarray} By the classical Pr\'ekopa-Leindler inequality (\ref{PLineq}),  inequality \eqref{prekopainequality1} holds: 
	\begin{eqnarray} 
	J\big((1-\lambda)\cdot_p f\oplus_{p}\lambda \cdot_p g\big)= \int _{\R^n} e^{-\phi_{\lambda}^*(x)}\,dx \geq   \int _{\R^n} e^{-\big(\varphi (1-\lambda) \Box  \psi  \lambda\big)(x)}\,dx 		\geq   J(f)^{1-\lambda}J(g)^{\lambda}.\label{p-prekopa-l}
	\end{eqnarray} 
	
	Now let us characterize the equality condition.  It is obvious  that equality holds if $f=g$. On the other hand, to have equality in  \eqref{prekopainequality1}, equalities must occur  in \eqref{p-prekopa-l}, and consequently equalities hold for the classical Pr\'ekopa-Leindler inequality (\ref{PLineq}) and for \eqref{relationship of p and 1 infimal convolution} (equivalently for \eqref{p-conv-equality-515}). The former one implies that there exists   $x_0\in\R^n$ such that  $f(x)=g(x-x_0)$, which in turn yields $\varphi(x)=\psi(x-x_0)$. Combining with the latter one and the fact that $t^{\frac{1}{p}}$ is  strictly concave, one gets  $\varphi^*=\psi^*$ and hence $f=g$ as desired. \end{proof}

\begin{corollary} \label{concave-PL-1} Let  $f=e^{-\varphi}\in \A_0$, $g=e^{-\psi} \in\A_0$ and $p>1$. Then, $J(f\oplus _p t\cdot_p g)$ is log-concave on $t\in (0, \infty)$ and $J\big((1-t)\cdot_p f\oplus_p t\cdot_p g\big)$ is log-concave on $t\in (0, 1)$. 
\end{corollary}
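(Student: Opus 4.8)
The plan is to derive both log-concavity assertions directly from the Pr\'ekopa--Leindler type inequality of Theorem~\ref{prekopainequality}, using the fact that, at the level of Fenchel conjugates, the $L_p$ Asplund sum depends affinely on the parameter $t$. Concretely, fix $t_0,t_1$ in the relevant interval and $\lambda\in(0,1)$, and set $t_\lambda=(1-\lambda)t_0+\lambda t_1$. For the first claim I would put $F=f\oplus_p t_0\cdot_p g$ and $G=f\oplus_p t_1\cdot_p g$; by Proposition~\ref{close-Lp-A-sum} both lie in $\A_0$, and writing $F=e^{-\varphi_F}$, $G=e^{-\varphi_G}$, the identity \eqref{biconjugate-1} gives $\varphi_F^*=\big[(\varphi^*)^p+t_0(\psi^*)^p\big]^{1/p}$ and $\varphi_G^*=\big[(\varphi^*)^p+t_1(\psi^*)^p\big]^{1/p}$.

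Next I would form the weighted $L_p$ Asplund sum $(1-\lambda)\cdot_p F\oplus_p\lambda\cdot_p G$. By the definition \eqref{prodottoL} it equals $e^{-\Phi^*}$ with $\Phi=\big[(1-\lambda)(\varphi_F^*)^p+\lambda(\varphi_G^*)^p\big]^{1/p}$, and a one-line computation gives $(1-\lambda)(\varphi_F^*)^p+\lambda(\varphi_G^*)^p=(\varphi^*)^p+t_\lambda(\psi^*)^p$, so that $(1-\lambda)\cdot_p F\oplus_p\lambda\cdot_p G=f\oplus_p t_\lambda\cdot_p g$. Theorem~\ref{prekopainequality} then yields
$$
J\big(f\oplus_p t_\lambda\cdot_p g\big)=J\big((1-\lambda)\cdot_p F\oplus_p\lambda\cdot_p G\big)\ge J(F)^{1-\lambda}J(G)^{\lambda}=J\big(f\oplus_p t_0\cdot_p g\big)^{1-\lambda}J\big(f\oplus_p t_1\cdot_p g\big)^{\lambda},
$$
which is exactly the log-concavity of $t\mapsto J(f\oplus_p t\cdot_p g)$ on $(0,\infty)$. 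The second claim is proved the same way, replacing $F,G$ by $(1-t_0)\cdot_p f\oplus_p t_0\cdot_p g$ and $(1-t_1)\cdot_p f\oplus_p t_1\cdot_p g$: then $(1-\lambda)(\varphi_F^*)^p+\lambda(\varphi_G^*)^p=(1-t_\lambda)(\varphi^*)^p+t_\lambda(\psi^*)^p$, hence $(1-\lambda)\cdot_p F\oplus_p\lambda\cdot_p G=(1-t_\lambda)\cdot_p f\oplus_p t_\lambda\cdot_p g$, and Theorem~\ref{prekopainequality} again gives the desired inequality on $(0,1)$.

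I do not anticipate a real obstacle: once Theorem~\ref{prekopainequality} is in hand the argument is a short computation. The one place demanding a little care is the conjugacy bookkeeping --- invoking the Fenchel--Moreau identity \eqref{biconjugate-1} to recognize that the conjugate of $\varphi_F$ is again $\big[(\varphi^*)^p+t_0(\psi^*)^p\big]^{1/p}$ (which relies on this function being convex and lower semi-continuous, as established in Proposition~\ref{close-Lp-A-sum}), so that the weighted $L_p$ Asplund sum of $F$ and $G$ can be unwound back into an $L_p$ Asplund sum of $f$ and $g$ with the averaged parameter $t_\lambda$.
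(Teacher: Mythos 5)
Your proposal is correct, and for the second assertion it is exactly the argument the paper has in mind: the paper merely states that the log-concavity of $J\big((1-t)\cdot_p f\oplus_p t\cdot_p g\big)$ is ``a direct result of Theorem \ref{prekopainequality}'', and your identification $(1-\lambda)\cdot_p\big((1-t_0)\cdot_p f\oplus_p t_0\cdot_p g\big)\oplus_p\lambda\cdot_p\big((1-t_1)\cdot_p f\oplus_p t_1\cdot_p g\big)=(1-t_\lambda)\cdot_p f\oplus_p t_\lambda\cdot_p g$ is precisely the bookkeeping that makes this ``direct''. For the first assertion your route differs mildly from the paper's: you again use the exact affine identity $(1-\lambda)(\varphi_F^*)^p+\lambda(\varphi_G^*)^p=(\varphi^*)^p+t_\lambda(\psi^*)^p$ (legitimate, since Proposition \ref{close-Lp-A-sum} and \eqref{biconjugate-1} guarantee $F,G\in\A_0$ and $\varphi_F^*=\big[(\varphi^*)^p+t_0(\psi^*)^p\big]^{1/p}$) and then invoke Theorem \ref{prekopainequality} for $F$ and $G$; the paper instead uses the concavity of $u\mapsto u^{1/p}$ to get the pointwise estimate $\phi_\eta\geq(1-\lambda)\phi_t+\lambda\phi_s$, deduces $f\oplus_p\eta\cdot_p g\geq(1-\lambda)\cdot\big(f\oplus_p t\cdot_p g\big)\oplus\lambda\cdot\big(f\oplus_p s\cdot_p g\big)$ for the classical ($p=1$) Asplund sum, and concludes with the classical Pr\'ekopa--Leindler inequality \eqref{PLineq}. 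Both are valid; yours is more uniform and slightly shorter, handling both claims with the single $L_p$ inequality of Theorem \ref{prekopainequality} via an exact identity, while the paper's argument for the first claim only calls on the classical inequality and records along the way a pointwise domination by a $p=1$ Asplund combination. Since Theorem \ref{prekopainequality} is itself proved from \eqref{PLineq} through the same concavity estimate, the two arguments are close in substance, differing mainly in which inequality is quoted and whether one works with an inequality between functions or an exact identity.
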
 \begin{proof} 
	Let  $f=e^{-\varphi}\in \A_0$ and $g=e^{-\psi} \in\A_0$.  For $\lambda\in (0, 1)$ and $t, s\in (0, \infty)$, let $\eta=(1-\lambda)t+\lambda s$.  According to Proposition \ref{close-Lp-A-sum}, one gets  $f\oplus_p \eta \cdot_p g=e^{-\phi^*_{\eta}}$  with $\phi_{\eta}=\big[(\varphi^*)^p+\eta(\psi^*)^p\big]^{\frac{1}{p}}.$
	It follows from Proposition \ref{close-Lp-A-sum} and the fact that the function $t^{\frac{1}{p}}$ is concave on $t\in (0, \infty)$ for $p>1$ that 
	\begin{align*} 
		\phi_{\eta}
		&= \big[\big((1-\lambda) (\varphi^*)^p+t (\psi^*)^p\big)+\lambda \big( (\varphi^*)^p+ s (\psi^*)^p\big)\big]^{\frac{1}{p}} \\
		&\geq (1-\lambda)   \big((\varphi^*)^p+t (\psi^*)^p\big)^{\frac{1}{p}} +\lambda  \big( (\varphi^*)^p+ s (\psi^*)^p\big)^{\frac{1}{p}}\\ 
		&=  (1-\lambda)  (\phi^*_{t})^*+\lambda  (\phi^*_{s})^*.\end{align*}   
	By \eqref{A-sum-1} and \eqref{inf-conv-1}, one gets   $ \phi^*_{\eta}\leq \big((1-\lambda)  (\phi^*_{t})^*+\lambda  (\phi^*_{s})^*\big)^*=  \phi^*_{t}(1-\lambda) \Box   \phi^*_{s} \lambda $
	and hence, $$f\oplus_p \eta \cdot_p g=e^{-\phi^*_{\eta}}\geq (1-\lambda)\cdot \big(f\oplus_p t \cdot_p g\big)\oplus \lambda\cdot \big(f\oplus_p s \cdot_p g\big).$$ By the 
	classical  Pr\'ekopa-Leindler inequality \eqref{PLineq}, one gets the desired log-concavity for $J(f\oplus _p t\cdot_p g)$ on $t\in (0, \infty)$, that is, 
 $$\log J(f\oplus_p \eta \cdot_p g) \geq (1-\lambda) \log  J(f\oplus_p t \cdot_p g)+\lambda\log J(f\oplus_p s \cdot_p g).$$  	
	
	The log-concavity for $J\big((1-t)\cdot_p f\oplus _p t\cdot_p g\big)$  on $t\in (0, 1)$   is a direct result of Theorem \ref{prekopainequality}.  
\end{proof}

\section{An $L_p$ Minkowski type inequality} \label{sectoion-4-6-23} \setcounter{equation}{0}

In this section, we propose a definition for $\delta J_{p}(f, g)$,  the first variation of the total mass at $f$ along $g$ with respect to the $L_{p}$  Asplund sum. A Minkowski type inequality for $\delta J_{p}(f, g)$ will be established.   Our definition for $\delta J_{p}(f, g)$ is given below.   
\begin{definition}\label{variation-p-5-15-1} 
	Let $f, g  \in\A_0$. For $p>1,$  define $\delta J_{p}(f, g)$ by   
	\[
	\delta J_{p}(f,g)=\underset{t\rightarrow0^+}{\lim}\frac{J(f\oplus_{p} t\cdot_p g)-J(f)}{t}
	\]
	whenever the  limit exists. 
\end{definition}

Let $J(f)>0$ and $\mathrm{Ent}(f)$ be the entropy of $f$ which  may be formulated by \begin{equation}\label{def-entropy}
\mathrm{Ent}(f)=\int_{\R^n}f\log f\,dx-J(f)\log J(f).  
\end{equation}  
According to \cite[Proposition 3.11]{CF13}, $\mathrm{Ent} (f)$ is finite if $f\in \A$   such that  $J(f)>0$. 
Our main result in this section is the following Minkowski type inequality for $\delta J_{p}(f, g)$.  We only focus on $p>1$ as  inequality \eqref{mink1}  and its equality condition for $p=1$
have already been proved  in  \cite[Theorem 5.1]{CF13}.

\begin{theorem}\label{minkowskiinequality}
	Let $f, g \in\A_0$ such that $J(f)>0$ and $J(g)>0$.  For $p> 1,$ one has, 
	\begin{equation}\label{mink1}
	\delta J_{p}(f,g)\geq J(f)\left[ \frac{n}{p} +\frac{1-p}{p}\log J(f)+\log J(g)\right]+\frac{1}{p}{\rm Ent}(f),
	\end{equation}
	with equality if and only if  $f=g$. \end{theorem}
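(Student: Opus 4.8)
The plan is to mirror the classical argument for the case $p=1$ from \cite[Theorem 5.1]{CF13}, but to replace the linear infimal-convolution estimate there by the $L_p$ version already established in Section~\ref{p-sum of log-concave function}. The key observation is that Corollary~\ref{concave-PL-1} gives that $t\mapsto J(f\oplus_p t\cdot_p g)$ is log-concave on $(0,\infty)$, so the function $\Phi(t):=\log J(f\oplus_p t\cdot_p g)$ is concave; hence its right derivative at $t=0$ exists in $(-\infty,+\infty]$, and by concavity it dominates the difference quotient taken at any fixed $t>0$:
\begin{equation*}
\Phi'(0^+)\ \geq\ \frac{\Phi(t)-\Phi(0)}{t}\qquad\text{for every }t>0.
\end{equation*}
Since $J(f)>0$, the limit $\delta J_p(f,g)=\lim_{t\to0^+}\frac{J(f\oplus_p t\cdot_p g)-J(f)}{t}$ exists (allowing $+\infty$) and equals $J(f)\,\Phi'(0^+)$ by the chain rule for one-sided derivatives, so the displayed inequality becomes
\begin{equation*}
\delta J_p(f,g)\ \geq\ \frac{J(f)}{t}\,\log\!\Big(\frac{J(f\oplus_p t\cdot_p g)}{J(f)}\Big)\qquad\text{for every }t>0.
\end{equation*}

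The second step is to bound $J(f\oplus_p t\cdot_p g)$ from below. Writing $f\oplus_p t\cdot_p g=e^{-\phi_t^*}$ with $\phi_t=\big[(\varphi^*)^p+t(\psi^*)^p\big]^{1/p}$ as in Proposition~\ref{close-Lp-A-sum}, one uses concavity of $s\mapsto s^{1/p}$ to get, for $t\in(0,1)$,
\begin{equation*}
\phi_t=\big[(1-t)(\varphi^*)^p+t\big((\varphi^*)^p+(\psi^*)^p\big)\big]^{1/p}\ \geq\ (1-t)\,\varphi^*+t\,\big((\varphi^*)^p+(\psi^*)^p\big)^{1/p},
\end{equation*}
and therefore $\phi_t^*\leq\big((1-t)\varphi^*+t((\varphi^*)^p+(\psi^*)^p)^{1/p}\big)^*$, which by \eqref{inf-conv-1} is the infimal convolution $\varphi(1-t)\,\Box\,h\,t$ with $h:=\big((\varphi^*)^p+(\psi^*)^p\big)^{1/p}$, i.e. the $L_1$ Asplund combination of $f$ and $k:=e^{-h^*}=f\oplus_p g$. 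Thus $f\oplus_p t\cdot_p g\geq(1-t)\cdot f\oplus t\cdot k$ pointwise, and the classical Pr\'ekopa–Leindler inequality \eqref{PLineq} yields $J(f\oplus_p t\cdot_p g)\geq J(f)^{1-t}J(f\oplus_p g)^{t}$. Plugging this into the previous display and letting $t\to0^+$ gives the "linearized" bound
\begin{equation*}
\delta J_p(f,g)\ \geq\ J(f)\,\log\!\Big(\frac{J(f\oplus_p g)}{J(f)}\Big).
\end{equation*}
(Alternatively, and more cleanly, one can note that $\delta J_p(f,g)=J(f)\Phi'(0^+)$ and that $\Phi$ concave gives $\Phi'(0^+)\ge \Phi(1)-\Phi(0)=\log(J(f\oplus_p g)/J(f))$ directly.)

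The final step is to estimate $J(f\oplus_p g)$ itself in terms of $J(f)$, $J(g)$, $\mathrm{Ent}(f)$ and $\delta J_p(f,f)$. Here I would combine the $L_p$ Pr\'ekopa–Leindler inequality of Theorem~\ref{prekopainequality}, applied to the two functions $f$ and $g$ with a suitable rescaling, together with the identity $\delta J_p(f,f)=\frac{n}{p}J(f)+\frac{1}{p}\int_{\R^n}f\log f\,dx$ from Lemma~\ref{specialcaseoffg}. Concretely, consider the one-parameter family and use homogeneity: for $\alpha,\beta>0$ one has $\alpha\cdot_p f\oplus_p\beta\cdot_p g$, and choosing $\alpha=(1-\lambda)$, $\beta=\lambda$ and optimizing (or rather taking $\lambda\to0$ after a scaling that turns $f\oplus_p g$ into $f\oplus_p(g/\text{something})$) should convert the multiplicative Pr\'ekopa–Leindler bound $J((1-\lambda)\cdot_p f\oplus_p\lambda\cdot_p g)\ge J(f)^{1-\lambda}J(g)^{\lambda}$ into the additive estimate needed, namely
\begin{equation*}
\log\!\Big(\frac{J(f\oplus_p g)}{J(f)}\Big)\ \geq\ \frac{n}{p}+\frac{1-p}{p}\log J(f)+\log J(g)+\frac{1}{p}\cdot\frac{\mathrm{Ent}(f)}{J(f)}\ -\ \frac{\delta J_p(f,f)}{J(f)}\ +\ \frac{\delta J_p(f,f)}{J(f)}.
\end{equation*}
More precisely the right normalization is to apply Theorem~\ref{prekopainequality} not to $f,g$ but to $f$ and the dilate $g_\alpha$ chosen so that $f\oplus_p g_\alpha$ appears, and then feed in $\delta J_p(f,f)$ through the $t$-derivative of $J(f\oplus_p t\cdot_p f)$ evaluated via Lemma~\ref{specialcaseoffg}. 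I expect the bookkeeping of these scalings to be the main obstacle: one must track how the $(\varphi^*)^p$-homogeneity interacts with the entropy term and the factor $(1-p)/p\log J(f)$, and verify the arithmetic reduces exactly to \eqref{mink1}.

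For the equality characterization: equality in the chain forces equality in the concavity step (so $\Phi$ is affine near $0$, equivalently equality in $L_p$ Pr\'ekopa–Leindler applied along the way), equality in the $s\mapsto s^{1/p}$ concavity step (forcing $\varphi^*$ proportional to $\psi^*$ pointwise, hence, using $\varphi^*(o)=\psi^*(o)=0$ and properness, $\varphi^*\equiv\psi^*$), and equality in the classical Pr\'ekopa–Leindler step (forcing equality up to translation). Since $\varphi^*=\psi^*$ already gives $\varphi=\psi$ by Fenchel–Moreau (both lie in $\L_0$ hence are lower semi-continuous), we conclude $f=g$. Conversely $f=g$ gives equality directly from the definition of $\delta J_p(f,f)$ and \eqref{def-entropy}. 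The only delicate point is ruling out the translation ambiguity: the normalization $\varphi(o)=\psi(o)=0$ together with $\varphi^*=\psi^*$ pins down the translate, since a nontrivial translation would change the value at $o$ or violate $\varphi^*(o)=0$.
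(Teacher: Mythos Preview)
Your proposal has a genuine gap in the ``final step.'' The first part is fine: concavity of $\Phi(t)=\log J(f\oplus_p t\cdot_p g)$ does give
\[
\delta J_p(f,g)\ \ge\ J(f)\big(\Phi(1)-\Phi(0)\big)=J(f)\log\!\Big(\frac{J(f\oplus_p g)}{J(f)}\Big).
\]
But to reach \eqref{mink1} from here you would need the lower bound
\[
\log J(f\oplus_p g)\ \ge\ \frac{\delta J_p(f,f)}{J(f)}+\log J(g),
\]
and this inequality is \emph{false}. Take $f=g=e^{-|x|^2/2}$: then $J(f\oplus_p f)=2^{n/(2p)}(2\pi)^{n/2}$, while $\delta J_p(f,f)/J(f)=\tfrac{n}{2p}$ by Lemma~\ref{specialcaseoffg}, so the required bound becomes $\tfrac{n}{2p}\log 2\ge \tfrac{n}{2p}$, which fails. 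In fact concavity of $\Phi$ forces $\Phi(1)-\Phi(0)\le \Phi'(0^+)=\delta J_p(f,f)/J(f)$ whenever $f=g$, which goes the wrong way. Thus your first inequality is already strictly lossy at $f=g$, so no subsequent lower bound on $J(f\oplus_p g)$ can recover equality there. The vague ``scaling'' manoeuvre you suggest cannot repair this, and the displayed identity in your final step is a tautology.

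The paper's argument avoids this by working with the \emph{convex} combination $(1-t)\cdot_p f\oplus_p t\cdot_p g$ rather than with $f\oplus_p t\cdot_p g$. Log-concavity of $t\mapsto J((1-t)\cdot_p f\oplus_p t\cdot_p g)$ (Corollary~\ref{concave-PL-1}) together with the endpoint bounds immediately gives
\[
\lim_{t\to 0^+}\frac{J((1-t)\cdot_p f\oplus_p t\cdot_p g)-J(f)}{t}\ \ge\ J(f)\log\!\Big(\frac{J(g)}{J(f)}\Big),
\]
which is sharp when $f=g$ (both sides vanish). The entropy term then enters not through a lower bound on $J(f\oplus_p g)$ but through the decomposition identity of Lemma~\ref{lemma35},
\[
\lim_{t\to 0^+}\frac{J((1-t)\cdot_p f\oplus_p t\cdot_p g)-J(f)}{t}=\delta J_p(f,g)-\delta J_p(f,f),
\]
after which Lemma~\ref{specialcaseoffg} supplies the value of $\delta J_p(f,f)$. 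This is the missing idea: the $\delta J_p(f,f)$ contribution must be isolated via the $(1-t)$--rescaling and the identity of Lemma~\ref{lemma35}, not squeezed out of a Pr\'ekopa--Leindler bound on $J(f\oplus_p g)$.
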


	  In order to prove Theorem \ref{minkowskiinequality}, we shall need some preparation. The following result shows how to calculate  $\delta J_{p}(f, f)$. Again, the case for $p=1$ has been covered in \cite[Proposition 3.11]{CF13} and will not be repeated in the following result. From Lemma \ref{specialcaseoffg}, one sees that $\delta J_{p}(f,f)$ is finite if $f\in \A$   such that  $J(f)>0$.

\begin{lemma}\label{specialcaseoffg} Let $f\in \A_0$  such that $J(f)>0$.  For $p>1,$ one has 
	\begin{equation}\label{entropu-p>1}
	\delta J_{p}(f,f)= \frac{n}{p} J(f)   + \frac{1}{p} \int_{\R^n} f \log f \, dx=\frac{n+\log J(f)}{p} J(f)+\frac{1}{p} \mathrm{Ent}(f).
	\end{equation} 
\end{lemma}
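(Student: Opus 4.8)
The plan is to compute $\delta J_p(f,f)$ directly from the definition by exploiting the fact that the $L_p$ Asplund sum of $f$ with $t\cdot_p f$ is, up to the scaling $(\varphi\cdot_p\alpha)(x)=\alpha^{1/p}\varphi(\alpha^{-1/p}x)$, a dilation of $f$. Indeed, by Proposition \ref{close-Lp-A-sum}, $f\oplus_p t\cdot_p f=e^{-\phi_t^*}$ with $\phi_t=[(\varphi^*)^p+t(\psi^*)^p]^{1/p}$ where here $\psi=\varphi$, so $\phi_t=(1+t)^{1/p}\varphi^*$. Using \eqref{const-mul-dual}, $\phi_t^*=\big((1+t)^{1/p}\varphi^*\big)^*=(1+t)^{1/p}\varphi^{**}\big(\,\cdot\,/(1+t)^{1/p}\big)=(1+t)^{1/p}\varphi\big(\,\cdot\,/(1+t)^{1/p}\big)$, the last equality because $\varphi\in\L_0$ is lower semi-continuous so $\varphi^{**}=\varphi$ by Fenchel--Moreau. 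Writing $\lambda=\lambda(t)=(1+t)^{1/p}$, this says $f\oplus_p t\cdot_p f=e^{-\lambda\varphi(\cdot/\lambda)}$, a familiar one-parameter family.

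Next I would compute the total mass of this family explicitly. By the change of variables $x=\lambda z$,
\begin{equation*}
J\big(f\oplus_p t\cdot_p f\big)=\int_{\R^n}e^{-\lambda\varphi(x/\lambda)}\,dx=\lambda^n\int_{\R^n}e^{-\lambda\varphi(z)}\,dz=\lambda^n\int_{\R^n}f(z)^{\lambda}\,dz.
\end{equation*}
So $\delta J_p(f,f)=\lim_{t\to0^+}\frac{1}{t}\big[\lambda(t)^n\int f^{\lambda(t)}\,dz-\int f\,dz\big]$, and since $\lambda(0)=1$ with $\frac{d\lambda}{dt}\big|_{t=0^+}=\frac1p$, it suffices (by the chain rule) to differentiate $F(\lambda):=\lambda^n\int_{\R^n}f^{\lambda}\,dz$ at $\lambda=1$ and multiply by $\frac1p$. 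Formally $F'(1)=n\int f\,dz+\int f\log f\,dz=nJ(f)+\int_{\R^n}f\log f\,dx$, which together with the $\frac1p$ factor gives the first equality in \eqref{entropu-p>1}; the second equality is then just the definition \eqref{def-entropy} of $\mathrm{Ent}(f)$, rewriting $\int f\log f\,dx=\mathrm{Ent}(f)+J(f)\log J(f)$.

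The main obstacle is justifying the interchange of limit (derivative in $\lambda$) and integral, i.e. making the formal computation of $F'(1)$ rigorous, together with the finiteness claim. For this I would proceed as follows. First, for $\lambda$ in a neighborhood of $1$, say $\lambda\in[1-\epsilon,1+\epsilon]$ with $\epsilon<1$, I dominate the difference quotient $\frac{f(z)^{\lambda}-f(z)}{\lambda-1}$. Since $\varphi\in\L_0$ is non-negative, $0\le f\le1$, so on $\{f\le e^{-1}\}$ the map $\lambda\mapsto f^\lambda$ is increasing and $|\frac{f^\lambda-f}{\lambda-1}|\le|f^{1-\epsilon}\log f|\le C_\epsilon f^{1-2\epsilon}\le C_\epsilon e^{-(1-2\epsilon)\varphi}$, which is integrable once $\epsilon$ is small because \eqref{claimab} gives $\varphi(z)\ge a|z|+b$, hence $e^{-(1-2\epsilon)\varphi}$ has finite integral; on the bounded set $\{f>e^{-1}\}$ everything is trivially controlled. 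Hence by the dominated convergence theorem $\frac{d}{d\lambda}\int f^\lambda\,dz=\int f^\lambda\log f\,dz$ near $\lambda=1$, and in particular $\int_{\R^n}f\log f\,dz$ is finite (which is also guaranteed by \cite[Proposition 3.11]{CF13} via the finiteness of $\mathrm{Ent}(f)$). Differentiating the product $\lambda^n\cdot\int f^\lambda\,dz$ and evaluating at $\lambda=1$ yields $F'(1)=nJ(f)+\int f\log f\,dz$; combined with $\frac{d\lambda}{dt}|_{0^+}=\frac1p$ and the existence of $F'(1)$ this shows the limit defining $\delta J_p(f,f)$ exists and equals $\frac1p F'(1)$, completing the proof. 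I should double-check the edge case where $f$ is proportional to an indicator (so $\varphi^*$ could vanish on a half-line and $\log f=-\infty$ on a positive-measure set): but $\varphi\in\L_0\subset\L$ forces $\lim_{|x|\to\infty}\varphi(x)=+\infty$ and $\varphi$ proper with $J(f)>0$, and the domination argument above only uses $\varphi\ge a|\cdot|+b$, so no separate treatment is needed — the integral $\int f\log f\,dx$ is automatically finite.
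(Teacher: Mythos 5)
Your proposal is correct and follows essentially the same route as the paper: identify $f\oplus_p t\cdot_p f=e^{-(1+t)^{1/p}\varphi(\cdot/(1+t)^{1/p})}$, change variables to pull out the factor $(1+t)^{n/p}$, and differentiate the resulting one-parameter family at $t=0^+$. The only difference is in justifying the limit–integral interchange (you use dominated convergence with the bound $\varphi\geq a|x|+b$ from \eqref{claimab}, while the paper invokes the monotone convergence theorem using $\varphi\geq 0$), and both justifications are valid.
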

\begin{proof} Let $f=e^{-\varphi}\in \A_0$ such that $J(f)>0$.  It can be checked from \eqref{prodottoL}  that, for $p> 1$  and $t>0$,  $f\oplus_{p}t\cdot_p f=e^{-\phi_t}$ with   $$ \phi_t(x)=[\varphi\cdot_p (1+t)](x)=(1+t)^{\frac{1}{p}}\varphi\bigg(\frac{x}{(1+t)^{\frac{1}{p}}}\bigg).$$  Consequently,  by letting $x=(1+t)^{\frac{1}{p}}y$, one gets 
	\begin{align*}
	\delta J_{p}(f,f)  &=\lim_{t\rightarrow0^{+}}\frac{J(f\oplus_{p}t\cdot_p f)-J(f)}{t}\\
	& =\lim_{t\rightarrow 0^{+}} \frac{1}{t}  \bigg((1+t)^{\frac{n}{p}}  \int_{\R^n} e^{-(1+t)^{\frac{1}{p}}\varphi(y)}\,dy - \int_{\R^n} e^{-\varphi(y)} \,dy\bigg) \\ 
	& =\lim_{t\rightarrow 0^{+}} \frac{(1+t)^{\frac{n}{p}}-1}{t}  \int_{\R^n} e^{-(1+t)^{\frac{1}{p}}\varphi(y)}\,dy  +\lim_{t\rightarrow 0^{+}}  \int_{\R^n}  \frac{e^{-(1+t)^{\frac{1}{p}}\varphi(y)}-e^{-\varphi(y)}}{t} \,dy. \end{align*}
	Note that $\varphi\in \L_0$ is non-negative. It follows from the monotone convergence theorem that 
	\begin{eqnarray*} \lim_{t\rightarrow 0^{+}} \frac{(1+t)^{\frac{n}{p}}-1}{t}  \int_{\R^n} e^{-(1+t)^{\frac{1}{p}}\varphi(y)}\,dy=\frac{n}{p}  \lim_{t\rightarrow 0^{+}} \int_{\R^n} e^{-(1+t)^{\frac{1}{p}}\varphi(y)}\,dy =\frac{n}{p}J(f). 
	\end{eqnarray*}
	Similarly,  one can also have 
	\begin{eqnarray*} \lim_{t\rightarrow 0^{+}} \! \int_{\R^n} \! \frac{e^{-(1+t)^{\frac{1}{p}}\varphi(y)}-e^{-\varphi(y)}}{t} \,dy =\!\int_{\R^n} \lim_{t\rightarrow 0^{+}}   \frac{e^{-(1+t)^{\frac{1}{p}}\varphi(y)}-e^{-\varphi(y)}}{t} \,dy=-\frac{1}{p}\int_{\R^n}\varphi(x)e^{-\varphi(x)}\,dx.
	\end{eqnarray*}
	By \eqref{def-entropy}, one gets 
	\[ \delta J_{p}(f,f)= \frac{n}{p}J(f)   +\frac{1}{p}\int_{\R^n} f \log f  \, dx=  \frac{n}{p}J(f)   + \frac{1}{p} \mathrm{Ent}(f)+ \frac{1}{p}  J(f)\log J(f).\] 
	This is exactly the desired equality \eqref{entropu-p>1}. 
\end{proof}

Our second lemma is to extend \cite[Lemma 3.9]{CF13} for $p=1$ to all $p>1$. 

\begin{lemma}\label{boundness} Let $f=e^{-\varphi}\in \A_0$ and $g=e^{-\psi} \in\A_0$. For $p> 1$  and for $t> 0$, set \begin{equation} \label{varphi-t} \varphi_{t}= \varphi\square_{p}( \psi \cdot_p t) \end{equation} and $f_{t}=e^{-\varphi_{t}}$. Then,  for any $ x\in\mathbb{R}^n$ and $t, s\in(0,1]$ such that $s<t$, one has, 
	\[\varphi_{1}(x)\leq \varphi_{t}(x)\leq \varphi_s(x)\leq  \varphi (x)  \quad \mathrm{and} \quad  f(x)\leq f_s(x)\leq  f_{t}(x)\leq f_{1}(x).  \] \end{lemma}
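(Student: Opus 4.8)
The plan is to reduce everything to elementary monotonicity properties of the maps $\alpha\mapsto\alpha^{1/p}$ and the Fenchel conjugate, together with the normalization $\varphi^*,\psi^*\ge 0$ valid on $\A_0$. First I would record from Proposition \ref{close-Lp-A-sum} that for $t>0$ the function $\varphi_t=\varphi\square_p(\psi\cdot_p t)$ satisfies $\varphi_t=\phi_t^*$ with $\phi_t=\big[(\varphi^*)^p+t(\psi^*)^p\big]^{1/p}$, and that $\varphi_t\in\L_0$, so that $f_t=e^{-\varphi_t}\in\A_0$. The core observation is that $t\mapsto\phi_t(y)$ is, pointwise in $y$, a nondecreasing function of $t$ on $(0,\infty)$, since $\psi^*(y)\ge 0$ and $s\mapsto s^{1/p}$ is increasing; moreover $\phi_t(y)\ge\big[(\varphi^*(y))^p\big]^{1/p}=\varphi^*(y)$ for every $t>0$ because the added term $t(\psi^*(y))^p$ is nonnegative. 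Hence for $0<s<t\le 1$ we have the pointwise chain $\varphi^*\le\phi_s\le\phi_t\le\phi_1$ on all of $\R^n$.

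Next I would apply the order-reversing property of the Fenchel conjugate, namely that $u\le v$ implies $u^*\ge v^*$ (stated in the preliminaries). Taking conjugates of the chain $\varphi^*\le\phi_s\le\phi_t\le\phi_1$ gives
\[
\varphi_1=\phi_1^*\le\phi_t^*=\varphi_t\le\phi_s^*=\varphi_s\le(\varphi^*)^*.
\]
To finish the first string of inequalities it remains to identify $(\varphi^*)^*$ with $\varphi$: this is the Fenchel--Moreau theorem, applicable because $\varphi\in\L_0$ is proper, convex and lower semi-continuous. Therefore $\varphi_1(x)\le\varphi_t(x)\le\varphi_s(x)\le\varphi(x)$ for all $x\in\R^n$ and all $0<s<t\le 1$, which is the first claimed display. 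The second display then follows immediately by applying the (decreasing) exponential $e^{-\,\cdot}$ to each term, which reverses all the inequalities and yields $f(x)\le f_s(x)\le f_t(x)\le f_1(x)$, recalling $f=e^{-\varphi}$ and $f_t=e^{-\varphi_t}$.

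I do not anticipate a genuine obstacle here; the statement is essentially a bookkeeping consequence of concavity of $s\mapsto s^{1/p}$, order-reversal under conjugation, and biconjugation. The one point that needs a word of care is that monotonicity of $\phi_t$ in $t$ and the lower bound $\phi_t\ge\varphi^*$ both rely crucially on $\psi^*\ge 0$, i.e. on $g\in\A_0$ (equivalently $\psi\in\L_0$, so $\psi(o)=0$ is the minimum and $\psi^*(o)=0$, $\psi^*\ge 0$ by \eqref{non-negative-6-26}); without the $\A_0$ normalization the monotonicity would fail. It is also worth noting that the endpoint $t=1$ plays no special role --- the same argument shows $\varphi_t$ is nonincreasing in $t$ on all of $(0,\infty)$ --- but stating it on $(0,1]$ suffices for the later use in proving Theorem \ref{minkowskiinequality}.
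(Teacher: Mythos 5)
Your argument is correct and is essentially the paper's own proof: both identify $\varphi_t^*=\big[(\varphi^*)^p+t(\psi^*)^p\big]^{1/p}\geq\varphi^*$ via Proposition \ref{close-Lp-A-sum}, note monotonicity in $t$ from $\psi^*\geq 0$, and then conjugate (with biconjugation for the endpoint $\varphi$) to reverse the chain. Your version merely makes explicit the appeal to Fenchel--Moreau and the role of the $\A_0$ normalization, which the paper leaves implicit.
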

\begin{proof}
	Let $t > 0$, $\delta>0,$ and  $p\geq1$. Note that   $f=e^{-\varphi}\in \A_0$ and $g=e^{-\psi} \in\A_0$ imply $\varphi^*, \psi^*\geq 0$. By Proposition \ref{close-Lp-A-sum},  \eqref{relation-p-sq-1} (or \eqref{biconjugate-1}) and  \eqref{varphi-t}, one gets  that \begin{equation}\label{varphi-t-star}  \varphi_t^*= \big((\varphi^*)^p+t(\psi^*)^p\big)^{\frac{1}{p}}\geq \varphi^*. \end{equation}  Clearly, $\varphi_{t+\delta}^*\geq \varphi_t^*$ and $\varphi_{t+\delta}=(\varphi_{t+\delta}^*)^*\leq (\varphi_t^*)^*=\varphi_t.$  Moreover, for any $ x\in\mathbb{R}^n$ and $t\in[0,1]$,  \[
	\varphi_{1}(x)\leq \varphi_{t}(x)\leq \varphi (x)  \quad \mathrm{and} \quad  f(x)\leq f_{t}(x)\leq f_{1}(x).  \]  This completes the proof of this lemma. 
\end{proof}

The following lemma proves that $\delta J_p (f, g)$ indeed exists (although may be $+\infty$). Again we only focus on $p>1,$ as $p=1$ has been covered in \cite[Theorem 3.6] {CF13}. 
\begin{lemma}\label{diffe} Let $f, g  \in\A_0$ such that $J(f)>0$.   For $p> 1,$ one has $
	\delta J_{p}(f,g) \in[0, + \infty].$   \end{lemma}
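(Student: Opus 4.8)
The plan is to show two things: that the difference quotient is eventually monotone in $t$ as $t\to 0^+$, so the limit exists in $[-\infty,+\infty]$, and that it is bounded below by $0$, so the limit actually lies in $[0,+\infty]$. First I would invoke Corollary \ref{concave-PL-1}: the function $t\mapsto J(f\oplus_p t\cdot_p g)$ is log-concave on $(0,\infty)$, hence in particular $t\mapsto J(f\oplus_p t\cdot_p g)$ is itself concave-like enough that the difference quotient $\frac{J(f\oplus_p t\cdot_p g)-J(f)}{t}$ is monotone nonincreasing in $t$ on $(0,\infty)$. (More precisely, for a log-concave, hence continuous, function $F$ with $F(0^+)=J(f)$ — the value at $t=0$ being $J(f)$ since $f\oplus_p 0\cdot_p g = f$ by \eqref{prodottoL} and the conventions on $\cdot_p$ — concavity of $\log F$ gives concavity of $F$ on the relevant range after composing with $\exp$, or one argues directly that chords through the point $(0,J(f))$ have nonincreasing slope.) Monotonicity of the difference quotient guarantees that $\delta J_p(f,g)=\lim_{t\to 0^+}\frac{J(f\oplus_p t\cdot_p g)-J(f)}{t}$ exists as an element of $[-\infty,+\infty]$.

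Next I would rule out the value $-\infty$ and, more strongly, show the limit is $\geq 0$. For this I use Lemma \ref{boundness}: writing $\varphi_t=\varphi\square_p(\psi\cdot_p t)$ and $f_t=e^{-\varphi_t}$, we have $f_t\geq f$ pointwise on $\R^n$ for every $t>0$, because $\varphi_t\leq\varphi$. Integrating, $J(f\oplus_p t\cdot_p g)=J(f_t)\geq J(f)$ for all $t>0$, so the numerator $J(f\oplus_p t\cdot_p g)-J(f)$ is nonnegative, and therefore every difference quotient is nonnegative. Combined with the existence of the limit, this yields $\delta J_p(f,g)\in[0,+\infty]$, which is exactly the assertion.

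The only genuinely delicate point is justifying that $\delta J_p(f,g)$ is a \emph{limit} and not merely a $\liminf$ or $\limsup$, i.e. that the difference quotient is monotone. I expect this to be the main obstacle, and the cleanest route is precisely the log-concavity established in Corollary \ref{concave-PL-1}: a log-concave function on an interval is continuous on the interior and its one-sided limits at the endpoints exist, and for such a function the slope of the secant line anchored at a fixed boundary point is a monotone function of the other endpoint. One should also note that $J(f\oplus_p t\cdot_p g)$ is finite for every $t>0$ (Proposition \ref{close-Lp-A-sum} gives $f\oplus_p t\cdot_p g\in\A_0$, and the lower bound $\phi_t^*(y)\geq c|y|+d(1+t)^{1/p}$ derived there forces $J(f\oplus_p t\cdot_p g)<\infty$), so the difference quotients are all finite real numbers and the argument is not vacuous. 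Assembling these observations — finiteness of $J(f_t)$, monotonicity of the quotient from log-concavity, and nonnegativity of the numerator from Lemma \ref{boundness} — completes the proof.
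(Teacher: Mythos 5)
Your nonnegativity half is fine: Lemma \ref{boundness} gives $\varphi_t\leq\varphi$, hence $J(f\oplus_p t\cdot_p g)\geq J(f)$, so every difference quotient is $\geq 0$. The gap is in the existence of the limit. Corollary \ref{concave-PL-1} gives concavity of $\log J(f\oplus_p t\cdot_p g)$, not of $F(t)=J(f\oplus_p t\cdot_p g)$ itself; the exponential of a concave function is log-concave but in general not concave, so the slopes of chords anchored at $(0,J(f))$ need not be nonincreasing. Indeed, for $f=g=\chi_{B_1}$ one has $F(t)=V\big((1+t)^{1/p}B_1\big)=(1+t)^{n/p}V(B_1)$, whose anchored difference quotient $\frac{(1+t)^{n/p}-1}{t}V(B_1)$ is strictly \emph{increasing} when $n>p$; in general it need not be monotone in either direction, so your ``concavity of $\log F$ gives concavity of $F$ after composing with $\exp$'' step fails and with it the whole existence argument. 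A second unjustified step is $F(0^+)=J(f)$: by Lemma \ref{boundness} and monotone convergence, $\lim_{t\to0^+}J(e^{-\varphi_t})=J(e^{-\bar\varphi})$ with $\bar\varphi=\lim_{t\to0^+}\varphi_t\leq\varphi$, and this limit can be strictly larger than $J(f)$, i.e.\ $F$ may jump at $t=0$; the paper isolates this case separately (there $\delta J_p(f,g)=+\infty$).

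The repair is essentially the paper's argument: after the case analysis ($J(e^{-\bar\varphi})>J(f)$ gives $+\infty$; $J(e^{-\varphi_{t_0}})=J(f)$ for some $t_0>0$ gives $0$), in the remaining case one writes
\begin{equation*}
\frac{J(e^{-\varphi_t})-J(f)}{t}=\frac{\log J(e^{-\varphi_t})-\log J(f)}{t}\cdot\frac{J(e^{-\varphi_t})-J(f)}{\log J(e^{-\varphi_t})-\log J(f)},
\end{equation*}
where the first factor has a limit in $[0,+\infty]$ precisely because $t\mapsto\log J(e^{-\varphi_t})$ is increasing and concave (Corollary \ref{concave-PL-1} plus Lemma \ref{boundness}) — the monotone-secant property must be applied to $\log F$, not to $F$ — and the second factor tends to $J(f)$. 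Note that this is where the hypothesis $J(f)>0$ enters (one needs $\log J(f)$ finite and the second factor to converge to a positive number); your proposal never uses $J(f)>0$, which is a sign that the existence step was not actually secured.
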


\begin{proof} Let $f=e^{-\varphi}\in \A_0$ and $g=e^{-\psi} \in\A_0$ such that $J(f)>0$. Then  $\varphi, \psi \in \L_0$. Let $\varphi_{t}$ be as in   \eqref{varphi-t}.   It follows from Lemma \ref{boundness} that $\varphi(x) \geq\bar{\varphi}(x):= \lim_{t\rightarrow 0^+} \varphi_t(x)$ for every $x\in \R^n$  and $$
	J(e^{-\bar{\varphi}})=\lim_{t\rightarrow 0^+} J(e^{-\varphi_t})\geq J(e^{-\varphi}),
	$$ by the monotone convergence theorem. Note that \begin{eqnarray} \delta J_p(f, g)=
	\lim_{t\rightarrow0^{+}}\frac{J(e^{-\varphi_t})-J(e^{-\varphi})}{t}. \label{replace-1}
	\end{eqnarray} Therefore, $\delta J_p(f, g)=+\infty$  if $J(e^{-\bar{\varphi}})>J(e^{-\varphi})$,  and $\delta J_p(f, g)=0$ if $J(e^{-\varphi_{t_0}})=J(e^{-\varphi})$ for some $t_0>0$ (hence $J(e^{-\bar{\varphi}})=J(e^{-\varphi_{t}})=J(e^{-\varphi})$ for every $t\in [0,t_0]$  due to Lemma \ref{boundness}).

	Lastly, we consider the case that $J(e^{-\varphi_t})>J(e^{-\varphi})$  for all $t>0$ but  $J(e^{-\bar{\varphi}})=J(e^{-\varphi})$.  Note that $J(f)=J(e^{-\varphi})>0.$ In this case,  one has  
	\begin{eqnarray} 
	\frac{J(e^{-\varphi_{t}})-J(e^{-\varphi})}{t}=\frac{\log J(e^{-\varphi_{t}})-\log J(e^{-\varphi})}{t}\cdot\frac{J(e^{-\varphi_{t}})-J(e^{-\varphi})}{\log J(e^{-\varphi_{t}})-\log J(e^{-\varphi})}.\label{split-1}
	\end{eqnarray} Corollary  \ref{concave-PL-1} and Lemma \ref{boundness} imply that  $\log J(e^{-\varphi_{t}})$ is  an  increasing and concave function on $t\in (0, \infty)$. Thus, \begin{eqnarray} 
	J(e^{-\varphi}) =\lim_{t\rightarrow 0^{+}}\frac{J(e^{-\varphi_{t}})-J(e^{-\varphi})}{\log J(e^{-\varphi_{t}})-\log J(e^{-\varphi})}, \ \ \mathrm{and} \ \ \lim_{t\rightarrow 0^{+}}\frac{\log J(e^{-\varphi_{t}})-\log J(e^{-\varphi})}{t}\in [0,+\infty].  \label{3.2}\end{eqnarray} Combining \eqref{split-1} and \eqref{3.2}, one gets $\delta J_p(f,g)\in[0,+\infty]$, and this completes the proof. 
\end{proof}

We also need the following lemma. The case for $p=1$ has been given in \cite[Lemma 5.4]{CF13}, so we only state the result for $p>1$. 

\begin{lemma}\label{lemma35}
	Let $f=e^{-\varphi}\in \A_0$ and $g=e^{-\psi} \in\A_0$ such that $J(f)>0$.  For $p> 1$, one has, \begin{equation}\label{decompose-1} 
	\lim_{t\rightarrow0^+} \frac{J((1-t)\cdot_p f\oplus_{p} t\cdot_p g)-J(f)}{t}=\delta J_{p}(f,g)-\delta J_{p}(f,f).
	\end{equation} 
\end{lemma}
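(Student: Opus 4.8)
\textbf{Proof proposal for Lemma \ref{lemma35}.}
The plan is to reduce everything to a one-variable computation about the concave function $t\mapsto \log J$ evaluated along two families of $L_p$ Asplund sums, and then relate the relevant incremental quotients by a change-of-variables trick analogous to the one in Lemma \ref{specialcaseoffg}. Write $f=e^{-\varphi}$, $g=e^{-\psi}$ with $\varphi,\psi\in\L_0$, so $\varphi^*,\psi^*\geq 0$ with value $0$ at the origin. By Proposition \ref{close-Lp-A-sum} one has, for $t\in(0,1)$,
\[
(1-t)\cdot_p f\oplus_p t\cdot_p g=e^{-\Phi_t^*},\qquad \Phi_t=\big[(1-t)(\varphi^*)^p+t(\psi^*)^p\big]^{1/p},
\]
and likewise $f\oplus_p t\cdot_p g=e^{-\varphi_t^*}$ with $\varphi_t^*=\big[(\varphi^*)^p+t(\psi^*)^p\big]^{1/p}$ in the notation of \eqref{varphi-t}. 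The key algebraic observation is that $\Phi_t$ and $\varphi_{s}$ (for an appropriate $s=s(t)$) differ only by the scalar multiple coming from $\cdot_p$: indeed
\[
\Phi_t=(1-t)^{1/p}\Big[(\varphi^*)^p+\tfrac{t}{1-t}(\psi^*)^p\Big]^{1/p}
=(1-t)^{1/p}\,\varphi_{t/(1-t)}^*,
\]
so that, using $(\alpha\varphi)^*=\varphi^*\!\alpha$ and $(\varphi^*)^*=\varphi$, one gets $\Phi_t^* = \big((1-t)^{1/p}\varphi_{t/(1-t)}^*\big)^*$, i.e. $e^{-\Phi_t^*}=(1-t)\cdot_p\big(f\oplus_p \tfrac{t}{1-t}\cdot_p g\big)$. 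Hence, writing $s=s(t)=t/(1-t)\to 0^+$ as $t\to 0^+$ and $s\sim t$, a scaling of the integral (substitute $x=(1-t)^{1/p}y$ exactly as in the proof of Lemma \ref{specialcaseoffg}) gives
\[
J\big((1-t)\cdot_p f\oplus_p t\cdot_p g\big)=(1-t)^{n/p}\,J\big(f\oplus_p s\cdot_p g\big).
\]

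With this identity in hand the proof is a limit computation. Split the difference quotient as
\[
\frac{J((1-t)\cdot_p f\oplus_p t\cdot_p g)-J(f)}{t}
=(1-t)^{n/p}\cdot\frac{J(f\oplus_p s\cdot_p g)-J(f)}{s}\cdot\frac{s}{t}
+\frac{\big((1-t)^{n/p}-1\big)J(f)}{t}.
\]
As $t\to 0^+$ we have $(1-t)^{n/p}\to 1$, $s/t=1/(1-t)\to 1$, and $\big((1-t)^{n/p}-1\big)/t\to -n/p$. The first factor $\big(J(f\oplus_p s\cdot_p g)-J(f)\big)/s$ converges to $\delta J_p(f,g)$ by Definition \ref{variation-p-5-15-1} (and this limit exists in $[0,+\infty]$ by Lemma \ref{diffe}, so all the arithmetic with limits is legitimate, treating the $+\infty$ case separately where the left side is then also $+\infty$ and the claimed identity reads $+\infty=+\infty$ because $\delta J_p(f,f)$ is finite by Lemma \ref{specialcaseoffg}). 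Therefore the right-hand side tends to $\delta J_p(f,g)-\tfrac{n}{p}J(f)$. It remains only to identify $\tfrac{n}{p}J(f)$ with a piece of $\delta J_p(f,f)$; but Lemma \ref{specialcaseoffg} does not literally give $\delta J_p(f,f)=\tfrac{n}{p}J(f)$, so I must be slightly more careful.

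So in fact the clean route is to perform the same scaling directly on the \emph{definition} of $\delta J_p(f,g)$ rather than subtracting an $n/p$ term by hand: since $e^{-\Phi_t^*}=(1-t)\cdot_p(f\oplus_p s\cdot_p g)$, and separately $f\oplus_p t\cdot_p f=e^{-\varphi\cdot_p(1+t)}$ satisfies the scaling $J(f\oplus_p t\cdot_p f)=(1+t)^{n/p}J\big(e^{-(1+t)^{1/p}\varphi}\big)$ used in Lemma \ref{specialcaseoffg}, one writes both $J((1-t)\cdot_p f\oplus_p t\cdot_p g)$ and $J(f\oplus_p t\cdot_p g)$ in terms of scaled integrals, subtracts, and checks that the $(1-t)^{n/p}$-type prefactors cancel against the $n/p$ contribution hidden in $\delta J_p(f,f)$; concretely, adding and subtracting $J(f\oplus_p s\cdot_p g)$ gives
\[
\frac{J((1-t)\cdot_p f\oplus_p t\cdot_p g)-J(f)}{t}
=\frac{\big((1-t)^{n/p}-1\big)J(f\oplus_p s\cdot_p g)}{t}
+\frac{J(f\oplus_p s\cdot_p g)-J(f)}{t},
\]
where the first term tends to $-\tfrac{n}{p}J(f)$ (using continuity $J(f\oplus_p s\cdot_p g)\to J(f)$, which follows from Lemma \ref{boundness} and monotone convergence) and the second tends to $\delta J_p(f,g)$ (again using $s/t\to 1$). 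Comparing with the expression for $\delta J_p(f,f)$ from Lemma \ref{specialcaseoffg}, namely $\delta J_p(f,f)=\tfrac{n}{p}J(f)+\tfrac1p\int f\log f\,dx$, is \emph{not} what closes the argument — rather, one observes directly that applying the identical scaling reasoning with $g=f$ yields $\lim_{t\to0^+}\big(J((1-t)\cdot_p f\oplus_p t\cdot_p f)-J(f)\big)/t = \delta J_p(f,f)-\tfrac{n}{p}J(f)$, but $(1-t)\cdot_p f\oplus_p t\cdot_p f = e^{-\varphi\cdot_p 1}=f$, so that left side is $0$, forcing $\tfrac{n}{p}J(f)=\delta J_p(f,f)$ — wait, this contradicts Lemma \ref{specialcaseoffg}, so the resolution must be that the $f=f$ scaling is $(1-t)^{n/p}(1+s)^{n/p}=1$ type and the two prefactor contributions are $-\tfrac np J(f)$ and $+\delta J_p(f,f)$ only when one does \emph{not} further rescale the inner integrand. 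The safest exposition, and the one I would write, keeps the inner integrand fixed: from $J((1-t)\cdot_p f\oplus_p t\cdot_p g)=(1-t)^{n/p}J(f\oplus_p s\cdot_p g)$ one gets the displayed two-term split above, and from it
\[
\lim_{t\to0^+}\frac{J((1-t)\cdot_p f\oplus_p t\cdot_p g)-J(f)}{t}=\delta J_p(f,g)-\frac{n}{p}J(f),
\]
and then the \emph{same} computation with $g$ replaced by $f$ (where the left side is identically $0$) gives $0=\delta J_p(f,f)-\tfrac{n}{p}J(f)$; since this is false by Lemma \ref{specialcaseoffg}, the correct bookkeeping must instead isolate the term $\tfrac1p\int f\log f\,dx$ — which tells me the genuine main obstacle is precisely this: the scaling identity $J((1-t)\cdot_p f\oplus_p t\cdot_p g)=(1-t)^{n/p}J(f\oplus_p s\cdot_p g)$ holds with $s=t/(1-t)$, and one must track \emph{both} the $(1-t)^{n/p}$ factor \emph{and} the reparametrization $s(t)$ carefully, because $s(t)=t+t^2+O(t^3)$ contributes at first order through the concavity/finiteness of $\log J$ along the ray, and it is exactly the interplay of these that converts $\delta J_p(f,g)$ on the one side into $\delta J_p(f,g)-\delta J_p(f,f)$ on the other. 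Thus I would devote the bulk of the written proof to establishing the scaling identity rigorously via Proposition \ref{close-Lp-A-sum} and the substitution $x=(1-t)^{1/p}y$, and then to justifying passage to the limit in the two-term split using Lemma \ref{diffe} (existence of $\delta J_p$ in $[0,+\infty]$), Lemma \ref{boundness} and monotone convergence (continuity $J(f\oplus_p s\cdot_p g)\to J(f)$), and Lemma \ref{specialcaseoffg} (finiteness of $\delta J_p(f,f)$, so that the $+\infty$ case is handled consistently).
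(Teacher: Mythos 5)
Your argument breaks down at the scaling identity, which is false as stated. From $\phi_t=(1-t)^{1/p}\varphi_s^*$ with $s=t/(1-t)$ and the rule $(\alpha h)^*(x)=\alpha h^*(x/\alpha)$ one gets $\phi_t^*(x)=(1-t)^{1/p}\,\varphi_s\big(x/(1-t)^{1/p}\big)$, so the substitution $x=(1-t)^{1/p}y$ yields
\begin{equation*}
J\big((1-t)\cdot_p f\oplus_p t\cdot_p g\big)=(1-t)^{n/p}\int_{\R^n}e^{-(1-t)^{1/p}\varphi_s(y)}\,dy=(1+s)^{-n/p}\int_{\R^n}e^{-(1+s)^{-1/p}\varphi_s(y)}\,dy,
\end{equation*}
and \emph{not} $(1-t)^{n/p}J(f\oplus_p s\cdot_p g)$: a factor $(1-t)^{1/p}=(1+s)^{-1/p}$ survives inside the exponent. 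That surviving factor is precisely what produces the missing first-order term $\frac1p\int_{\R^n}\varphi e^{-\varphi}\,dx=-\frac1p\int_{\R^n}f\log f\,dx$, which combined with the $-\frac np J(f)$ coming from the prefactor reconstitutes $-\delta J_p(f,f)$ via Lemma \ref{specialcaseoffg}. This is exactly how the paper argues: it splits the difference quotient into three pieces, $B_1$ from $(1+s)^{-n/p}-1$ (giving $-\frac np J(e^{-\bar{\varphi}})$), $B_2$ from $e^{-\varphi_s}-e^{-\bar{\varphi}}$ (giving $\delta J_p(f,g)$), and $B_3$ from $e^{-(1+s)^{-1/p}\varphi_s}-e^{-\varphi_s}$ (giving $\frac1p\int\bar{\varphi}e^{-\bar{\varphi}}$, justified by a dominated-convergence argument with majorant $M e^{-2^{-1/p}\varphi_1}$). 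You noticed the inconsistency yourself — your $g=f$ sanity check forces $\delta J_p(f,f)=\frac np J(f)$, contradicting Lemma \ref{specialcaseoffg} — but your diagnosis is wrong and the proof is never repaired: the reparametrization $s=t/(1-t)$ is harmless since $s/t\to 1$ (it contributes only at second order); the culprit is the dropped exponent factor, and without the $B_3$-type term your two-term split cannot give \eqref{decompose-1}.

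A second, smaller flaw: you claim $J(f\oplus_p s\cdot_p g)\to J(f)$ by Lemma \ref{boundness} and monotone convergence. Monotone convergence gives $J(e^{-\varphi_s})\to J(e^{-\bar{\varphi}})$ with $\bar{\varphi}=\lim_{s\to0^+}\varphi_s\leq\varphi$, and this limit can strictly exceed $J(f)$; in that case $\delta J_p(f,g)=+\infty$ and both sides of \eqref{decompose-1} are $+\infty$, which the paper handles as a separate case, while in the remaining case $J(e^{-\bar{\varphi}})=J(e^{-\varphi})$ it replaces $\varphi$ by $\bar{\varphi}$ (using $\varphi=\bar{\varphi}$ a.e., hence $\delta J_p(f,f)=\delta J_p(e^{-\bar{\varphi}},e^{-\bar{\varphi}})$) before carrying out the $B_1,B_2,B_3$ analysis. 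Your write-up glosses over this dichotomy as well, so even the parts of the limit computation you do carry out are not fully justified.
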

\begin{proof}  Let $f=e^{-\varphi}\in \A_0$ and $g=e^{-\psi} \in\A_0$ such that $J(f)>0$.  According to Proposition \ref{close-Lp-A-sum}, for $p> 1$  and $t\in(0,1)$,   $(1-t)\cdot_p f\oplus_{p}t\cdot_p g=e^{-\phi_t^*}$ where $$\phi_t= \big((1-t)(\varphi^*)^p+t(\psi^*)^p\big)^{\frac{1}{p}}= (1-t) ^{\frac{1}{p}} \Big((\varphi^*)^p+\frac{t}{1-t} (\psi^*)^p\Big)^{\frac{1}{p}}.$$
	It can be checked by \eqref{const-mul-dual} that \begin{eqnarray*} \phi_t ^*(x) =(1-t)^{\frac{1}{p}}\Big[\big((\varphi^*)^p+\frac{t}{1-t}(\psi^*)^p \big)^{\frac{1}{p}}\Big]^*\Big(\frac{x}{(1-t)^{1/p}}\Big).
	\end{eqnarray*} By letting $x=(1-t)^{\frac{1}{p}} y$ and  $s=\frac{t}{1-t}$, one gets  $1-t=\frac{1}{1+s}$ and 
	\begin{align*}
	J((1-t)\cdot_p f\oplus_{p}t\cdot_p g) 
	& =  \int_{\R^n}  (1-t)^{\frac{n}{p}} e^{-(1-t)^{\frac{1}{p}}\big(\big((\varphi^*)^p+\frac{t}{1-t}(\psi^*)^p\big)^{\frac{1}{p}}\big)^*(y)}\,dy \\ & =  \int_{\R^n}  (1+s)^{-\frac{n}{p}} e^{-(1+s)^{-\frac{1}{p}}\big(\big((\varphi^*)^p +s (\psi^*)^p \big)^{\frac{1}{p}}\big)^*(y) }\,dy\\ &=  \int_{\R^n}  (1+s)^{-\frac{n}{p}} e^{-(1+s)^{-\frac{1}{p}} \varphi_s(y) }\,dy, \end{align*} 
	where  $\varphi_{s}$ is given by  \eqref{varphi-t}.     Like in the proof of Lemma \ref{diffe},  let $\bar{\varphi}(x)= \lim_{s\rightarrow 0^+} \varphi_s(x)$ for every $x\in \R^n$. By Lemma \ref{boundness} and  the monotone convergence theorem,  one has $\varphi \geq \bar{\varphi}$ and $J(e^{-\bar{\varphi}})=\lim_{s\rightarrow 0^+} J(e^{-\varphi_s})\geq J(e^{-\varphi}).$  Lemma \ref{boundness} also implies that $(1+s)^{-\frac{1}{p}} \varphi_s(y)$  is decreasing for all $y\in \R^n$ with $\lim_{s\to 0^+} (1+s)^{-\frac{1}{p}} \varphi_s(y)=\bar{\varphi}(y)\leq \varphi (y).$ It follows from the  monotone convergence theorem that, $$\lim_{s\rightarrow 0^+}  \int_{\R^n}  (1+s)^{-\frac{n}{p}} e^{-(1+s)^{-\frac{1}{p}} \varphi_s(y)}  \,dy=  \lim_{s\rightarrow 0^+}  \int_{\R^n}   e^{-(1+s)^{-\frac{1}{p}} \varphi_s(y)}  \,dy =J(e^{-\bar{\varphi}}). $$

	In summary, due to $s=\frac{t}{1-t}$,  one has  \begin{align}
	\lim_{t\rightarrow 0^+}  &\frac{J((1-t)\cdot_p f\oplus_{p}t\cdot_p g)-J(f)}{t}\nonumber  \\
	&=\lim_{s\rightarrow 0^+} \frac{1+s}{s}  \int_{\R^n}   \Big((1+s)^{-\frac{n}{p}} e^{-(1+s)^{-\frac{1}{p}} \varphi_s(y)}-e^{-\varphi(y)}\Big)\,dy \nonumber \\ &=\lim_{s\rightarrow 0^+}  \int_{\R^n} \frac{ (1+s)^{-\frac{n}{p}} e^{-(1+s)^{-\frac{1}{p}} \varphi_s(y)}-e^{-\bar{\varphi}}(y)}{s} \,dy +\lim_{s\rightarrow 0^+}  \frac{J(e^{-\bar{\varphi}})-J(e^{-\varphi})}{s}.\label{chang-variable-1}
	\end{align}
	Clearly,  the second limit in \eqref{chang-variable-1} equals to $+\infty$ if $J(e^{-\bar{\varphi}})>J(e^{-\varphi})$. Note that, in this case, $\delta J_p(f, g)=+\infty$ as proved in Lemma \ref{diffe}, and this proves \eqref{decompose-1} if $J(e^{-\bar{\varphi}})>J(e^{-\varphi})$. 
	
	Now let us consider the case $J(e^{-\bar{\varphi}})=J(e^{-\varphi})$. As $\varphi \geq \bar{\varphi}$, one has  $e^{-\varphi(x)}= e^{-\bar{\varphi}(x)}$ (and hence $\varphi(x)=\bar{\varphi}(x)$) for almost all $x\in \R^n$. According to \eqref{def-entropy} and \eqref{mink1}, one sees  \begin{equation} \delta J_p(f, f)=\delta J_p(e^{-\bar{\varphi}}, e^{-\bar{\varphi}}).\label{equality-entropy} \end{equation}  Besides,  \eqref{replace-1} implies that  \begin{eqnarray} \delta J_p(f, g)=
	\lim_{t\rightarrow0^{+}}\frac{J(e^{-\varphi_t})-J(e^{-\varphi})}{t}=
	\lim_{t\rightarrow0^{+}}\frac{J(e^{-\varphi_t})-J(e^{-\bar{\varphi}})}{t}.  \label{replace-5-17}
	\end{eqnarray} Replacing $\varphi$ by $\bar{\varphi}$ in \eqref{chang-variable-1}, one has  \begin{align} 
	\lim_{s\rightarrow 0^+}  \int_{\R^n} \frac{ (1+s)^{-\frac{n}{p}} e^{-(1+s)^{-\frac{1}{p}} \varphi_s(y)}-e^{-\varphi(y)}}{s}\,dy &= \lim_{s\rightarrow 0^+}  \int_{\R^n} \frac{ (1+s)^{-\frac{n}{p}} e^{-(1+s)^{-\frac{1}{p}} \varphi_s(y)}-e^{-\bar{\varphi}(y)}}{s}\,dy \nonumber \\ &= B_1+B_2+B_3.\label{b1-b2-b3}
	\end{align} Here $B_1$ is defined and calculated as below: 
	\begin{align*}B_1&=  \lim_{s\rightarrow 0^+}  \int_{\R^n} \frac{ (1+s)^{-\frac{n}{p}} e^{-(1+s)^{-\frac{1}{p}} \varphi_s(y)}-e^{-(1+s)^{-\frac{1}{p}} \varphi_s(y)}}{s}\,dy \nonumber \\ &=  \bigg(\lim_{s\rightarrow 0^+}   \frac{ (1+s)^{-\frac{n}{p}} -1}{s} \bigg) \bigg( \lim_{s\rightarrow 0^+}   \int_{\R^n}  e^{-(1+s)^{-\frac{1}{p}} \varphi_s(y)} \,dy \bigg)   
	=  -\frac{n}{p} J\big(e^{- \bar{\varphi}}\big),  \end{align*}  where  Lemma \ref{boundness} and the monotone convergence theorem are used. It follows from \eqref{replace-5-17} that  
	\begin{align*}B_2&=  \lim_{s\rightarrow 0^+}  \int_{\R^n} \frac{e^{-\varphi_s(y)}-e^{-\bar{\varphi}(y)}}{s}\,dy 
	=\delta J_p(f, g).   \end{align*} The term $B_3$ is defined and calculated as follows:  \begin{align} B_3&=  \lim_{s\rightarrow 0^+}  \int_{\R^n} \frac{e^{-(1+s)^{-\frac{1}{p}} \varphi_s(y)}-e^{-\varphi_s(y)}}{s}\,dy=\frac{1}{p} \int_{\R^n} \bar{\varphi}(y) e^{-\bar{\varphi}(y)}\,dy. \label{estimation-B3}  \end{align} Indeed, \eqref{estimation-B3} is a consequence of the dominated convergence theorem and we now provide some details to complete the argument.  Clearly,  the integral of the second term in \eqref{estimation-B3} is actually over the domain of $\varphi_s$,  since  $(1+s)^{-\frac{1}{p}} \varphi_s(y)=+\infty$ and $\varphi_s(y)=+\infty$  if $y\notin  \dom (\varphi_s)$.  
	Note that $0\leq \varphi_s(y)<\infty$ for any $y\in \dom (\varphi_s)$. Moreover,  $1-(1+s)^{-\frac{1}{p}}$ is increasing on $s\in (0, 1)$ and $$\lim_{s\rightarrow 0^+}  \frac{1-(1+s)^{-\frac{1}{p}}}{s} =\frac{1}{p},$$ which implies the existence of a finite constant $M<\infty$ such that for all $s\in (0, 1)$,  $$0< \frac{1-(1+s)^{-\frac{1}{p}}}{s}<M\big(1-2^{-\frac{1}{p}}\big). $$ As the function $\frac{e^x-1}{x}$ is increasing on $x\in (0, \infty)$, one gets, for all $s\in (0, 1)$ and for all $y\in \dom (\varphi_s)$,  \begin{align*} 
	\frac{e^{\big(1-(1+s)^{-\frac{1}{p}} \big) \varphi_s(y)}-1}{\big(1-(1+s)^{-\frac{1}{p}} \big) \varphi_s(y)}\leq \frac{e^{\big(1-2^{-\frac{1}{p}} \big) \varphi_s(y)}-1}{\big(1-2^{-\frac{1}{p}} \big) \varphi_s(y)}. 
	\end{align*} By Lemma \ref{boundness}, for all $s\in (0, 1)$ and for any $y\in \dom (\varphi_s)$,  
	\begin{align*} \frac{e^{-(1+s)^{-\frac{1}{p}} \varphi_s(y)}-e^{-\varphi_s(y)}}{s} &=\Bigg(\frac{e^{-\varphi_s(y)}\big(1-(1+s)^{-\frac{1}{p}} \big) \varphi_s(y)}{s}\Bigg)  \Bigg( \frac{e^{\big(1-(1+s)^{-\frac{1}{p}} \big) \varphi_s(y)}-1}{\big(1-(1+s)^{-\frac{1}{p}} \big) \varphi_s(y)}\Bigg) \nonumber \\ & \leq M  e^{-\varphi_s(y)}    \Big(  {e^{\big(1-2^{-\frac{1}{p}} \big) \varphi_s(y)}-1} \Big) \nonumber \\  &\leq M  e^{ -2^{-\frac{1}{p}} \varphi_s(y)}  \leq  M  e^{ -2^{-\frac{1}{p}} \varphi_1(y)}.\end{align*} It is easily checked by  Proposition \ref{close-Lp-A-sum} that $e^{ -2^{-\frac{1}{p}} \varphi_1}\in \A_0,$ and hence $\int_{\R^n}e^{ -2^{-\frac{1}{p}} \varphi_1(y)}\,dy<+\infty.$ 
	
	For convenience, let $\bar{\varphi}(y) e^{-\bar{\varphi}(y)}=0$ if $y\notin\dom(\bar{\varphi})$. 
	By Lemma \ref{boundness},   $\dom(\bar{\varphi})\subseteq \dom(\varphi_s)\subseteq \dom(\varphi_t)$ holds for $t, s\in(0,1]$ such that $s<t$.  Thus, if there exists $s_0>0$ such that $y\notin \dom(\varphi_{s_0})$, then $y\notin \dom(\varphi_s)$ for all $s\in (0, s_0]$ and $y\notin\dom(\bar{\varphi})$; this in turn implies that $$\lim_{s\rightarrow 0^+}    \frac{e^{-(1+s)^{-\frac{1}{p}} \varphi_s(y)}-e^{-\varphi_s(y)}}{s}=0=\frac{1}{p} \bar{\varphi}(y) e^{-\bar{\varphi}(y)}.$$ On the other hand, if $y\in \dom(\varphi_s)$  for any $s\in (0, 1)$, then 
	\begin{align*} \lim_{s\rightarrow 0^+}    \frac{e^{-(1+s)^{-\frac{1}{p}} \varphi_s(y)}-e^{-\varphi_s(y)}}{s}
	&=\left(\lim_{s\rightarrow 0^+}   e^{-\varphi_s(y)}\right)  \left(\lim_{s\rightarrow 0^+}  \frac{e^{\big(1-(1+s)^{-\frac{1}{p}}\big) \varphi_s(y)}-1}{s} \right) 	=\frac{1}{p}\bar{\varphi}(y) e^{-\bar{\varphi}(y)}.\end{align*}  Hence, the dominated convergence theorem can be applied to $B_3$ and get \eqref{estimation-B3}.  
	
	Summing up $B_1, B_2$ and $B_3$, by  \eqref{chang-variable-1},  \eqref{equality-entropy}, \eqref{b1-b2-b3} and Lemma \ref{specialcaseoffg}, one gets \begin{align*}
	\lim_{t\rightarrow 0^+}  \frac{J((1-t)\cdot_p f\oplus_{p}t\cdot_p g)-J(f)}{t} 
	&=B_1+B_2+B_3=\delta J_{p}(f,g)-\delta J_{p}(f,f).
	\end{align*} Hence, formula \eqref{decompose-1}  is obtained. This completes the proof of Lemma \ref{lemma35}.  \end{proof} 

We are now ready to prove our Theorem \ref{minkowskiinequality}. 
\begin{proof}[Proof of Theorem \ref{minkowskiinequality}]  Let $f=e^{-\varphi}\in \A_0$ and $g=e^{-\psi}\in\A_0$ such that $J(f)>0$ and $J(g)>0$. For $p>1$,  let $F(t)=\log J((1-t)\cdot_p f\oplus_{p} t\cdot_p g)$ for $t\in (0, 1)$. Let $$F(0+)=\lim_{t\to 0^+}F(t) \ \ \mathrm{and}\ \ F(1-)=\lim_{t\to 1^-}F(t).$$  Note that $F(0+)\geq \log J(f)$ and $F(1-)\geq \log J(g)$.  According to Corollary \ref{concave-PL-1}, $F$ is a concave function on $t\in (0, 1)$ and thus $$F(t) \geq (1-t)F(0+)+tF(1-)\geq \log J(f)+t(\log J(g)-\log J(f))$$ holds for $t\in (0, 1)$.   Consequently, if $F(0+)>\log J(f)$, then 
$$  \lim_{t\rightarrow0^+} \frac{J((1-t)\cdot_p f\oplus_{p} t\cdot_p g)-J(f)}{t} =+\infty > J(f)\log\Big(\frac{J(g)}{J(f)}\Big).$$ While if $F(0+)=\log J(f)$, then  \begin{eqnarray}   \lim_{t\rightarrow0^+} \frac{J((1-t)\cdot_p f\oplus_{p} t\cdot_p g)-J(f)}{t} =\frac{\,d e^{F(t)}}{\,dt}\bigg|_{t=0^+}\!\!  =e^{F(t)} \frac{\,d F(t)}{\,dt}\bigg|_{t=0^+}\geq J(f)\log\Big(\frac{J(g)}{J(f)}\Big).\label{derivative--1} \end{eqnarray}
	Lemmas   \ref{specialcaseoffg} and  \ref{lemma35} then  yield the desired inequality \eqref{mink1} as follows:
	\begin{align}  \delta J_{p}(f,g)&= \delta J_{p}(f,f) + \lim_{t\rightarrow0^+} \frac{J((1-t)\cdot_p f\oplus_{p} t\cdot_p g)-J(f)}{t} \nonumber \\ & \geq  \delta J_{p}(f,f)+J(f)\log\Big(\frac{J(g)}{J(f)}\Big) \label{ine-with-d-ff}  \\  
	&=  J(f)\left[ \frac{n}{p} +\frac{1-p}{p}\log J(f)+\log J(g)\right]+\frac{1}{p}{\rm Ent}(f). \nonumber\end{align}  
	
	Now let us characterize the equality for \eqref{mink1}. It is obvious that \eqref{mink1} becomes equality if $f=g$ by  Lemma \ref{specialcaseoffg}.  Conversely, assume that (\ref{mink1}) holds with equality sign which happens only in the case $F(0+)=\log J(f)$; it requires equality in \eqref{derivative--1}. In particular, as $F(0+)= \log J(f)$, one has 
	$$ F'(0^{+})=\frac{\,d F(t)}{\,dt}\bigg|_{t=0^+} = \log\Big(\frac{J(g)}{J(f)}\Big).$$ Note that $F(t)\leq F(0+)+t F'(0+)$ for all $t\in (0, 1)$ since $F$ is concave on $(0, 1)$. This gives $$F(t) \leq \log J(f)+t(\log J(g)-\log J(f)).$$ Consequently, equality holds in the Pr\'ekopa-Leindler type
	inequality (\ref{prekopainequality1}) and then $f=g$.
\end{proof}

The following corollary provides  a unique determination of log-concave functions. 	
\begin{corollary}\label{cormink-uniq}  Let $f_1, f_2 \in\A_0$ such that $J(f_1)=J(f_2)>0$. For $p>1$, if \begin{equation}\label{incroci-uniq}
	\delta J_p (f_1, g) = \delta J_p (f_2, g) \end{equation} holds for any $g\in \A_0$ with $J(g)>0$, then $f_1=f_2$.  \end{corollary}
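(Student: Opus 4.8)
The plan is to exploit the Minkowski type inequality from Theorem \ref{minkowskiinequality} together with its equality characterization. First I would apply \eqref{mink1} with $f=f_1$ and the test function $g=f_2$: since $J(f_1)>0$ and $J(f_2)>0$, this yields
\[
\delta J_p(f_1,f_2)\;\ge\; J(f_1)\Big[\tfrac{n}{p}+\tfrac{1-p}{p}\log J(f_1)+\log J(f_2)\Big]+\tfrac{1}{p}\,\mathrm{Ent}(f_1).
\]
Because $J(f_1)=J(f_2)$, the bracketed expression collapses to $\tfrac{n}{p}+\tfrac{1}{p}\log J(f_1)$, and comparing with the formula for $\delta J_p(f_1,f_1)$ in Lemma \ref{specialcaseoffg} (namely $\delta J_p(f_1,f_1)=\tfrac{n+\log J(f_1)}{p}J(f_1)+\tfrac{1}{p}\mathrm{Ent}(f_1)$), the right-hand side is exactly $\delta J_p(f_1,f_1)$. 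Hence $\delta J_p(f_1,f_2)\ge \delta J_p(f_1,f_1)$.

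Next I would use the hypothesis \eqref{incroci-uniq} with $g=f_2$, which is admissible since $J(f_2)>0$. It gives $\delta J_p(f_1,f_2)=\delta J_p(f_2,f_2)$. But by Lemma \ref{specialcaseoffg} and $J(f_1)=J(f_2)$ together with the fact that equal total masses force equal entropies only if... — more carefully: $\delta J_p(f_2,f_2)=\tfrac{n+\log J(f_2)}{p}J(f_2)+\tfrac{1}{p}\mathrm{Ent}(f_2)$, whereas the value $\delta J_p(f_1,f_1)$ computed above involves $\mathrm{Ent}(f_1)$. So a direct comparison of $\delta J_p(f_1,f_2)$ with $\delta J_p(f_1,f_1)$ is cleaner: from the two displays we obtain $\delta J_p(f_1,f_1)\le \delta J_p(f_1,f_2)=\delta J_p(f_2,f_2)$, and symmetrically (swapping the roles of $f_1$ and $f_2$, applying \eqref{mink1} with $f=f_2$, $g=f_1$, and using \eqref{incroci-uniq} again) $\delta J_p(f_2,f_2)\le \delta J_p(f_2,f_1)=\delta J_p(f_1,f_1)$. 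Chaining these gives equality throughout, in particular equality in the instance of \eqref{mink1} applied with $f=f_1$, $g=f_2$.

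Finally, the equality case of Theorem \ref{minkowskiinequality} states that equality in \eqref{mink1} holds if and only if $f=g$; applied to our situation this forces $f_1=f_2$, which is the conclusion. The only point requiring a little care is checking that the constant terms match up exactly so that the equality instance of \eqref{mink1} really is triggered — that is the bookkeeping with $J(f_1)=J(f_2)$ and the formula from Lemma \ref{specialcaseoffg}, and it is purely arithmetic. I expect no genuine obstacle here: the corollary is a formal consequence of the sharp inequality plus its equality characterization, and the main (very minor) subtlety is simply organizing the symmetric chain of inequalities so that both $\le$ directions combine to pin down equality in \eqref{mink1}.
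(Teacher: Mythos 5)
Your argument is correct and follows essentially the same route as the paper: you apply the hypothesis \eqref{incroci-uniq} with $g=f_1$ and $g=f_2$, use Theorem \ref{minkowskiinequality} twice (in the simplified form $\delta J_p(f,g)\geq \delta J_p(f,f)$ when $J(f)=J(g)$, which the paper cites directly via \eqref{ine-with-d-ff} rather than re-deriving it from Lemma \ref{specialcaseoffg}), chain the resulting inequalities into equality, and conclude from the equality case of \eqref{mink1}. No gaps; the bookkeeping you flag is exactly the arithmetic the paper performs.
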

\begin{proof} By letting $g=f_1$ in \eqref{incroci-uniq}, it follows from Theorem \ref{minkowskiinequality}
	(or \eqref{ine-with-d-ff})   and $J(f_1)=J(f_2)$  that \begin{eqnarray}\label{incroci-uniq-11}
	\delta J_p (f_1, f_1) = \delta J_p (f_2, f_1) \geq \delta J_{p}(f_2, f_2)+J(f_2)\log\Big(\frac{J(f_1)}{J(f_2)}\Big) 
	=\delta J_{p}(f_2, f_2)
	\end{eqnarray} with equality if and only if $f_1=f_2$. Similarly, \begin{equation}\label{incroci-uniq-22}
	\delta J_p (f_2, f_2) = \delta J_p (f_1, f_2) \geq \delta J_{p}(f_1, f_1)+J(f_1)\log\Big(\frac{J(f_2)}{J(f_1)}\Big)=\delta J_{p}(f_1, f_1).\end{equation}   This means that  \eqref{incroci-uniq-11} holds with equality, which in turn gives $f_1=f_2$ as desired.
\end{proof}

\section{An explicit formula for $\delta J_p(f, g)$ }   \label{section-varition-516} 	\setcounter{equation}{0} 
Our goal in this section is to obtain an explicit integral formula for $\delta J_p(f, g)$ for $p>1$ under additional conditions on $f$ and $g$. Again the case $p=1$ has been discussed in \cite{CF13} and hence will not be covered in this section. We shall need the subclass  $\A'_0\subset \A_0$ where $\A_0'=\{f=e^{-\varphi}: \varphi \in \L'_0\}$ 
with $\L_0'\subset\L_0$ given by  
\[\L'_0:=\Big\{\varphi\in\L_0: \varphi \in \C^1(\R^n)\cap\C^2_+(\R^n\setminus\{o\}) \ \mathrm{is\ strictly\ convex\ and \ supercoercive\ with}\ \dom(\varphi)=\R^n\Big\}. 
\]  Hereafter, a function $\varphi$ is called supercoercive if  $\lim_{|x|\rightarrow\infty} \frac{\varphi(x)}{|x|}=+\infty.$  
It is easily checked that $(\R^n, \varphi)$ for $\varphi\in \L'_0$ is a pair satisfying that $\varphi$ is differentiable and strictly convex on $\R^n$, and  \begin{equation} \label{ess-smooth} \lim_{i\to\infty} |\nabla \varphi(x_i)|\rightarrow+\infty\ \ \mathrm{for\ each\ sequence }\  \ \{x_i\}_{i\in\mathbb{N}}\subset \R^n \ \ \mathrm{such\ that}\  \lim_{i\to \infty} |x_i|=+\infty.  \end{equation}   This pair usually is called {\it a convex function of Legendre type} (see e.g. \cite[Section 26]{Roc70} for more general definitions and properties). In general, \eqref{ess-smooth} holds automatically for $\varphi$  if $\dom(\varphi^*)=\R^n$ and $\varphi$ is a differentiable convex function with $\dom(\varphi)=\R^n$, due to  \cite[Lemma 26.7]{Roc70}. 

We say that $(D, \psi)$  is  the \emph{Legendre conjugate} of $(C, \varphi)$ if
$$
\psi(y)=\langle x,y\rangle-\varphi(x),\quad \mathrm{for\ any}\  y\in D\ \mathrm{and \ for \ any }\ x \in \nabla \varphi^{-1}(y)=\{z\in C: \ y= \nabla \varphi(z)\}, 
$$ where $D=\{y\in \R^n: y=\nabla \varphi(x), \  x\in C\}$. Theorem 26.5 in  \cite{Roc70}  provides a nice result regarding the relation between the Legendre conjugate and Fenchel conjugate. We shall not need the full statement of  \cite[Theorem 26.5]{Roc70}, and only the special cases, when both domains are $\R^n$, will be stated in the following lemma. 

\begin{lemma}\label{proplegendre} Let $\phi\in \C^1(\R^n)$ be  such that $ \dom(\phi^*)=\R^n$.  Then $(\R^n, \phi)$ is a convex function of Legendre type if and only if $(\R^n, \phi^*)$ is. When these conditions hold, $(\R^n, \phi ^*)$ is the Legendre conjugate of $(\R^n, \phi)$ (and vice verse). Moreover, both $\nabla \phi: \R^n \to \R^n$  and $\nabla \phi^*:\R^n\to \R^n$ are continuous bijections and satisfy that $\nabla \phi ^*$ is the inverse of $\nabla \phi$ (namely,  $\nabla ^{-1}  \phi= \nabla \phi ^*$).\end{lemma}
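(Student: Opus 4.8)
The final statement is Lemma~\ref{proplegendre}, which is explicitly described in the text as a restriction of \cite[Theorem 26.5]{Roc70} to the case where both domains equal $\R^n$. So my plan is essentially to quote and specialize Rockafellar's theorem, supplying the bridge between the ``convex function of Legendre type'' terminology used here (a differentiable, strictly convex, essentially smooth function, where essential smoothness in the full-domain case is the blow-up condition \eqref{ess-smooth}) and Rockafellar's own definitions.

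The plan is as follows. First I would recall from \cite[Section~26]{Roc70} that a proper convex function $\phi$ is of Legendre type precisely when $\mathrm{int}(\dom(\phi))\neq\emptyset$, $\phi$ is differentiable and strictly convex on $\mathrm{int}(\dom(\phi))$, and $\phi$ is essentially smooth; and that \cite[Theorem~26.5]{Roc70} states that the Fenchel conjugate $\phi^*$ of a Legendre-type function is again of Legendre type, that $(\mathrm{int}(\dom(\phi^*)),\phi^*)$ is the Legendre conjugate of $(\mathrm{int}(\dom(\phi)),\phi)$, that $\nabla\phi$ is a homeomorphism from $\mathrm{int}(\dom(\phi))$ onto $\mathrm{int}(\dom(\phi^*))$ with inverse $\nabla\phi^*$, and that the construction is involutive. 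Then I would specialize: by hypothesis $\phi\in\C^1(\R^n)$ with $\dom(\phi)=\R^n$, so $\mathrm{int}(\dom(\phi))=\R^n$ and $\phi$ is already differentiable on all of $\R^n$; and the hypothesis $\dom(\phi^*)=\R^n$ is exactly what makes both ``interiors'' in Rockafellar's statement equal to $\R^n$, so the Legendre conjugate and Fenchel conjugate coincide on the whole space. For the ``of Legendre type'' part I would invoke \cite[Lemma~26.7]{Roc70}, quoted right before the lemma in the excerpt: when $\dom(\phi)=\R^n$ and $\dom(\phi^*)=\R^n$, essential smoothness of $\phi$ holds automatically (this is condition \eqref{ess-smooth}), and dually for $\phi^*$; so $(\R^n,\phi)$ is of Legendre type iff it is strictly convex iff (by the involution and \cite[Theorem~26.5]{Roc70}) $(\R^n,\phi^*)$ is of Legendre type.

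Concretely the steps in order: (1) translate the hypothesis into Rockafellar's language, noting $\mathrm{int}(\dom(\phi))=\mathrm{int}(\dom(\phi^*))=\R^n$; (2) apply \cite[Lemma~26.7]{Roc70} to get essential smoothness of $\phi$ and of $\phi^*$ for free, so that ``Legendre type'' reduces to strict convexity plus differentiability, and use \cite[Theorem~26.5]{Roc70} (strict convexity of $\phi$ on $\mathrm{int}(\dom \phi)$ is equivalent to differentiability of $\phi^*$ on $\mathrm{int}(\dom\phi^*)$, together with essential smoothness) to conclude the ``if and only if''; (3) read off from \cite[Theorem~26.5]{Roc70} that $\phi^*$ is then the Legendre conjugate of $\phi$ (and by the symmetry/involution, vice versa); (4) read off that $\nabla\phi\colon\R^n\to\R^n$ and $\nabla\phi^*\colon\R^n\to\R^n$ are continuous, are bijections onto each other's domains (here both $\R^n$), and are mutually inverse, i.e.\ $\nabla^{-1}\phi=\nabla\phi^*$. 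I would also remark that continuity of the gradients follows either from \cite[Theorem~25.5]{Roc70} (a convex function differentiable on an open set is $C^1$ there) or directly from $\phi\in\C^1(\R^n)$, and the continuity of $\nabla\phi^*$ is then its bijective inverse being a homeomorphism by Rockafellar's theorem.

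There is no genuine obstacle here, since the statement is a citation specialized to a convenient case; the only point requiring a little care is making sure that the paper's definition of ``convex function of Legendre type'' (given just before the lemma: $\phi$ differentiable and strictly convex on $\R^n$ together with the blow-up property \eqref{ess-smooth}) matches Rockafellar's, and in particular that, under $\dom(\phi)=\dom(\phi^*)=\R^n$, \eqref{ess-smooth} is automatic and hence can be dropped, so that the equivalence ``$(\R^n,\phi)$ Legendre type $\iff$ $(\R^n,\phi^*)$ Legendre type'' really does come down to the duality between strict convexity of one and differentiability of the other. That verification is the part I would spell out carefully; everything else is a direct appeal to \cite[Theorem~26.5]{Roc70}.
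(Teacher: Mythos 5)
Your proposal is correct and matches the paper exactly: the paper offers no independent proof of this lemma, presenting it precisely as the specialization of \cite[Theorem 26.5]{Roc70} (with \cite[Lemma 26.7]{Roc70} handling essential smoothness when both domains are $\R^n$), which is exactly the translation-and-specialization you carry out.
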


We now prove some lemmas before we state our main result in this section.  

\begin{lemma}\label{vainshat0}
	If  $\varphi\in \L'_0$, then $\varphi^*\in \L_0'$. Moreover,  $\nabla \varphi(o)=o$, $\nabla\varphi^*(o)=o$,  and 
	$$\{x\in\R^n: \varphi(x)=0\}=\{x\in\R^n: \varphi^*(x)=0\}=\{o\}.$$ 
\end{lemma}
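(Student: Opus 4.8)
The plan is to use the characterization of $\L'_0$ together with Lemma~\ref{proplegendre}. First I would observe that if $\varphi\in\L'_0$, then $\varphi\in\C^1(\R^n)$ with $\dom(\varphi)=\R^n$ and $\varphi$ is supercoercive; supercoercivity forces $\dom(\varphi^*)=\R^n$ (indeed, for any $y$, $\langle x,y\rangle-\varphi(x)\to-\infty$ as $|x|\to\infty$, so the supremum defining $\varphi^*(y)$ is finite and attained). By \eqref{ess-smooth}, $(\R^n,\varphi)$ is a convex function of Legendre type, so Lemma~\ref{proplegendre} applies: $(\R^n,\varphi^*)$ is also of Legendre type, $\nabla\varphi^*$ is a continuous bijection of $\R^n$, and $\nabla\varphi^*=(\nabla\varphi)^{-1}$. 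Being of Legendre type, $\varphi^*$ is strictly convex and differentiable on $\R^n$; differentiability everywhere on $\R^n$ plus $\dom(\varphi^*)=\R^n$ already gives $\varphi^*\in\C^1(\R^n)$, and strict convexity of the Legendre conjugate means $\varphi^*\in\C^2_+$ away from the point where its gradient vanishes (this is where $\nabla^2\varphi$ is invertible via the inverse function theorem applied to $\nabla\varphi=(\nabla\varphi^*)^{-1}$). I also need supercoercivity of $\varphi^*$: since $\varphi=\varphi^{**}=(\varphi^*)^*$ has $\dom(\varphi)=\R^n$, the same finiteness argument in reverse shows $\varphi^*$ is supercoercive. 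Finally $\varphi^*\geq0$ by \eqref{non-negative-6-26} and $\varphi^*(o)=0$ by \eqref{dual-at-origin} (as $\varphi(o)=0$ is the minimum of $\varphi$), and $\varphi^*$ is lower semi-continuous (it always is), so $\varphi^*\in\L_0'$.

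Next I would pin down the gradients at the origin. Since $\varphi\in\L_0$, its minimum over $\R^n$ is attained at $o$ with value $0$; as $\varphi$ is differentiable on $\R^n$, this forces $\nabla\varphi(o)=o$. Symmetrically, since we have just shown $\varphi^*\in\L_0'\subset\L_0$ and $\varphi^*$ is differentiable with minimum $0$ attained at $o$, we get $\nabla\varphi^*(o)=o$. (One could also deduce $\nabla\varphi^*(o)=o$ directly from $\nabla\varphi^*=(\nabla\varphi)^{-1}$ and $\nabla\varphi(o)=o$.)

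For the level-set statement: I claim $\{x:\varphi(x)=0\}=\{o\}$. Indeed $\varphi(o)=0$, and if $\varphi(x_0)=0=\varphi(o)$ for some $x_0\neq o$, then by strict convexity of $\varphi$ we would have $\varphi\big(\tfrac{x_0+o}{2}\big)<\tfrac12\varphi(x_0)+\tfrac12\varphi(o)=0$, contradicting $\varphi\geq0$ (which holds since $\varphi\in\L_0$ has minimum $0$). Hence the set is exactly $\{o\}$. Applying the identical argument to $\varphi^*$, which we have shown lies in $\L'_0$ and is therefore strictly convex, non-negative, and vanishes at $o$, gives $\{x:\varphi^*(x)=0\}=\{o\}$ as well. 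Combining the two equalities yields the displayed chain of equalities.

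The main obstacle I anticipate is the bookkeeping in the first paragraph: carefully verifying each defining property of $\L'_0$ for $\varphi^*$, in particular the $\C^2_+(\R^n\setminus\{o\})$ regularity. This requires knowing that $\varphi\in\C^2_+(\R^n\setminus\{o\})$ (so $\nabla^2\varphi(x)$ is positive definite for $x\neq o$), that $\nabla\varphi$ maps $\R^n\setminus\{o\}$ bijectively onto $\R^n\setminus\{o\}$ (using $\nabla\varphi(o)=o$ and injectivity of $\nabla\varphi$), and then invoking the inverse function theorem to conclude $\nabla\varphi^*=(\nabla\varphi)^{-1}$ is $C^1$ on $\R^n\setminus\{o\}$ with $\nabla^2\varphi^*(y)=\big(\nabla^2\varphi(\nabla\varphi^*(y))\big)^{-1}$ positive definite there; hence $\varphi^*\in\C^2_+(\R^n\setminus\{o\})$. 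Everything else is routine once Lemma~\ref{proplegendre} is in hand.
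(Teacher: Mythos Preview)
Your proposal is correct and follows essentially the same route as the paper: both use the supercoercivity/$\dom(\cdot^*)=\R^n$ duality (the paper cites the Moreau--Rockafellar theorem, you give the direct argument), Lemma~\ref{proplegendre} to get that $(\R^n,\varphi^*)$ is of Legendre type with $\nabla\varphi^*=(\nabla\varphi)^{-1}$ continuous, the inverse function theorem for the $\C^2_+(\R^n\setminus\{o\})$ regularity, and strict convexity plus $\varphi(o)=0=\min\varphi$ to identify the zero sets and gradients at $o$. If anything, your last paragraph is slightly more explicit than the paper in noting that $\nabla\varphi$ bijects $\R^n\setminus\{o\}$ onto $\R^n\setminus\{o\}$ before invoking the inverse function theorem.
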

\begin{proof} It is clear that $\varphi^{**}=\varphi$,  as   $\varphi$ is convex and $\varphi\in \C^1(\R^n)$.  According to the Moreau-Rockafellar theorem (see e.g. \cite[Proposition 3.5.4]{BV10}),  a proper  lower semi-continuous convex function $\varphi$ on $\R^n$ is supercoercive if and only if $\dom(\varphi^*)=\R^n$. Consequently, $\dom(\varphi^*)=\R^n$ and  $\varphi^*$ is  supercoercive, due to the facts that $\varphi$ is   supercoercive, and respectively $\dom(\varphi)=\R^n$. It is also trivial to have $\varphi^*(o)=0$ and $\varphi^*(y)\geq 0$ for all $y\in \R^n$, as $\varphi\in\L_0$. Note that  the pair $(\R^n, \varphi)$ is a convex function of Legendre type, so is  the pair $(\R^n, \varphi^*)$ by Lemma \ref{proplegendre}. In particular, $\varphi^*$ is strictly convex on $\R^n$.  Lemma \ref{proplegendre} also implies that  $\nabla \varphi$ and its inverse $\nabla \varphi^*$ are both continuous on $\R^n$ and thus $\varphi^*\in \C^1(\R^n)$.  As $\varphi\in \C^2_+(\R^n\setminus\{o\})$, the Hessian matrix $\nabla^2\varphi$ is positive definite and continuous on  $\R^n\setminus\{o\}$. It follows from  the inverse mapping theorem   that $\varphi^*\in \C^2_+(\R^n\setminus \{o\})$, which  concludes  $\varphi^*\in \L_0'$.   In particular, $\varphi(x)+\varphi^*(y)=\langle x, y\rangle$ holds for all $x, y\in \R^n$ such that $y=\nabla \varphi(x)$ (and $x=\nabla\varphi^*(y)$).  The strict convexity of $\varphi$ implies that $\varphi$ has a unique minimizer. As $\varphi\in \L_0$, $\varphi$ attains its minimum at $o$, hence $\nabla\varphi(o)=o$ and  $\{x\in\R^n: \varphi(x)=0\}=\{o\}$. The same arguments clearly work for $\varphi^*$, and this concludes the proof of Lemma \ref{vainshat0}. \end{proof}

\begin{lemma}\label{belongtothesameclass}
	Let $\varphi, \psi \in \L'_0$. For $p>1$ and   $t>0$, set  $\varphi_{t}:=\varphi\square_{p}( \psi\cdot_pt).$ Then $\varphi_{t}\in \L_0'$. Moreover, both $(\R^n, \varphi_t)$ and $(\R^n, \varphi_t^*)$ are convex functions of Legendre type. 
\end{lemma}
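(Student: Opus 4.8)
The plan is to verify directly that $\varphi_t$ satisfies each of the defining properties of $\L_0'$, using the explicit description $\varphi_t=\bigl[\bigl((\varphi^*)^p+t(\psi^*)^p\bigr)^{1/p}\bigr]^*$ from \eqref{prodottoL} together with Proposition \ref{close-Lp-A-sum} and the Legendre-duality machinery of Lemma \ref{proplegendre}. First I would record, via Lemma \ref{vainshat0}, that $\varphi^*,\psi^*\in\L_0'$; in particular $\varphi^*,\psi^*\ge 0$ with $\varphi^*(o)=\psi^*(o)=0$, both are strictly convex on $\R^n$, lie in $\C^1(\R^n)\cap\C^2_+(\R^n\setminus\{o\})$, and are supercoercive with full domain $\R^n$. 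Set $\phi_t:=\bigl((\varphi^*)^p+t(\psi^*)^p\bigr)^{1/p}$, so that $\varphi_t=\phi_t^*$ and, by Proposition \ref{close-Lp-A-sum}, $\varphi_t\in\L_0$ (hence proper, non-negative, lower semi-continuous with $\varphi_t(o)=0$) and $\phi_t=(\varphi_t)^*$.

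The first key step is to show $\phi_t\in\C^1(\R^n)\cap\C^2_+(\R^n\setminus\{o\})$ and that $\phi_t$ is strictly convex and supercoercive on $\R^n$. Smoothness away from $o$ is a routine chain-rule computation: on $\R^n\setminus\{o\}$ the functions $\varphi^*,\psi^*$ are $C^2_+$ and strictly positive except possibly at $o$, and $u\mapsto u^{1/p}$ is smooth on $(0,\infty)$, so $\phi_t\in C^2$ there; one then checks that $\nabla^2\phi_t$ is positive definite by writing $\phi_t^p=(\varphi^*)^p+t(\psi^*)^p$, differentiating twice, and using that each summand is convex with at least one strictly convex (this is exactly the Minkowski-type argument already used in the proof of Proposition \ref{close-Lp-A-sum} to get convexity, now upgraded to strict convexity / positive-definiteness). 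For $C^1$ regularity at the origin, I would use that $\varphi^*,\psi^*\in\C^1(\R^n)$ vanish at $o$ with $\nabla\varphi^*(o)=\nabla\psi^*(o)=o$ (Lemma \ref{vainshat0}), so near $o$ both $\varphi^*$ and $\psi^*$ are $o(|x|)$ times bounded, forcing $\nabla\phi_t(o)=o$ and continuity of $\nabla\phi_t$ at $o$ (here $p>1$ is used so that the power does not destroy differentiability). Supercoercivity of $\phi_t$ follows from supercoercivity of $\varphi^*$: since $\phi_t\ge(\varphi^*)$ pointwise (as $t,\psi^*\ge 0$ and $u\mapsto u^{1/p}$ is increasing), $\phi_t(x)/|x|\ge\varphi^*(x)/|x|\to+\infty$. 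Strict convexity of $\phi_t$ on all of $\R^n$ follows from the strict concavity of $u\mapsto u^{1/p}$ combined with convexity of $(\varphi^*)^p,(\psi^*)^p$, together with strict convexity of $\varphi^*$: if $\phi_t$ were affine on a segment, the strict concavity of the outer power would force $(\varphi^*)^p+t(\psi^*)^p$ to be constant there, contradicting strict convexity of $\varphi^*$.

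The second key step is to pass from $\phi_t$ to $\varphi_t=\phi_t^*$. By the Moreau–Rockafellar theorem (as cited in the proof of Lemma \ref{vainshat0}), supercoercivity of $\phi_t$ is equivalent to $\dom(\phi_t^*)=\R^n$, i.e. $\dom(\varphi_t)=\R^n$; conversely $\dom(\phi_t)=\R^n$ forces $\varphi_t=\phi_t^*$ to be supercoercive. Since $\phi_t\in\C^1(\R^n)$ with $\dom(\phi_t^*)=\R^n$, Lemma \ref{proplegendre} applies: $(\R^n,\phi_t)$ is a convex function of Legendre type, hence so is $(\R^n,\phi_t^*)=(\R^n,\varphi_t)$, and $\nabla\varphi_t=(\nabla\phi_t)^{-1}$ is a continuous bijection of $\R^n$, giving $\varphi_t\in\C^1(\R^n)$; Legendre type also gives strict convexity of $\varphi_t$ on $\R^n$. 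Finally, $\C^2_+$ regularity of $\varphi_t$ on $\R^n\setminus\{o\}$: by Lemma \ref{vainshat0} applied to $\phi_t$ one has $\nabla\phi_t(o)=o$, so $\nabla\phi_t$ maps $\R^n\setminus\{o\}$ bijectively onto $\R^n\setminus\{o\}$; on $\R^n\setminus\{o\}$ the map $\nabla\phi_t$ is $C^1$ with positive-definite (hence invertible) Jacobian $\nabla^2\phi_t$, so the inverse function theorem yields $\nabla\varphi_t=(\nabla\phi_t)^{-1}\in C^1(\R^n\setminus\{o\})$ with $\nabla^2\varphi_t=(\nabla^2\phi_t)^{-1}$ positive definite there. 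Assembling these facts gives $\varphi_t\in\L_0'$, and the Legendre-type assertions for $(\R^n,\varphi_t)$ and $(\R^n,\varphi_t^*)=(\R^n,\phi_t)$ have already been established.

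I expect the main obstacle to be the behavior at the origin: checking that $\phi_t$ (and hence, after conjugation, $\varphi_t$) is genuinely $C^1$ across $o$ despite the $p$-th power and $1/p$-th power potentially interfering with differentiability, and confirming that the singular point $o$ of $\nabla^2\phi_t$ is exactly the point that conjugates to the singular point of $\nabla^2\varphi_t$ (so that $\C^2_+$ holds precisely on $\R^n\setminus\{o\}$ and not on a larger or smaller set). Everything else is a careful but routine bookkeeping of Legendre duality and the chain rule.
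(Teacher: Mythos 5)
Your proposal follows essentially the same route as the paper: your $\phi_t=\big((\varphi^*)^p+t(\psi^*)^p\big)^{1/p}$ is exactly the paper's $\varphi_t^*$, and like the paper you establish its differentiability on $\R^n$ (including at $o$, via $\nabla\varphi^*(o)=\nabla\psi^*(o)=o$ from Lemma \ref{vainshat0}), strict convexity, and supercoercivity, and then transfer all properties to $\varphi_t$ through Lemma \ref{proplegendre} and the inverse function theorem; the paper does the same, writing out the explicit gradient and Hessian formulas \eqref{first-order-deriv} and \eqref{second-hession}. Your shortcut for supercoercivity ($\phi_t\geq\varphi^*$ pointwise) is a harmless simplification of the paper's Moreau--Rockafellar step.

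The one place where your justification does not hold up as stated is the positive definiteness of $\nabla^2\phi_t$ on $\R^n\setminus\{o\}$, which you need both for $\varphi_t^*\in\C^2_+(\R^n\setminus\{o\})$ and to invoke the inverse function theorem. You argue it by ``differentiating $\phi_t^p=(\varphi^*)^p+t(\psi^*)^p$ twice and using that each summand is convex with at least one strictly convex,'' i.e.\ as an upgrade of the Minkowski-inequality convexity argument; but strict convexity of a $C^2$ function does not imply a positive definite Hessian (consider $x\mapsto|x|^4$ at $x$ near a degenerate point), so this reasoning is insufficient for the $\C^2_+$ claim. The conclusion is nevertheless true and the fix is the computation the paper records: solving for $\nabla^2\phi_t$ gives \eqref{second-hession}, whose rank-one part $(\varphi^*)^{p-2}\nabla\varphi^*\otimes\nabla\varphi^*+t(\psi^*)^{p-2}\nabla\psi^*\otimes\nabla\psi^*-\phi_t^{p-2}\nabla\phi_t\otimes\nabla\phi_t$ is positive semi-definite by the Cauchy--Schwarz inequality applied, for each direction $v$, to $\big((\varphi^*)^{p-1}\langle\nabla\varphi^*,v\rangle+t(\psi^*)^{p-1}\langle\nabla\psi^*,v\rangle\big)^2\leq\big((\varphi^*)^p+t(\psi^*)^p\big)\big((\varphi^*)^{p-2}\langle\nabla\varphi^*,v\rangle^2+t(\psi^*)^{p-2}\langle\nabla\psi^*,v\rangle^2\big)$, while the remaining term $(\varphi^*)^{p-1}\nabla^2\varphi^*+t(\psi^*)^{p-1}\nabla^2\psi^*$ is positive definite away from $o$ by Lemma \ref{vainshat0}. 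Also note that Lemma \ref{proplegendre} is only an equivalence, so to conclude that $(\R^n,\phi_t)$ is of Legendre type you should add, as the paper does, that condition \eqref{ess-smooth} holds automatically for a differentiable convex function with $\dom(\phi_t)=\dom(\phi_t^*)=\R^n$ by \cite[Lemma 26.7]{Roc70}, combined with the strict convexity you proved.
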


\begin{proof} Let $\varphi, \psi \in \L'_0\subset \L_0$ be two convex functions.  According to Proposition \ref{close-Lp-A-sum},  $\varphi_t\in \L_0$ for all $t>0$. In particular, $\varphi_t$ is non-negative on $\R^n$. By Lemma \ref{boundness}, $0\leq \varphi_t\leq \varphi$ for all $t>0$ and $p>1$. This implies $\dom (\varphi_t)=\R^n$. This, together with the Moreau-Rockafellar theorem (see e.g. \cite[Proposition 3.5.4]{BV10}), immediately implies that $\varphi_t^*$ is supercoercive. On the other hand, by Proposition \ref{close-Lp-A-sum},  for any $t>0$,  $\dom(\varphi_t^*)$ is clearly equal to $\R^n$ and thus $\varphi_t$ is supercoercive.

Let us check the differentiability of $\varphi_t$ and $\varphi_t^*$. As explained in the proof of Lemma \ref{vainshat0},  if $\varphi\in \L'_0$,  then 
$\nabla \varphi$ and its inverse $\nabla \varphi^*$ are both continuous on $\R^n$ and continuously differentiable on $\R^n\setminus\{o\}$. Moreover,   $\nabla^2\varphi$ and   $\nabla ^2\varphi^*$ are positive definite and continuous on  $\R^n\setminus\{o\}$. 	  Similar properties hold for $\psi$.    Let $x\neq o$. In this case, both  $\varphi^*(x)$ and $\psi^*(x)$ are strictly positive due to Lemma \ref{vainshat0}.  It follows from  \eqref{varphi-t-star}  that,   for all $t>0$ and $p>1$,   $\varphi_t^*(x)>0$ and  \begin{equation}\label{first-order-deriv}\nabla\varphi_t^*(x)=\frac{ \big(\varphi^*(x)\big)^{p-1} \nabla\varphi^*(x)+t(\psi^*(x))^{p-1} \nabla\psi^*(x)}{ \big(\varphi^*_t(x)\big)^{p-1} }. \end{equation}  Clearly  $\nabla\varphi_t^*(x)$ is continuous at $o\neq x\in \R^n$.  
	On the other hand, one can verify $\nabla\varphi_t^*(o)=o$ according to the usual definition of differentiability:
	\begin{align*} \lim_{z\to o} \frac{\varphi_t^*(z)-\varphi_t^*(o)-\langle o, z-o\rangle}{|z-o|}&=\lim_{z\to o} \frac{\big((\varphi^*(z))^p+t(\psi^*(z))^p\big)^{\frac{1}{p}}}{|z|}\\ &=\lim_{z\to o} \bigg(\Big(\frac{\varphi^*(z)}{|z|}\Big)^p+t\Big(\frac{\psi^*(z)}{|z|}\Big)^p\bigg)^{\frac{1}{p}}=0,\end{align*} where the last equality follows from  $\nabla \varphi^*(o)=\nabla\psi^*(o)=o$ due to 
	Lemma \ref{vainshat0}.
	Moreover, 
	$$\lim_{x\rightarrow o}\nabla \varphi^*(x)=\nabla \varphi^*(o)=o\ \ \mathrm{and}\ \  \lim_{x\rightarrow o}\nabla \psi^*(x)=\nabla \psi^*(o)=o.$$ These conclude that,  for $p>1$,  $\varphi_t^*$ is continuously differentiable on $\R^n$, because  \begin{align*} \lim_{x\to o} \big|\nabla\varphi_t^*(x)\big|&=\lim_{x\to o} \bigg|\frac{ \big(\varphi^*(x)\big)^{p-1} \nabla\varphi^*(x)+t(\psi^*(x))^{p-1} \nabla\psi^*(x)}{ \big(\varphi^*_t(x)\big)^{p-1} }\bigg|\\ &\leq \lim_{x\to o} \big|\nabla\varphi^*(x)\big|   \bigg(\frac{\varphi^*(x)}{\varphi^*_t(x)}\bigg)^{p-1} +t^{\frac{1}{p}}  \lim_{x\to o} \big|\nabla\psi^*(x)\big|   \bigg(\frac{t (\psi^*)^p(x)}{(\varphi^*_t)^p(x)}\bigg)^{1-\frac{1}{p}} \\ &\leq \lim_{x\to o} \big|\nabla\varphi^*(x)\big|   +t^{\frac{1}{p}}  \lim_{x\to o} \big|\nabla\psi^*(x)\big|   =0.\end{align*}

Now let us check that $(\R^n, \varphi_t^*)$ is a convex function of Legendre type. To this end, as $\varphi_t^*$ is already proved to be differentiable,   we only need to verify that $\varphi^*_t$ is  strictly convex on $\R^n$ and  \eqref{ess-smooth} holds for $\varphi^*_t$. As mentioned before,  \eqref{ess-smooth} for $\varphi^*_t$ holds automatically because $\dom(\varphi)=\R^n$ and $\varphi_t^*$ is differentiable, due to  \cite[Lemma 26.7]{Roc70}. The strictly convexity of $\varphi_t^*$ is easily checked as follows:  for $p>1$, $\lambda\in (0, 1)$ and $x, y\in \R^n$ such that $x\neq y$,  by the Minkowski inequality for norms,  \eqref{varphi-t-star} and Lemma \ref{vainshat0} (which implies the strict convexity of $\varphi^*$ and $\psi^*$), one has \begin{align*} 
	\varphi_t^*(\lambda x+(1-\lambda)y)& =\Big(\big(\varphi^*(\lambda x+(1-\lambda)y)\big)^p+t\big(\psi^*(\lambda x+(1-\lambda)y)\big)^p\Big)^{\frac{1}{p}}\\ &<\Big(\big(\lambda \varphi^*(x)+(1-\lambda)\varphi^*(y)\big)^p+t\big(\lambda \psi^*(x)+(1-\lambda)\psi^*(y)\big)^p\Big)^{\frac{1}{p}}\\&\leq \lambda \varphi_t^*(x)+(1-\lambda)\varphi_t^*(y).
	\end{align*} Therefore,  $(\R^n, \varphi_t^*)$ is  a convex function of Legendre type, and so is  $(\R^n, \varphi_t)$  due to Lemma \ref{proplegendre}. Moreover, both $\nabla \varphi_t^*$ and its inverse $\nabla \varphi_t$ are continuous. Thus $\varphi_t\in \C^1(\R^n)$ and is strictly convex.

According to Lemma \ref{vainshat0},  $\nabla^2\varphi^*$ and $\nabla^2\psi^*$ are both positive definite and continuous on  $\R^n\setminus\{o\}$. Let $x\in \R^n\setminus\{o\}$ and by \eqref{first-order-deriv}, one has \begin{align} \nabla^2\varphi_t^* &=(p-1)\frac{(\varphi^*)^{p-2}\nabla\varphi^*\otimes\nabla\varphi^*+t(\psi^*)^{p-2}\nabla\psi^*\otimes\nabla\psi^*-(\varphi_t^*)^{p-2}\nabla\varphi_t^*\otimes\nabla\varphi_t^*}{(\varphi_t^*)^{p-1}}\nonumber \\ &\quad \quad + \frac{(\varphi^*)^{p-1}\nabla^2\varphi^*+t(\psi^*)^{p-1}\nabla^2\psi^*}{(\varphi_t^*)^{p-1}},\label{second-hession} \end{align} where $y\otimes y$ is the rank $1$ matrix generated by $y\in \R^n$.   Clearly $\nabla^2\varphi_t^*$ is continuous on $\R^n\setminus \{o\}$ and positive definite whose determinant is strictly positive (the calculation is standard and hence will be omitted). So $\varphi_t^*\in \C^2_+(\R^n\setminus \{o\})$ for all $t>0$ and $p>1$. Together with $(\nabla \varphi_t) ^ {-1} = \nabla \varphi_t ^*$, the inverse mapping theorem gives  that $\varphi_t\in \C^2_+(\R^n\setminus \{o\})$ for all $t>0$ and $p>1$. Moreover, for any $o\neq x\in \R^n$, $\nabla^2\varphi_t(x) $ equal the inverse of $\nabla^2 \varphi_t^*(y)$ with $y=\nabla \varphi_t(x)$. This concludes that $\varphi_t\in \L'_0$.  \end{proof}

We will also need the following lemma. The function $\varphi_t$ is as in \eqref{varphi-t}.  

\begin{lemma}\label{continuityforgradient}
	Let $\varphi, \psi \in \L'_0$. For $p>1$ and  $t>0$,  one has,\\ 
	\noindent i)  $\lim_{t\rightarrow0^{+}}\varphi_{t}(x)=\varphi(x)$ for all $x\in \R^n$;\\
	\noindent ii) for every closed bounded subset $E\subset \R^n$, $\lim_{t\rightarrow0^{+}}\nabla \varphi_{t}(x)=\nabla \varphi(x)$ uniformly on $E$.
\end{lemma}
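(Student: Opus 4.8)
The plan is to exploit the explicit formula \eqref{varphi-t-star}, namely $\varphi_t^* = \big((\varphi^*)^p + t(\psi^*)^p\big)^{1/p}$, together with the Legendre duality between $\varphi_t$ and $\varphi_t^*$ established in Lemma \ref{belongtothesameclass}. For part i), I would fix $x\in\R^n$. By Lemma \ref{boundness} the family $\{\varphi_t(x)\}_{t>0}$ is monotone nondecreasing as $t\downarrow 0$ and bounded above by $\varphi(x)$, so the pointwise limit $\bar\varphi(x):=\lim_{t\to 0^+}\varphi_t(x)$ exists and satisfies $\bar\varphi\le\varphi$. The key point is to identify this limit with $\varphi$. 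Since $\varphi_t^*\downarrow$ something as $t\downarrow 0$, and \eqref{varphi-t-star} gives $\varphi_t^*(y)\downarrow \varphi^*(y)$ pointwise (by continuity of $t\mapsto (a^p+tb^p)^{1/p}$ at $t=0$), the pointwise limit of the $\varphi_t^*$ is exactly $\varphi^*$. Taking Fenchel conjugates and using that $\varphi^*\in\L_0'$ is proper, lower semicontinuous and convex so that $\varphi^{**}=\varphi$, together with the fact that a decreasing limit of convex functions converging pointwise has its conjugate equal to the increasing limit of the conjugates (or, more directly, that $(\sup_t \varphi_t^{**}) = (\inf_t \varphi_t^*)^* = (\varphi^*)^* = \varphi$ on the interior of the domain, which here is all of $\R^n$), one concludes $\bar\varphi=\varphi$ everywhere. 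This proves i).

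For part ii), the idea is to pass from convergence of the functions to convergence of the gradients via convexity. Since each $\varphi_t$ and $\varphi$ lies in $\C^1(\R^n)$ and is convex, and $\varphi_t\to\varphi$ pointwise (hence, by the standard fact that pointwise convergence of finite convex functions on an open set is automatically locally uniform, $\varphi_t\to\varphi$ uniformly on every compact set), I would invoke the classical result that for convex functions, locally uniform convergence of the functions implies locally uniform convergence of the gradients wherever the limit is differentiable (see, e.g., Rockafellar, Theorem 25.7). Concretely: given a closed bounded $E$, enlarge it slightly to a compact $E'$ with $E\subset\mathrm{int}(E')$; on $E'$ the monotone families $\varphi_t$ are uniformly bounded and the gradients $\nabla\varphi_t$ are locally uniformly bounded (using the bound $0\le\varphi_t\le\varphi$ and convexity to control difference quotients); then any subsequential limit of $\nabla\varphi_{t_k}(x)$ must be a subgradient of $\varphi$ at $x$, and since $\varphi\in\C^1$ its subgradient is the singleton $\{\nabla\varphi(x)\}$, forcing $\nabla\varphi_t(x)\to\nabla\varphi(x)$; the uniformity on $E$ follows from a standard compactness/equicontinuity argument, since the $\nabla\varphi_t$ are equi-Lipschitz (equivalently equicontinuous) on $E$ because $\varphi_t\in\C^1\cap\C^2_+(\R^n\setminus\{o\})$ with second derivatives controlled near the problematic point $o$ via \eqref{second-hession} and \eqref{first-order-deriv} together with $\nabla\varphi_t^*(o)=o$.

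The main obstacle I anticipate is the behaviour near the origin $o$, where $\varphi_t\in\C^2_+$ fails and the Hessian formula \eqref{second-hession} degenerates: one must rule out the gradients developing a discontinuity or failing to converge uniformly precisely at $o$. I would handle this by a separate argument at $o$ itself—using $\nabla\varphi_t(o)=o=\nabla\varphi(o)$ (which follows from Lemma \ref{vainshat0} applied to $\varphi_t\in\L_0'$ and $\varphi\in\L_0'$)—and then using the uniform continuity/equicontinuity away from $o$ plus a small-ball estimate to patch the two regions together. An alternative, cleaner route for ii) that sidesteps the Hessian entirely: work with the conjugates. By Lemma \ref{proplegendre}, $\nabla\varphi_t=(\nabla\varphi_t^*)^{-1}$ and $\nabla\varphi=(\nabla\varphi^*)^{-1}$; from \eqref{varphi-t-star} one gets $\varphi_t^*\to\varphi^*$ locally uniformly with $\nabla\varphi_t^*\to\nabla\varphi^*$ locally uniformly (directly differentiating the explicit formula \eqref{first-order-deriv} and using $\nabla\varphi^*,\nabla\psi^*$ continuous with value $o$ at $o$), and then inverting, using that $\nabla\varphi^*$ is a homeomorphism of $\R^n$, yields the locally uniform convergence of $\nabla\varphi_t$ on $E$. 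I expect the write-up will combine the convexity argument for i) with this conjugate-inversion argument for ii).
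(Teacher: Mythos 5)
Your proposal is correct, and for part ii) it coincides with the paper: after the pointwise convergence of part i), the paper simply invokes \cite[Theorem 25.7]{Roc70} (pointwise convergence of finite differentiable convex functions forces locally uniform convergence of the gradients), exactly as in your first route; your worries about the origin are unnecessary, since that theorem needs only differentiability, not the $\C^2_+$ structure, and your alternative ``invert $\nabla\varphi_t^*$'' route would actually require extra care (locally uniform convergence of homeomorphisms does not by itself pass to inverses). For part i), however, your argument is genuinely different from the paper's. You use monotonicity from Lemma \ref{boundness} plus the elementary identity $\big(\inf_t \varphi_t^*\big)^*=\sup_t \varphi_t^{**}=\sup_t\varphi_t$, the pointwise monotone convergence $\varphi_t^*=\big((\varphi^*)^p+t(\psi^*)^p\big)^{1/p}\downarrow\varphi^*$ (which uses that $\psi^*$ is finite on all of $\R^n$, guaranteed by Lemma \ref{vainshat0} since $\psi\in\L_0'$ is supercoercive --- worth stating explicitly), and Fenchel--Moreau to conclude $\lim_{t\to0^+}\varphi_t=(\varphi^*)^*=\varphi$. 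The paper instead proves the missing inequality $\liminf_{t\to0^+}\varphi_t(x)\ge\varphi(x)$ directly: it bounds $\varphi_t^*\le\varphi^*+t^{1/p}\psi^*$, restricts the supremum in $\varphi_t(x)=\sup_y\{\langle x,y\rangle-\varphi_t^*(y)\}$ to a ball $B_r$ with $r>|\nabla\varphi(x)|$, and evaluates at $y=\nabla\varphi(x)$ to get the quantitative estimate $\varphi_t(x)\ge\varphi(x)-t^{1/p}c$ with $c=\max_{B_r}\psi^*$. Your duality argument is slightly slicker and avoids choosing the ball, while the paper's gives an explicit rate in $t$; both are complete and rest on the same ingredients (Lemma \ref{boundness}, biconjugation from Proposition \ref{close-Lp-A-sum}, finiteness of $\psi^*$).
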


\begin{proof}	
	i) Let $\varphi, \psi \in \L'_0$.  By Lemma \ref{boundness}, for any $x\in \R^n$, $\varphi_t(x)$ is decreasing on $t\in (0, 1]$, and thus $
	\limsup_{t\rightarrow0^{+}}\varphi_{t}(x)\leq \varphi (x). $  The desired argument in i) follows immediately once the following is checked: for any $x\in \R^n$, \begin{equation}\label{inflimit}
	\liminf_{t\rightarrow 0^+} \varphi_{t}(x)\geq \varphi(x).
	\end{equation} To this end, let  $x\in \R^n$ be fixed and $r>|\nabla \varphi(x)|$.  Let $B_r$ be the Euclidean ball with center at the origin and radius $r$. For $p>1$, it can be checked, by  $\varphi^*, \psi^*\geq 0$,  that  for all $t>0$, \begin{equation}\label{compare-5-25}
	\varphi_t^*=\big((\varphi^*)^p+t(\psi^*)^p\big)^{\frac{1}{p}}\leq\varphi^*+ t^{\frac{1}{p}} \psi^*.
	\end{equation}
	It follows from 
	\eqref{compare-5-25}  that, for $t\in (0, 1]$,  \begin{align*}
	\varphi_{t}(x) =\underset{y\in\mathbb{R}^n}{\sup}\Big\{\langle x,y\rangle -\varphi_{t}^*(y)\Big\} \geq \underset{y\in B_r}{\sup}\Big\{\langle x,y\rangle -\varphi_{t}^*(y)\Big\}\geq\underset{y\in B_r}{\sup}\Big\{\langle x,y\rangle-\varphi^*(y)-t^{\frac{1}{p}}\psi^*(y)\Big\}. \end{align*}	 Define the finite constant $c$ to be $c=\max\{\psi^*(y): y\in B_{r}\}$. The fact that $r>|\nabla \varphi(x)|$ implies $\nabla \varphi(x)\in B_r$, and hence, for $t\in (0, 1]$, 
	\begin{align*}
	\varphi_{t}(x)
	\geq\underset{y\in B_r}{\sup}\Big\{\langle x,y\rangle-\varphi^*(y)-t^{\frac{1}{p}}\psi^*(y) \Big\}
	\geq \langle x,\nabla \varphi(x)\rangle-\varphi^*(\nabla \varphi(x))-t^{\frac{1}{p}}c=\varphi(x)-t^{\frac{1}{p}}c.
	\end{align*} 	
	The desired inequality (\ref{inflimit}) follows by letting $t\rightarrow 0^+$. This completes the proof for part i). 	 
	
	\vskip 2mm \noindent ii) This is a direct consequence of \cite[Theorem 25.7]{Roc70}; in a slight different form, it reads: if $\{\phi_i\}_{i\in \mathbb{N}\cup \{0\}}$ is a sequence of finite and differentiable convex functions on an open convex set $E$ such that $\phi_i\rightarrow \phi_0$ pointwisely on $E$, then $\nabla \phi_i\rightarrow \nabla \phi$ pointwisely on $E$ and uniformly on every closed bounded subset of $E$.  \end{proof}

The following lemma provides the derivative of $\varphi_t$ with respect to $t$. 

\begin{lemma}\label{variationorigin} Let $\varphi, \psi \in \L'_0$. For $p>1$ and  for $t>0$,  set $\varphi_{t}=\varphi \square_{p}( \psi  \cdot_p t)$.  Then for all $x\in \R^n$ and $t>0$, one has 
	\begin{equation}\label{derivative-varphi-5-27--1} \frac{d}{dt}\varphi_{t}(x)=-\frac{1}{p}\Big(\psi^*(\nabla \varphi_{t}(x))\Big)^p \Big(\varphi^*_{t}(\nabla \varphi_{t}(x))\Big)^{1-p}.\end{equation}  
	In particular, for  all $x\in \R^n$, one has 
	\begin{equation}\label{derivative-varphi-5-27} 
	\frac{d}{dt}\varphi_{t}(x)\bigg|_{t=0^+}=-\frac{1}{p}\Big(\psi^*(\nabla \varphi(x))\Big)^p \Big(\varphi^*(\nabla \varphi(x))\Big)^{1-p}. \end{equation} 
\end{lemma}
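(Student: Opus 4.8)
The plan is to compute $\frac{d}{dt}\varphi_t(x)$ directly from the variational characterization $\varphi_t(x)=\sup_{y\in\R^n}\{\langle x,y\rangle-\varphi_t^*(y)\}$, exploiting the fact, established in Lemma \ref{belongtothesameclass}, that $(\R^n,\varphi_t)$ and $(\R^n,\varphi_t^*)$ are convex functions of Legendre type, so the supremum is attained at the unique point $y=\nabla\varphi_t(x)$ and $\nabla\varphi_t^*$ is a continuous bijection inverting $\nabla\varphi_t$. First I would fix $x\in\R^n$ and set $y_t=\nabla\varphi_t(x)$, so that $\varphi_t(x)=\langle x,y_t\rangle-\varphi_t^*(y_t)$. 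A clean way to differentiate is the standard envelope (Danskin-type) argument: since the map $y\mapsto\langle x,y\rangle-\varphi_t^*(y)$ has a maximum at $y_t$ where its gradient in $y$ vanishes, the derivative of the value function in $t$ equals the partial derivative of the integrand in $t$ evaluated at the maximizer, i.e. $\frac{d}{dt}\varphi_t(x)=-\frac{\partial}{\partial t}\varphi_t^*(y)\big|_{y=y_t}$. Here one uses from \eqref{varphi-t-star} that $\varphi_t^*=\big((\varphi^*)^p+t(\psi^*)^p\big)^{1/p}$, whose $t$-derivative is $\frac{1}{p}(\psi^*)^p\big((\varphi^*)^p+t(\psi^*)^p\big)^{\frac{1}{p}-1}=\frac{1}{p}(\psi^*)^p(\varphi_t^*)^{1-p}$; evaluating at $y=y_t=\nabla\varphi_t(x)$ gives exactly \eqref{derivative-varphi-5-27--1}.

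To make the envelope step rigorous I would justify the interchange carefully rather than invoke a black box, since $\varphi_t^*$ is only $\C^2_+$ away from the origin and one must handle the case $\nabla\varphi_t(x)=o$ (which by Lemma \ref{vainshat0} and Lemma \ref{belongtothesameclass} happens precisely when $x=o$, and there both sides of \eqref{derivative-varphi-5-27--1} vanish because $\varphi^*(o)=\psi^*(o)=0$ and $\varphi_t(o)=0$ for all $t$). For $x\neq o$ one has $y_t=\nabla\varphi_t(x)\neq o$, and by continuity of $t\mapsto\nabla\varphi_t(x)$ — which follows from the Legendre-type structure, or from Lemma \ref{continuityforgradient}(ii) together with the same argument applied at a base point $t_0>0$ — the maximizer stays in a compact set bounded away from $o$ on a neighbourhood of any $t_0>0$, so $\varphi_t^*$ and its $t$-derivative are jointly continuous and $\C^1$ near $(t_0,y_{t_0})$. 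Then the standard estimate
\[
\varphi_{t_0}(x)-\big(\langle x,y_{t_0+h}\rangle-\varphi_{t_0}^*(y_{t_0+h})\big)\le\varphi_{t_0}(x)-\varphi_{t_0+h}(x)+\big(\varphi_{t_0+h}^*(y_{t_0+h})-\varphi_{t_0}^*(y_{t_0+h})\big)
\]
sandwiches the difference quotient between $-\frac{\partial}{\partial t}\varphi_t^*(y_{t_0+h})\big|_{t=\xi}$ and $-\frac{\partial}{\partial t}\varphi_t^*(y_{t_0})\big|_{t=\xi'}$ for intermediate $\xi,\xi'$, and letting $h\to0$ with continuity of $y_t$ yields the claimed one-sided (hence two-sided) derivative. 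Finally, for \eqref{derivative-varphi-5-27}, I would pass to the limit $t\to0^+$ in \eqref{derivative-varphi-5-27--1}: by Lemma \ref{continuityforgradient}(ii) one has $\nabla\varphi_t(x)\to\nabla\varphi(x)$, and $\varphi_t^*\to\varphi^*$ pointwise (indeed $\varphi_t^*\downarrow\varphi^*$ by \eqref{compare-5-25} and \eqref{varphi-t-star}), while $\psi^*$ and $\varphi^*$ are continuous; for $x\neq o$ the quantity $\varphi^*(\nabla\varphi(x))>0$ so the expression $(\psi^*)^p(\varphi^*)^{1-p}$ is continuous there, and for $x=o$ both sides are $0$.

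The main obstacle I anticipate is the interchange of differentiation and supremum near the boundary behaviour of $\varphi_t^*$: the second-order regularity fails at the origin, so one cannot simply cite a smooth envelope theorem globally, and one must separately verify (i) that the maximizer $y_t=\nabla\varphi_t(x)$ depends continuously on $t$ and stays away from $o$ for $x\neq o$, and (ii) that the degenerate case $x=o$ is consistent. Both are controlled by the Legendre-type machinery of Lemmas \ref{vainshat0}--\ref{belongtothesameclass} and the uniform-convergence-of-gradients statement in Lemma \ref{continuityforgradient}, so the argument, while requiring care, does not need any genuinely new input.
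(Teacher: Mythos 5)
Your proof is correct, and it rests on the same underlying mechanism as the paper's — the Legendre/envelope identity, the computation $\partial_t\varphi_t^*(y)=\frac{1}{p}(\psi^*(y))^p(\varphi_t^*(y))^{1-p}$ coming from \eqref{varphi-t-star}, the separate (trivial) treatment of $x=o$, and the limit $t\to0^+$ via Lemma \ref{continuityforgradient} — but the technical implementation differs. The paper applies the implicit function theorem to $F(x,y,t)=\nabla\varphi_t^*(y)-x$, using the positive definiteness of $\nabla^2\varphi_t^*$ away from the origin from Lemma \ref{belongtothesameclass}, to conclude that $t\mapsto\nabla\varphi_t(x)$ is continuously differentiable; it then differentiates the identity $\varphi_t(x)=\langle x,\nabla\varphi_t(x)\rangle-\varphi_t^*(\nabla\varphi_t(x))$ and observes that the two terms containing $\frac{d}{dt}\nabla\varphi_t(x)$ cancel because $\nabla\varphi_t^*(\nabla\varphi_t(x))=x$. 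Your Danskin-type sandwich needs only continuity (not differentiability) of the maximizer $t\mapsto\nabla\varphi_t(x)$, which is a weaker input; the price is that this continuity at an interior point $t_0>0$ is not literally what Lemma \ref{continuityforgradient}(ii) gives (that statement is at $t=0^+$), so, as you acknowledge, you must rerun its argument (pointwise convergence of $\varphi_t$ to $\varphi_{t_0}$ plus \cite[Theorem 25.7]{Roc70}) at the base point $t_0$, whereas the paper obtains it for free from the implicit function theorem machinery it has already set up. Two minor points, neither a gap: the displayed inequality in your sandwich is garbled — as written it reduces to the trivial monotonicity $\varphi_{t_0}^*(y_{t_0+h})\le\varphi_{t_0+h}^*(y_{t_0+h})$ — although the surrounding description (bounding the difference quotient above and below by using each maximizer in the other value function, then the mean value theorem in $t$ and joint continuity of $\partial_t\varphi_t^*$ near $(t_0,y_{t_0})$ with $y_{t_0}\neq o$) is the standard correct argument; and your passage to $t\to0^+$ for \eqref{derivative-varphi-5-27} implicitly uses continuity of $t\mapsto\varphi_t(x)$ at $t=0$ (Lemma \ref{continuityforgradient}(i)) together with the usual mean-value argument, which is exactly the (equally terse) step taken in the paper.
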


\begin{proof} Let $\varphi, \psi \in \L'_0$ and $p>1$. Formulas  \eqref{derivative-varphi-5-27--1} and  \eqref{derivative-varphi-5-27}  clearly hold for $x=o$ following from Lemmas \ref{vainshat0} and \ref{belongtothesameclass} (the latter one gives $\varphi_t\in \L_0$ and hence $\nabla \varphi_t(o)=o$). 
	
	Let $x\neq o$. By Lemma \ref{belongtothesameclass} (and its proof), one sees  that 
	the mapping $F$ defined by $
	F(x, y, t) = \nabla \varphi_t^*(y)-x$
	is continuously differentiable  on $(\R ^n\setminus\{o\}) \times (\R ^n\setminus\{o\}) \times (0, + \infty)$.  Note that $\frac{\partial F}{\partial y}=\nabla^2\varphi_t^*(y)$ is nonsingular for every $y \in \R ^n\setminus\{o\}$ by \eqref{second-hession}. The implicit function theorem yields (locally) the existence of a unique continuously differentiable mapping $y = y (x, t)$ for $(x, t)\in (\R^n\setminus\{o\})\times (0, \infty)$ such that $F(x, y(x, t), t)=o.$ That is, $x=\nabla\varphi_t^*(y(x, t)).$  According to Lemma \ref{belongtothesameclass}, $\nabla \varphi_{t}=\nabla^{-1} \varphi^*_{t}$. Thus,  $y(x, t)=\nabla \varphi_t(x)$ and $x= \nabla\varphi_t^*\big(\nabla \varphi_t(x)\big)$ for $x\neq o$. Moreover,  for $x\neq o$, one has $\varphi_{t}(x)=\langle x,\nabla \varphi_{t}(x)\rangle-\varphi^*_{t}(\nabla \varphi_{t}(x))$.  Taking  derivative from both sides, one gets, for any  $t\in (0, \infty)$ and (fixed) $x\in \R^n\setminus\{o\}$, 
	\begin{align*}
	\frac{d}{dt}\varphi_{t}(x)&=\langle x, \frac{d}{dt}\nabla \varphi_{t}(x)\rangle-\frac{1}{p}\Big(\psi^*(\nabla \varphi_{t}(x))\Big)^p \Big(\varphi^*_{t}(\nabla \varphi_{t}(x))\Big)^{1-p}-\langle\nabla\varphi^*_{t}(\nabla \varphi_{t}(x)), \frac{d}{dt}\nabla \varphi_{t}(x)\rangle\\
	&=-\frac{1}{p}\Big(\psi^*(\nabla \varphi_{t}(x))\Big)^p \Big(\varphi^*_{t}(\nabla \varphi_{t}(x))\Big)^{1-p}.
	\end{align*}	 This concludes the proof of \eqref{derivative-varphi-5-27--1}. Consequently, \eqref{derivative-varphi-5-27} follows from part ii) of Lemma \ref{continuityforgradient} and by letting $t\rightarrow 0^+$ in \eqref{derivative-varphi-5-27--1}. 
\end{proof}	

In order to obtain an explicit formula for $\delta J_p(f, g)$, we need to define the notion of admissible $p$-perturbation.  See \cite{CF13} for the case for $p=1$. 

\begin{definition}\label{p-perturbation} Let $f=e^{-\varphi}\in \A_0$ and $p>1$. The function $g=e^{-\psi}\in \A_0$ is said to be an admissible $p$-perturbation for $f$, if there exists a constant $c>0$ such that $(\varphi^*)^p-c (\psi^*)^p$ is a convex function.  \end{definition}

Our main result in this section is the following integral formula for  $\delta J_p(f, g)$. Again, we only focus on $p>1$ and the case $p=1$ has been covered in \cite[Theorem 4.5]{CF13}.

\begin{theorem}\label{variationformula}
	Let $f=e^{-\varphi}\in \A'_0$ and $g=e^{-\psi}\in \A'_0$.  For $p>1$, assume that $g$ is an admissible $p$-perturbation for $f$. In addition, suppose that there exists a constant $k>0$ such that 
	\begin{equation} \label{compatible-1} \det\Big(\nabla^2 (\varphi^*)^p (y) \Big)\leq k \big(\varphi^*(y)\big)^{n(p-1)}  \det\big(\nabla^2 \varphi^* (y) \big)
	\end{equation} holds for all $y\in \R^n\setminus\{o\}$.  Then  		\begin{align}
	\delta J_{p}(f,g)&=\frac{1}{p}\int_{\mathbb{R}^n}(\psi^*(\nabla \varphi(x)))^{p}(\varphi^*(\nabla\varphi(x)))^{1-p}e^{-\varphi(x)}\,dx \nonumber \\&=\frac{1}{p}\int_{\mathbb{R}^n}(\psi^*(y))^{p}(\varphi^*(y))^{1-p}\,d\mu(f, y).\label{tesi'}\end{align} \end{theorem}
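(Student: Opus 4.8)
The proof of Theorem~\ref{variationformula} proceeds by differentiating under the integral sign in the identity
$$
J(f\oplus_p t\cdot_p g)=\int_{\R^n}e^{-\varphi_t(x)}\,dx, \qquad \varphi_t=\varphi\,\square_p(\psi\cdot_p t),
$$
and then pushing the resulting integral forward through $\nabla\varphi$ to obtain the moment-measure form. The first step is pointwise: by Lemma~\ref{variationorigin} (specifically \eqref{derivative-varphi-5-27}), for every $x\in\R^n$,
$$
\frac{d}{dt}\Big(e^{-\varphi_t(x)}\Big)\Big|_{t=0^+}
=-e^{-\varphi(x)}\cdot\frac{d}{dt}\varphi_t(x)\Big|_{t=0^+}
=\frac1p\,e^{-\varphi(x)}\big(\psi^*(\nabla\varphi(x))\big)^p\big(\varphi^*(\nabla\varphi(x))\big)^{1-p},
$$
which is exactly the integrand in \eqref{tesi'}. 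The content of the theorem is therefore the interchange of limit and integral, i.e.~producing an integrable majorant for the difference quotients $\big(e^{-\varphi(x)}-e^{-\varphi_t(x)}\big)/t$ valid for all small $t>0$.

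Second, I would construct the majorant using the two hypotheses. Since $g$ is an admissible $p$-perturbation, $(\varphi^*)^p-c(\psi^*)^p$ is convex for some $c>0$; combined with $\varphi_t^*=\big((\varphi^*)^p+t(\psi^*)^p\big)^{1/p}$ from \eqref{varphi-t-star}, this lets one control $\varphi_t^*$ from below (for $t<c$) by a quantity comparable to $\varphi^*$, hence control $\varphi_t=(\varphi_t^*)^*$ from above, uniformly in $t$; Lemma~\ref{boundness} already gives $\varphi_t\le\varphi$ from the other side and $\varphi_1\le\varphi_t$ as a crude global bound. The monotonicity in Lemma~\ref{boundness} together with the mean value theorem turns the difference quotient into $e^{-\xi_t(x)}\cdot\frac{\varphi(x)-\varphi_t(x)}{t}$ for some intermediate value, and one bounds $e^{-\xi_t(x)}\le e^{-\varphi_1(x)}$, which is integrable by Proposition~\ref{close-Lp-A-sum}. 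It remains to bound $\frac{\varphi(x)-\varphi_t(x)}{t}$; here one uses the explicit derivative \eqref{derivative-varphi-5-27--1} from Lemma~\ref{variationorigin} together with the admissibility to show $\big|\frac{d}{dt}\varphi_t(x)\big|$ is dominated, uniformly in $t\in(0,1]$, by a fixed locally integrable function of the form $C(\psi^*(\nabla\varphi_t(x)))^p(\varphi_t^*(\nabla\varphi_t(x)))^{1-p}$, and one transfers this to a bound in the variable $x$ using Lemma~\ref{continuityforgradient}~ii) (uniform convergence of $\nabla\varphi_t$ on compacta) to handle bounded $x$ and the supercoercivity/growth estimates for large $|x|$. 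The dominated convergence theorem then yields the first equality in \eqref{tesi'}.

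Third, the passage from the $x$-integral to the $\mu(f,\cdot)$-integral is the change of variables $y=\nabla\varphi(x)$: by Definition~\ref{def-moment-measure}, for the nonnegative Borel function $g(y)=(\psi^*(y))^p(\varphi^*(y))^{1-p}$ one has $\int g(\nabla\varphi(x))e^{-\varphi(x)}\,dx=\int g(y)\,d\mu(f,y)$, giving the second equality; one should note $\varphi^*(y)>0$ for $y\ne o$ and that $\{o\}$ carries no mass (Lemma~\ref{vainshat0} plus $\nabla\varphi(o)=o$ and the measure being absolutely continuous away from there), so the integrand is well-defined $\mu(f,\cdot)$-a.e.

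\textbf{Main obstacle.} The genuinely hard part is the uniform integrable majorant for the difference quotients, and this is precisely where the extra hypothesis \eqref{compatible-1} enters. Because $\phi=\big((\varphi^*)^p+t(\psi^*)^p\big)^{1/p}$ is nonlinear in the $p$-th powers (unlike the $p=1$ case treated in \cite{CF13}), the second-order analysis of $\varphi_t$ via \eqref{second-hession} produces a Hessian whose determinant must be controlled from below by $\det(\nabla^2\varphi^*)$ up to the factor $(\varphi^*)^{n(p-1)}$; the determinant condition \eqref{compatible-1} is exactly what guarantees this control does not degenerate as $t\to0^+$, so that the Jacobian factors appearing when one estimates $\varphi(x)-\varphi_t(x)$ through the inverse maps $\nabla\varphi_t^*$ stay bounded. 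Assembling this estimate, and checking it survives both near $o$ (where $\nabla^2\varphi^*$ need not be defined but $\varphi^*,\psi^*$ vanish at comparable rates) and near infinity (where supercoercivity is used), is the technical heart of the argument; the rest is bookkeeping with Lemmas~\ref{vainshat0}--\ref{variationorigin}.
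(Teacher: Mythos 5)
Your skeleton (pointwise derivative from Lemma \ref{variationorigin}, an interchange of limit and integral, then push-forward under $\nabla\varphi$) matches the paper's, but the step that is the actual content of the theorem --- producing a $t$-uniform integrable majorant --- is not carried out, and the route you sketch for it is not the one that works. You propose to dominate the difference quotient in the $x$-variable by $e^{-\varphi_1(x)}$ times a bound on $\big|\frac{d}{dt}\varphi_t(x)\big|$, but the bound you offer, $C\big(\psi^*(\nabla\varphi_t(x))\big)^p\big(\varphi_t^*(\nabla\varphi_t(x))\big)^{1-p}$, is just $p\big|\frac{d}{dt}\varphi_t(x)\big|$ itself and still depends on $t$; ``transferring'' it to a fixed function of $x$ via Lemma \ref{continuityforgradient} ii) handles bounded $x$ only, and the unspecified ``supercoercivity/growth estimates for large $|x|$'' is exactly the issue: admissibility only yields, for instance, $p\,\big|\frac{d}{dt}\varphi_t(x)\big|\le c^{-1}\varphi^*(\nabla\varphi_t(x))\le c^{-1}\varphi(2x)$, and $\varphi(2x)e^{-\varphi_1(x)}$ need not be integrable for a general supercoercive $\varphi\in\L_0'$. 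Moreover, your outline never actually consumes hypothesis \eqref{compatible-1}: it is name-checked in your ``obstacle'' paragraph, but no step of the plan uses it, which signals that the mechanism is missing.

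What the paper does at this point is different and concrete. After writing the difference quotient via the Lagrange mean value theorem as $\frac1p\big(\psi^*(\nabla\varphi_s(x))\big)^p\big(\varphi_s^*(\nabla\varphi_s(x))\big)^{1-p}e^{-\varphi_s(x)}$ with $s\in(0,t)$, it changes variables $y=\nabla\varphi_s(x)$ (legitimate by Lemma \ref{belongtothesameclass}), so the limit $s\to0^+$ is taken on the $y$-side with integrand $(\psi^*(y))^p(\varphi_s^*(y))^{1-p}e^{-\varphi_s(\nabla\varphi_s^*(y))}\det\big(\nabla^2\varphi_s^*(y)\big)$. There both hypotheses give $s$-independent bounds: the determinant inequality $\det(A+B)\ge\det A+\det B$ applied to $\nabla^2\big((\varphi_s^*)^p\big)=\nabla^2\big((\varphi^*)^p\big)+s\,\nabla^2\big((\psi^*)^p\big)$, combined with admissibility and \eqref{compatible-1}, yields $\det\big(\nabla^2\varphi_s^*(y)\big)\le k_0\det\big(\nabla^2\varphi^*(y)\big)$ for $s\in(0,1)$; admissibility also gives $(\psi^*)^p\le c^{-1}\varphi^*(\varphi_s^*)^{p-1}$ and the exponent bound $\varphi_s^*(y)-\langle y,\nabla\varphi_s^*(y)\rangle\le\big(\tfrac{c}{1+c}\big)^{\frac{p-1}{p}}\big(\varphi^*(y)-\langle y,\nabla\varphi^*(y)\rangle\big)\le0$. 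The resulting majorant $h(y)=k_0c^{-1}\varphi^*(y)\exp\big(\big(\tfrac{c}{1+c}\big)^{\frac{p-1}{p}}(\varphi^*(y)-\langle y,\nabla\varphi^*(y)\rangle)\big)\det\big(\nabla^2\varphi^*(y)\big)$ is then shown integrable by changing variables back and invoking \eqref{finitezza-1} for the rescaled function $\widetilde{\varphi}=\big(\tfrac{c}{1+c}\big)^{\frac{p-1}{p}}\varphi$. Until you either reproduce this $y$-side domination or supply a genuinely workable $x$-side majorant that explains where \eqref{compatible-1} enters, the proof is incomplete at its central step; the pointwise limit and the final identification with $\mu(f,\cdot)$ are the easy parts and are fine as you state them.
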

\begin{proof} Let $f=e^{-\varphi}\in \A'_0$ and $g=e^{-\psi}\in \A'_0$.  Let $t>0$ be fixed. According to \eqref{varphi-t}, Proposition \ref{close-Lp-A-sum}, and Definition \ref{variation-p-5-15-1}, one sees that   		\begin{equation}\label{difference-5-28}
	\delta J_{p}(f,g)=\lim_{t\rightarrow0^+} \frac{J(f\oplus_{p} t\cdot_p g)-J(f)}{t}=\lim_{t\rightarrow0^+} \int_{\R^n} \frac{e^{-\varphi_t(x)}-e^{-\varphi(x)}}{t}\,dx. \end{equation}  By Lemma \ref{variationorigin} (in particular, \eqref{derivative-varphi-5-27}),  it holds that, for $x\in \R^n$, \begin{equation}\label{difference-5-28-1}
	\lim _{t \rightarrow 0^+} \frac{e^{-\varphi_{t}(x)}-e^{-\varphi(x)}}{t}=\frac{1}{p}\Big(\psi^*(\nabla \varphi(x))\Big)^p \Big(\varphi^*(\nabla \varphi(x))\Big)^{1-p} e^{-\varphi(x)}.\end{equation} Consequently, the first formula in \eqref{tesi'} follows immediately from \eqref{difference-5-28} and \eqref{difference-5-28-1} once the dominated convergence theorem is verified. The second formula in \eqref{tesi'} follows directly from Definition \ref{def-moment-measure}. 
	
	Now let us verify that the 	dominated convergence theorem can be applied for 	\eqref{difference-5-28}. For any fixed $x\in \R^n$, Lemma \ref{continuityforgradient} implies that $\varphi_t(x)$ is continuous at $t=0$ and Lemma \ref{variationorigin} yields the differentiability of $\varphi_t(x)$ on $t\in (0, \infty)$. Together with \eqref{derivative-varphi-5-27--1}, the Lagrange mean value theorem can be applied to the function $t \mapsto e^{-\varphi_t(x)}$ and obtain that  there exists an $s\in (0, t)$, such that  \begin{equation}\label{Lagrange-5-28}\frac{e^{-\varphi_{t}(x)}-e^{-\varphi(x)}}{t-0}=\frac{d}{dt}e^{-\varphi_{t}(x)} \bigg|_{t=s}=\frac{1}{p}\Big(\psi^*(\nabla \varphi_{s}(x))\Big)^p \Big(\varphi^*_{s}(\nabla \varphi_{s}(x))\Big)^{1-p}e^{-\varphi_s(x)}.\end{equation}  The function on the right, for any $s\in [0, t]$, is integrable over $\R^n$, namely, 
	\begin{align} \Psi(s)=\frac{1}{p} 
	\int_{\R^n} \Big(\psi^*(\nabla \varphi_{s}(x))\Big)^p \Big(\varphi^*_{s}(\nabla \varphi_{s}(x))\Big)^{1-p}e^{-\varphi_s(x)}\,dx<\infty \label{bounded-5-28-1}. \end{align} Indeed, \eqref{varphi-t-star}  yields $s(\psi^*)^p\leq (\varphi_s^*)^p$ for any $s>0$. Together with  \eqref{finitezza-1}, one has   \begin{align*}
	s \Psi(s) &= \frac{1}{p} \int_{\R^n} s\Big(\psi^*(\nabla \varphi_{s}(x))\Big)^p \Big(\varphi^*_{s}(\nabla \varphi_{s}(x))\Big)^{1-p}e^{-\varphi_s(x)}\,dx \\
	&\leq \frac{1}{p}  \int_{\mathbb{R}^{n}}  \Big(\varphi^*_{s}(\nabla \varphi_{s}(x))\Big)^{p} \Big(\varphi^*_{s}(\nabla \varphi_{s}(x))\Big)^{1-p}e^{-\varphi_s(x)}\,dx\\
	&= \frac{1}{p}  \int_{\mathbb{R}^{n}}   \varphi^*_{s}(\nabla \varphi_{s}(x))e^{-\varphi_s(x)}\,dx<+\infty.
	\end{align*}  For $s=0$, the assumption that $g$ is an admissible $p$-perturbation of $f$ is needed.  Recall that $\varphi^*(o)=\psi^*(o)=0$ if $f=e^{-\varphi}\in \A_0$  and $g=e^{-\psi}\in \A_0$. According to Definition \ref{p-perturbation}, there exists a constant $c>0$, such that $(\varphi^*)^p-c (\psi^*)^p$ is a convex function. By  Lemma \ref{vainshat0}, one has  $(\varphi^*(o))^p-c (\psi^*(o))^p=0$ and $ \nabla\big((\varphi^*)^p-c(\psi^*)^p\big)(o)=0$, which in turn yields that, for any $y\in \R^n$, 
	\begin{eqnarray}\label{p-add-compare-1}
	\big((\varphi^*)^p-c(\psi^*)^p\big)(y)\geq \big((\varphi^*)^p-c(\psi^*)^p\big)(o)- \langle y, \nabla\big((\varphi^*)^p-c(\psi^*)^p\big)(o) \rangle =0. 
	\end{eqnarray}	 Consequently, by \eqref{finitezza-1},  the following holds:
	\begin{align*} 
	c \Psi(0)=\int_{\R^n} c\big(\psi^*(\nabla \varphi (x))\big)^p  \big(\varphi^*(\nabla \varphi(x))\big)^{1-p} e^{-\varphi(x)}\,d x \leq  \int_{\R^n}  \varphi^*(\nabla \varphi(x))  e^{-\varphi(x)}\,d x <+\infty. \end{align*}
	By \eqref{difference-5-28},  \eqref{Lagrange-5-28} and \eqref{bounded-5-28-1}, we obtain that, for all $s\in (0, t)$, 
	\begin{align}\delta J_{p}(f,g) 
	= \frac{1}{p}\lim_{s\to 0^+}  \int_{\R^n}  \big(\psi^*(\nabla \varphi_{s}(x))\big)^p \big(\varphi^*_{s}(\nabla \varphi_{s}(x))\big)^{1-p}e^{-\varphi_s(x)}\,dx.  \label{Lagrange-5-28-22}\end{align} Let $y=\nabla \varphi_{s}(x)$. From Lemmas \ref{vainshat0} and \ref{belongtothesameclass} (in particular, its proof),  \eqref{Lagrange-5-28-22} can be rewritten as 
	\begin{align}  
	\delta J_{p}(f,g)     &= \frac{1}{p}\lim_{s\to 0^+}  \int_{\R^n\setminus\{o\}}  \big(\psi^*(\nabla \varphi_{s}(x))\big)^p \Big(\varphi^*_{s}(\nabla \varphi_{s}(x))\Big)^{1-p}e^{-\varphi_s(x)}\,dx \nonumber \\ &= \frac{1}{p}\lim_{s\to 0^+}  \int_{\R^n\setminus\{o\}}  \big(\psi^*(y)\big)^p \Big(\varphi^*_{s}(y)\Big)^{1-p}e^{-\varphi_s(\nabla\varphi_s^*(y))} \det \big(\nabla^2\varphi^*_s(y)\big) \,dy. \label{Lagrange-5-31-22}
	\end{align} The desired formula \eqref{tesi'} follows once the dominated convergence theorem is verified for \eqref{Lagrange-5-31-22}. 
	
	According to Lemma \ref{belongtothesameclass}, one has $\varphi^*_t\in \C^2_+(\R^n\setminus\{o\})$.    For $y\neq o$,  \eqref{second-hession}  can be rewritten as \begin{align} \nabla^2\big((\varphi_t^*)^p\big)(y)&=\nabla^2\big((\varphi^*)^p\big)(y)+t\nabla^2\big((\psi^*)^p\big)(y)\nonumber  \\&= \Big(p(p-1)\big(\varphi_t^*\big)^{p-2} \nabla\varphi^*_t\otimes\nabla\varphi^*_t+p\big(\varphi_t^*\big)^{p-1}\nabla^2\varphi^*_t\Big)(y). \label{second-hession-5-30} \end{align}  For a positive definite matrix $A$  and  a semi-definite matrix $B$, the following holds:  \begin{equation} \label{matrix-ineq} \det (A+B)\geq \det A+\det B,\end{equation}  where $\det A$ denotes the determinant of $A$. This inequality can be applied to \eqref{second-hession-5-30}  to get  \begin{align}  p^n \big(\varphi_t^*(y)\big)^{n(p-1)}  \det\big(\nabla^2\varphi^*_t(y)\big) \leq   \det \big(\nabla^2(\varphi_t^*)^p(y)\big) =\det\big(\nabla^2(\varphi^*)^p(y)+t\nabla^2(\psi^*)^p(y)\big). \label{second-hession-5-30-1} \end{align} 
	As  $g=e^{-\psi}\in \A'_0$ is an admissible $p$-perturbation for $f$,  there exists a constant $c>0$ such that $\phi=(\varphi^*)^p-c (\psi^*)^p$ is convex. Hence $\nabla^2  (\psi^*)^p =\frac{1}{c} \nabla^2  (\varphi^*)^p -\frac{1}{c} \nabla^2 \phi.$  It follows from \eqref{compatible-1},  \eqref{matrix-ineq}  and \eqref{second-hession-5-30-1}   that, for $y\neq o$ and $p>1$,  
	\begin{align*} 
	\det\big(\nabla^2\varphi^*_t(y)\big)
	& \leq  p^{-n} \big(\varphi_t^*(y)\big)^{n(1-p)}    \det\Big(\nabla^2 (\varphi^*)^p (y)+t\nabla^2 (\psi^*)^p (y)\Big)\\
	& \leq  p^{-n} \big(\varphi^*(y)\big)^{n(1-p)}   \det\Big(\nabla^2 (\varphi^*)^p (y)+\frac{t}{c} \nabla^2  (\varphi^*)^p(y) -\frac{t}{c} \nabla^2 \phi(y)\Big)\\ 
	&\leq p^{-n} \big(\varphi^*(y)\big)^{n(1-p)}  \det\Big(\nabla^2 (\varphi^*)^p (y)+\frac{t}{c} \nabla^2  (\varphi^*)^p(y) \Big)\\ 
	&\leq \bigg(\frac{t+c}{cp}\bigg)^{n} \big(\varphi^*(y)\big)^{n(1-p)}  \det\Big(\nabla^2 (\varphi^*)^p (y) \Big)\\ 
	& \leq k\bigg(\frac{t+c}{cp}\bigg)^{n} \det\big(\nabla^2 \varphi^* (y) \big),
	\end{align*} where the constant $k>0$ is given by \eqref{compatible-1}. Let $k_0=k\big(\frac{1+c}{cp}\big)^{n}.$ Hence, if $t\in (0, 1)$, one gets \begin{align}\label{compatible-2} \det \big(\nabla^2\varphi^*_t(y)\big)\leq k_0  \det\big(\nabla^2\varphi^* (y) \big). \end{align}
	
	For any $o\neq y\in \R^n$,  by \eqref{def-dual-2}, \eqref{first-order-deriv} and Lemma \ref{vainshat0}, one gets \begin{align*}  \frac{\,d }{\,dt}\Big(\big( \varphi^*_t(y)\big)^p-\Big\langle y, \big(\varphi^*_t(y)\big)^{p-1}\nabla \varphi^*_t(y)\Big\rangle\Big)  &= \big( \psi^*(y)\big)^p-\Big\langle y, \big(\psi^*(y)\big)^{p-1}\nabla \psi^*(y)\Big\rangle\\ &= \big( \psi^*(y)\big)^{p-1} \Big(\psi^*(y)-\big\langle y,  \nabla \psi^*(y)\big\rangle \Big) \\&=-\big( \psi^*(y)\big)^{p-1}  \psi(\nabla \psi^*(y)  \leq 0.  \end{align*} 	
  Together with \eqref{varphi-t-star} and \eqref{p-add-compare-1}, one gets  that, for $p>1$, $o\neq y\in\R^n,$ and $t\in (0, 1)$,
 \begin{align}  
 \varphi^*_t(y)-\big\langle y, \nabla \varphi^*_t(y)\big\rangle 
 & \leq \big(\varphi^*_t(y)\big)^{1-p}\ \Big( \big(\varphi^*(y)\big)^p-\Big\langle y, \big(\varphi^*(y)\big)^{p-1}\nabla \varphi^*(y)\Big\rangle\Big) \nonumber \\  &=\bigg(\frac{\varphi^*(y)} {\varphi^*_t(y)}\bigg)^{p-1}  \big(\varphi^*(y)- \langle y, \nabla \varphi^*(y)\rangle\big) \nonumber \\
 &\leq \Big(\frac{c}{1+c}\Big)^{\frac{p-1}{p}}  \big(\varphi^*(y)- \langle y, \nabla \varphi^*(y)\rangle\big)\leq 0. 
\label{s-compare-5} 
\end{align}
 Formulas \eqref{varphi-t-star} and \eqref{p-add-compare-1}  also yield that, for any $t>0$,  \begin{eqnarray} (\psi^*)^p\leq c^{-1}  \varphi^* (\varphi_t^*)^{p-1}.\label{p-add-com-222}  \end{eqnarray} 

Now we are ready to check the interchange of orders of limit and integration in  \eqref{Lagrange-5-31-22}. Combining \eqref{def-dual-2},   \eqref{compatible-2}   
\eqref{s-compare-5}, 
\eqref{p-add-com-222}, one has,  for $s\in (0, 1)$ and $o\neq y\in \R^n$, \begin{align*} h(y)&=k_0 c^{-1} \varphi^*(y)  \exp\bigg(\Big(\frac{c}{1+c}\Big)^{\frac{p-1}{p}}  \big(\varphi^*(y)- \langle y, \nabla \varphi^*(y)\rangle\big)\bigg) \det \big(\nabla^2\varphi^*(y)\big) \\ &\geq \big(\psi^*(y)\big)^p \Big(\varphi^*_{s}(y)\Big)^{1-p}e^{-\varphi_s(\nabla\varphi_s^*(y))} \det \big(\nabla^2\varphi^*_s(y)\big).\end{align*} The function $h$ is  integrable by \eqref{const-mul-dual} and \eqref{finitezza-1}. Indeed, a calculation similar to \eqref{Lagrange-5-31-22} leads that \begin{align*} \int_{\R^n\setminus\{o\}} h(y)\,dy &=
 k_0 c^{-1}  \int_{\R^n\setminus\{o\}}  \varphi^*(y)  \exp\bigg(\Big(\frac{c}{1+c}\Big)^{\frac{p-1}{p}}  \big(\varphi^*(y)- \langle y, \nabla \varphi^*(y)\rangle\big)\bigg) \det \big(\nabla^2\varphi^*(y)\big)\,dy\\ &= 
 k_0 c^{-1}  \int_{\R^n}   \varphi^*(\nabla\varphi(x))  \exp\Big(-\Big(\frac{c}{1+c}\Big)^{\frac{p-1}{p}}  \varphi(x) \Big) \,dx\\ &=k_0 c^{-1}   \Big(\frac{c}{1+c}\Big)^{\frac{1-p}{p}} \int_{\R^n}   \widetilde{\varphi}^*(\nabla\widetilde{\varphi}(x))  e^{- \widetilde{\varphi}(x)}  \,dx \in (-\infty, \infty),
\end{align*} where $\widetilde{\varphi}=(\frac{c}{1+c})^{\frac{p-1}{p}}  \varphi$.  Therefore, the dominated convergence theorem can be applied to  \eqref{Lagrange-5-31-22} and the desired formula \eqref{tesi'} holds.  \end{proof}

The condition \eqref{compatible-1} is indeed natural and many widely used functions do satisfy this condition, for example, the Gaussian function $e^{-|x|^2/2}$. The following results give  some convenient ways to check condition  \eqref{compatible-1}. 

\begin{corollary} \label{verify-condition-1}  Assume that $\varphi\in \C^2_+(\R^n\setminus\{o\})$. Then \eqref{compatible-1} holds if any one of the following holds: 
	\vskip 2mm \noindent i) for some $\alpha\in [0, 1)$,  the function  $\frac{1}{\alpha} (\varphi^*)^{\alpha}$  (understood as $\log \varphi^*$ when $\alpha=0$) is convex;  \vskip 2mm  \noindent ii) there exists a constant $k_1$, such that, for any $y\in \R^n\setminus\{o\}$, \begin{align} \Big\langle \nabla \varphi^*(y), \big(\nabla^2\varphi^*(y)\big)^{-1} \nabla \varphi^*(y)\Big\rangle \leq k_1 \varphi^*(y).\label{condition-5-31-1} \end{align} 
\end{corollary}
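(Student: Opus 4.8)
The plan is to reduce condition \eqref{compatible-1} to an equivalent quadratic-form bound via the matrix determinant lemma, to observe that this bound is precisely condition ii), and then to deduce ii) from i) by a short semidefinite-ordering argument. Throughout we are in the setting of Theorem \ref{variationformula}, so $\varphi\in\L_0'$ and hence, by Lemma \ref{vainshat0}, $\varphi^*\in\L_0'$ as well; in particular $\varphi^*\in\C^2_+(\R^n\setminus\{o\})$, so $\nabla^2\varphi^*(y)$ is positive definite and $\varphi^*(y)>0$ for every $y\neq o$, which is all that is needed below.

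First I would write $u=\varphi^*$ and record the identity $\nabla^2(u^p)=p\,u^{p-1}\bigl[\nabla^2 u+\tfrac{p-1}{u}\,\nabla u\otimes\nabla u\bigr]$ on $\R^n\setminus\{o\}$. Since $p>1$ and $\nabla^2 u\succ 0$, the matrix determinant lemma then gives
\[ \det\bigl(\nabla^2(u^p)(y)\bigr)=p^{\,n}\,u(y)^{n(p-1)}\det\bigl(\nabla^2 u(y)\bigr)\Bigl(1+\tfrac{p-1}{u(y)}\bigl\langle\nabla u(y),(\nabla^2 u(y))^{-1}\nabla u(y)\bigr\rangle\Bigr), \]
in which the quantity $\langle\nabla u,(\nabla^2 u)^{-1}\nabla u\rangle$ is nonnegative because $(\nabla^2 u)^{-1}\succ 0$. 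Dividing by the strictly positive factor $u(y)^{n(p-1)}\det(\nabla^2 u(y))$, one sees that \eqref{compatible-1} holds with $k=p^n\bigl(1+(p-1)k_1\bigr)$ as soon as $\langle\nabla\varphi^*(y),(\nabla^2\varphi^*(y))^{-1}\nabla\varphi^*(y)\rangle\le k_1\,\varphi^*(y)$ for all $y\neq o$; but this is exactly condition ii), so ii)$\Rightarrow$\eqref{compatible-1}.

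It then remains to show i)$\Rightarrow$ii). For $\alpha\in(0,1)$ one computes $\nabla^2\bigl(\tfrac1\alpha u^\alpha\bigr)=u^{\alpha-1}\bigl[\nabla^2 u-\tfrac{1-\alpha}{u}\,\nabla u\otimes\nabla u\bigr]$, and for $\alpha=0$ the same formula holds with $\tfrac1\alpha u^\alpha$ read as $\log u$; hence the convexity assumed in i) forces $\nabla^2\varphi^*(y)\succeq\tfrac{1-\alpha}{\varphi^*(y)}\,\nabla\varphi^*(y)\otimes\nabla\varphi^*(y)$ at every $y\neq o$ (the Hessian of a convex $\C^2$ function is positive semidefinite wherever it is defined, even if the domain is not convex). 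Fixing $y\neq o$, setting $w=(\nabla^2\varphi^*(y))^{-1}\nabla\varphi^*(y)$ and $t=\langle\nabla\varphi^*(y),w\rangle=\langle w,\nabla^2\varphi^*(y)\,w\rangle\ge 0$, I would test this matrix inequality against $w$ to get $t=\langle w,\nabla^2\varphi^*(y)\,w\rangle\ge\tfrac{1-\alpha}{\varphi^*(y)}\langle w,\nabla\varphi^*(y)\rangle^2=\tfrac{1-\alpha}{\varphi^*(y)}\,t^2$; thus $t=0$ or $t\le\varphi^*(y)/(1-\alpha)$. Either way condition ii) holds with $k_1=1/(1-\alpha)$, and therefore \eqref{compatible-1} follows with $k=p^n\bigl(1+\tfrac{p-1}{1-\alpha}\bigr)$.

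There is no substantive obstacle here: the argument is a chain of elementary identities together with one rank-one semidefinite estimate. The only points that require care are bookkeeping — verifying that $\varphi^*$ is twice continuously differentiable with positive-definite Hessian away from the origin, so that the matrix determinant lemma and the inverse $(\nabla^2\varphi^*)^{-1}$ are meaningful (this is Lemma \ref{vainshat0}); that $\varphi^*(y)>0$ for $y\neq o$ so the divisions by $\varphi^*$ are legitimate; and that the quadratic-form test in the last step still yields the asserted bound in the degenerate case $t=0$.
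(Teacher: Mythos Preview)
Your argument is correct. For part ii) you and the paper do exactly the same thing: expand $\nabla^2(\varphi^*)^p$ as a rank-one perturbation of $p(\varphi^*)^{p-1}\nabla^2\varphi^*$ and apply the matrix determinant lemma. For part i), however, you take a genuinely different route. The paper treats i) independently of ii): writing $A=\nabla^2\varphi^*+(\alpha-1)(\varphi^*)^{-1}\nabla\varphi^*\otimes\nabla\varphi^*\succeq 0$, it decomposes the rank-one perturbation as $\bigl(1+\tfrac{p-1}{1-\alpha}\bigr)\nabla^2\varphi^*-\tfrac{p-1}{1-\alpha}A$ and then uses the Minkowski-type inequality $\det(M)\geq\det(M-N)$ for $N\succeq 0$ to bound the determinant directly. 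You instead prove i)$\Rightarrow$ii) by testing the semidefinite condition $\nabla^2\varphi^*\succeq\tfrac{1-\alpha}{\varphi^*}\nabla\varphi^*\otimes\nabla\varphi^*$ against the single vector $w=(\nabla^2\varphi^*)^{-1}\nabla\varphi^*$, which yields $t\leq\varphi^*/(1-\alpha)$, i.e.\ condition ii) with $k_1=1/(1-\alpha)$; then ii)$\Rightarrow$\eqref{compatible-1} does the rest. Your approach is more unified (everything factors through ii)) and in fact produces a sharper constant, $k=p^n\bigl(1+\tfrac{p-1}{1-\alpha}\bigr)$, whereas the paper's decomposition gives $k=p^n\bigl(1+\tfrac{p-1}{1-\alpha}\bigr)^n$. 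The paper's method, on the other hand, avoids inverting $\nabla^2\varphi^*$ altogether in the treatment of i).
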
 

\begin{proof} For $p>1$, condition \eqref{compatible-1} is equivalent to,  for any $y\neq o$,  
	\begin{equation} \label{compatible-1-22}  H(y)= \det\Big(\nabla^2 \varphi^* (y)+
	(p-1)
	(\varphi^*(y))^{-1} \nabla\varphi^*(y) \otimes \nabla\varphi^* (y)\Big)\leq k  p^{-n} \det\big(\nabla^2 \varphi^* (y) \big).
	\end{equation}
	
	\vskip 2mm \noindent i) Let $\alpha\in [0, 1)$ and the function  $\frac{1}{\alpha} (\varphi^*)^{\alpha}$  be convex. For any $o\neq y\in \R^n$,  $$A(y)=\nabla^2 \varphi^* (y)+(\alpha-1)(\varphi^*(y))^{-1} \nabla\varphi^*(y)\otimes \nabla\varphi^*(y)  $$ is a positive semi-definite matrix.  Therefore, \eqref{matrix-ineq}  yields 
	\begin{align*}
	H(y) &=\det\!\Big(\! 
	\Big(1+\frac{p-1}{1-\alpha}\Big)
	\nabla^2 \varphi^* (y)+
	\frac{p-1}{\alpha-1} 
	A (y)\!\Big) \leq  \Big(1+   \frac{p-1}{1-\alpha}\Big)^n \det\Big(\nabla^2 \varphi^* (y)\Big) . 
	\end{align*} 
	Hence, \eqref{compatible-1-22}  and   condition \eqref{compatible-1} hold true. 
	
	\vskip 2mm \noindent ii) It can be calculated that $\mathbb{I}_n+z\otimes z$ for any $z\in \R^n$ has its determinant to be $1+|z|^2$. Hence, for any $o\neq y\in \R^n$,  \eqref{condition-5-31-1}  yields \begin{align*}  H(y) &=\det\big(\nabla^2 \varphi^* (y)\big)\Big(1+  (p-1) (\varphi^*(y))^{-1} \Big\langle \nabla \varphi^*(y), \big(\nabla^2\varphi^*(y)\big)^{-1} \nabla \varphi^*(y)\Big\rangle\Big) \\ &\leq \det\big(\nabla^2 \varphi^* (y)\big)\Big(1+k_1  (p-1) \Big). \end{align*}  This implies \eqref{compatible-1-22}  and hence condition \eqref{compatible-1} holds ture. 
\end{proof} 

\section{The $L_p$ Minkowski problem for log-concave functions}\label{functional p Minkowski problem} \setcounter{equation}{0}
This section aims to investigate the $L_p$ Minkowski problem for log-concave functions. Actually Theorem \ref{variationformula} suggests a new measure for log-concave functions. Let $\L^+=\{\varphi\in \L: \varphi\geq 0\}$, $\Omega_{\varphi^*}=\{y\in \R^n:  0<\varphi^*(y)<+\infty\}$ and $\widetilde{\Omega}_{\varphi^*}=\{y\in \R^n:  \varphi^*(y)=0\}.$ The set $\Omega_{\varphi^*} $ is always assumed to be nonempty. The subscript $\varphi^*$ is often omitted if there is no confusion. 

The  $L_p$  surface area measure of $f$ can be defined as follows. 
\begin{definition} \label{p-def-moment-measure}  Let $f=e^{-\varphi}$ be a log-concave function with $\varphi \in \L$ such that $\varphi^*\in \L^+$ and $\Omega$ is nonempty.  For $p\in \R$, the $L_p$  surface area measure of $f$, denoted by $\mu_p(f, \cdot)$, is the Borel measure on $\Omega$ such that  \begin{equation}\label{moment-form-p-6-1} \int_{\Omega}g(y)\, d\mu_p (f, y)=\int_{\{x\in \dom(\varphi):\   \nabla \varphi(x)\in \Omega \}}g(\nabla \varphi(x))(\varphi^*(\nabla \varphi(x)))^{1-p} e^{-\varphi(x)} \,dx\end{equation} holds for every Borel function $g$ such that $g \in L ^ 1 (\mu_p (f,\cdot))$  or $g$ is non-negative. \end{definition}	

 In general,  $\,d\mu_{p}(f, \cdot)=(\varphi^*)^{1-p} \,d\mu_1(f, \cdot)$  on  $\Omega$.  The $L_p$ surface area measure of $f$ for $p=1$ in Definition \ref{p-def-moment-measure}  is  the restriction of the surface area measure $f$ given in \eqref{moment-form-1} on $\Omega$, i.e.,  $\mu_1(f, \cdot)=\mu(f, \cdot)|_{\Omega}$. If  the Lebesgue measure of $\widetilde{\Omega}$ is zero,  $\mu_1(f, \cdot)$ can be extended to $\R^n$ (more precisely the interior of $\dom(\varphi^*)$), and reduces to $\mu(f, \cdot)$.   The measure $\mu_1(f, \cdot)$ is always finite for $f\in \A$. 
As $\varphi^*=0$ on $\widetilde{\Omega}$, one can even extend $\mu_p(f, \cdot)$ for $p<1$ to $\mathrm{int}(\dom(\varphi^*))$. 
When $p=0$, one has the $L_0$ (or  the logarithmic) surface area measure of $f$ which is again a finite measure for $f\in \A$ based on \eqref{finitezza-1}. 

A natural problem to characterize the $L_p$  surface area measure of  a log-concave function $f$ can be formulated as follows.  
\begin{problem}[The $L_p$ Minkowski problem for log-concave functions] \label{problem-lp-6-2}  Let $\nu$ be a  finite nonzero Borel measure on $\mathbb{R}^n$ and $ p\in \R$. Find the necessary and/or sufficient conditions on $\nu$, so that,    $$\nu=\tau \mu_{p}(f,\cdot)\ \ \ \mathrm{or}\ \ \ (\varphi^*)^{p-1}\nu =\tau\mu_1(f, \cdot)$$ hold for some log-concave function $f=e^{-\varphi}$ and $\tau\in\R$.\end{problem} 

When $p=1$ and the Lebesgue measure of $\widetilde{\Omega}$ is zero, Problem \ref{problem-lp-6-2} for $p=1$  reduces to the Minkowski problem for moment measures investigated in \cite{CK15} by Cordero-Erausquin and  Klartag. See \cite[Section 7]{CF13} for a full formulation to this problem. In this case, a solution has been provided in \cite{CK15}, including \cite[Proposition 1]{CK15} for necessity and  \cite[Theorem 2]{CK15} for sufficiency and uniqueness.

When $f=e^{-\varphi}$ is smooth enough so that $\nabla \varphi: \mathrm{int}(\dom(\varphi)) \rightarrow \mathrm{int}(\dom(\varphi^*))$ is smooth and bijective, then formula \eqref{def-dual-2} and Definition \ref{p-def-moment-measure} deduce  that, by letting $y=\nabla \varphi(x)$,  
\begin{align*}
\int_{\Omega}g(y)\, d\mu_p (f, y)&=\int_{\{x\in \R^n:\   \nabla \varphi(x)\in \Omega \}}g(\nabla \varphi(x))(\varphi^*(\nabla \varphi(x)))^{1-p} e^{-\varphi(x)} \,dx \\
&=\int_{\Omega}g(y) \varphi^*(y)^{1-p}\det(\nabla^2\varphi^*(y))e^{\varphi^*(y)-\langle y, \nabla\varphi^*{y}\rangle}\,dy, 
\end{align*} holds for every Borel function $g$ such that $g \in L ^ 1 (\mu_p (f,\cdot))$  or $g$ is non-negative. That is,   $\mu_{p}(f,\cdot)$ for $p\in\R$  is absolutely continuous with respect to the Lebesgue measure and satisfies that 
\begin{align*}
\frac{d\mu_{p}(f, y)}{dy}=\varphi^*(y)^{1-p} e^{\varphi^*(y)-\langle y, \nabla\varphi^*{y}\rangle} \det(\nabla^2\varphi^*(y)) \ \ \mathrm{for} \ \  y\in \Omega. \end{align*}  Consequently,  if $\nu$ admits a density function with respect to the Lebesgue measure, say  $\,d\nu(y)=h(y)\,dy,$  then finding a solution to the $L_p$ Minkowski problem for log-concave functions requires to obtain a (smooth enough) convex function $\varphi$ satisfying the following Monge-Amp\`{e}re equation:  
\begin{align*}
h(y) &=  \tau  \varphi^*(y)^{1-p} e^{\varphi^*(y)-\langle y, \nabla\varphi^*{y}\rangle} \det(\nabla^2\varphi^*(y)), \ \ \ \mathrm{for} \ y\in \Omega,   
\end{align*}  where $\tau\in\R$ is a constant.

Our main goal in this section is to provide a solution to Problem \ref{problem-lp-6-2} for $p>1$.  
Let $M_{\nu}$ be the interior of the convex hull of the support of the Borel measure $\nu$.   For convenience, denote by $\mathscr{M}$  the set of all {\em even finite  nonzero  Borel measures} on $\R^n$, such that, if $\nu\in \mathscr{M}$, then  $\nu$ is not supported in a lower-dimensional subspace of $\R^n$, $\nu(M_{\nu}\setminus L)>0$ holds for any bounded convex set $L\subset\R^n$, and 
\begin{equation} \label{p-th-moment} \int_{\R^n} |x|^p\, d\nu(x)<\infty.\end{equation} Note that the conditions for $\nu\in \mathscr{M}$ are all natural. Indeed, the assumption that $\nu$ is not supported in a lower-dimensional subspace of $\R^n$ is essential in the solution to the Minkowski problem for moment measures in \cite{CK15}. The requirement for \eqref{p-th-moment}  is to guarantee that the optimization problem \eqref{mini problem} is not taken over an empty set. Finally, the condition that $\nu(M_{\nu}\setminus L)>0$ holds for any bounded convex set $L\subset\R^n$ is to guarantee  that $\nu(M_{\nu}\setminus \widetilde{\Omega}_{\varphi})>0$ for any $\varphi$ and thus avoid that $\Omega_{\varphi}$ being either empty or a null set.  

\begin{theorem}  \label{solution-lp-mink-6-2} Let $\nu\in \mathscr{M}$. For  $p>1$,  there exists an even log-concave function  $f=e^{-\varphi}$, such that,  $\varphi\in \C$  is even and lower semi-continuous, $\varphi^*\in \L^+$, and
	\begin{eqnarray*}
		\nu =\tau (\varphi^*)^{1-p} \mu_{1}(f,\cdot) =\tau \mu_{p}(f, \cdot)\ \ \ \mathrm{on} \ \ \Omega,
	\end{eqnarray*}  where the constant $\tau$ takes the following formula: 
	$$ \tau= \frac{\int_{\Omega} (\varphi^*(y))^{p-1}\,d\nu(y)}  {\int_{\Omega} \,d \mu_{1}(f,y) } = \frac{\int_{\Omega} \,d\nu(y)}  {\int_{\Omega} \,d \mu_{p}(f,y) } .$$ \end{theorem}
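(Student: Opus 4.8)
The plan is to solve Problem~\ref{problem-lp-6-2} for $p>1$ by the direct method of the calculus of variations, in the spirit of Lutwak's solution of the even $L_p$ Minkowski problem for convex bodies and of Cordero-Erausquin--Klartag's treatment of the case $p=1$ in \cite{CK15}. Fix $p>1$ and $\nu\in\mathscr{M}$, and consider the minimization problem \eqref{mini problem}: minimize
\[
\Phi(\varphi)=\int_{\Omega_{\varphi^*}}\bigl(\varphi^*(y)\bigr)^{p}\,d\nu(y)
\]
over the class $\mathcal{E}$ of all even $\varphi\in\L_0$ with $\Omega_{\varphi^*}\neq\emptyset$ and $\int_{\R^n}e^{-\varphi}\,dx=1$. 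First I would check that \eqref{mini problem} is meaningful: by \eqref{p-th-moment}, for the (unique) $r>0$ with $V(B_r)=1$ the function $\varphi=\mathbf{I}_{B_r}$ lies in $\mathcal{E}$ (here $\varphi^*=h_{B_r}=r|\cdot|$, $\Omega_{\varphi^*}=\R^n\setminus\{o\}$, and $\Phi(\varphi)=r^{p}\int_{\R^n}|y|^{p}\,d\nu(y)<\infty$), so $m:=\inf_{\mathcal{E}}\Phi$ is finite; the remaining assumptions on $\nu$ --- full dimensionality and $\nu(M_\nu\setminus L)>0$ for every bounded convex $L$ --- will be used to keep $m>0$ and, more importantly, to control minimizing sequences. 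Note that evenness of $\varphi$ together with $\int e^{-\varphi}=1$ forces $\dom(\varphi)$ to be a full-dimensional origin-symmetric convex set, so $o\in\mathrm{int}(\dom\varphi)$ and hence $\varphi^*\in\L^+$ automatically on $\mathcal{E}$.

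\textbf{The core step --- and the main obstacle --- is the existence of a minimizer.} Let $\{\varphi_k\}\subset\mathcal{E}$ be a minimizing sequence, so $\Phi(\varphi_k)$ is bounded. Since each $\varphi_k\ge0$ is even and convex with $\int e^{-\varphi_k}=1$, its sublevel sets $\{\varphi_k\le t\}$ have volume at most $e^{t}$; combining this with the boundedness of $\Phi(\varphi_k)$ and the structural hypotheses on $\nu$ yields uniform two-sided control of these sublevel sets (equivalently, uniform local bounds on $\varphi_k$ and $\varphi_k^*$): the $\varphi_k$ cannot ``spread out'' (which would force $\varphi_k^*\to0$ and $e^{-\varphi_k}\to1$ near $o$, incompatible with $\int e^{-\varphi_k}=1$), they cannot ``concentrate'' (which would make $\varphi_k^*$, hence $\Phi(\varphi_k)$, arbitrarily large on a set of positive $\nu$-measure, as $\nu$ is full-dimensional), and the domains $\dom(\varphi_k^*)\supseteq\Omega_{\varphi_k^*}$ cannot collapse along any direction (ruled out by $\nu(M_\nu\setminus L)>0$). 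A selection argument for convex functions (cf.\ \cite{Roc70}) then gives a subsequence along which $\varphi_k^*\to w$ and $\varphi_k=(\varphi_k^*)^{*}\to\varphi:=w^{*}$ locally uniformly on the interiors of the relevant domains, with $\varphi$ even, convex, lower semicontinuous and $\varphi^*=w\in\L^+$. The delicate point is that no mass escapes to infinity, i.e.\ $\int_{\R^n}e^{-\varphi}\,dx=1$; this follows from the uniform boundedness of the sublevel sets of $e^{-\varphi_k}$ together with the monotone (or dominated) convergence theorem. Finally $\Phi(\varphi)\le\liminf_k\Phi(\varphi_k)$ by Fatou's lemma --- using $\varphi_k^*\to\varphi^*$ $\nu$-a.e.\ on $\Omega_{\varphi^*}$, the exceptional set being $\nu$-null by the structure of $\nu$ --- and $\nu(\Omega_{\varphi^*})>0$, so $f=e^{-\varphi}$ attains the minimum.

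It remains to derive the Euler--Lagrange equation from the minimality of $f=e^{-\varphi}$. Fix an even $g=e^{-\psi}\in\A_0$; for $t>0$ put $f_t=f\oplus_p t\cdot_p g$, so that $(\varphi_t^*)^{p}=(\varphi^*)^{p}+t(\psi^*)^{p}$, whence $\int_{\Omega}(\varphi_t^*)^{p}\,d\nu=\Phi(\varphi)+t\int_{\Omega}(\psi^*)^{p}\,d\nu$, while $D:=\delta J_p(f,g)=\tfrac{d}{dt}J(f_t)\big|_{t=0^{+}}$ exists in $[0,+\infty]$ by Lemma~\ref{diffe}. To restore the constraint, replace $f_t$ by $\mu(t)\cdot_p f_t$, choosing $\mu(t)>0$ so that $J(\mu(t)\cdot_p f_t)=1$; since $f\oplus_p t\cdot_p f=(1+t)\cdot_p f$ (see the proof of Lemma~\ref{specialcaseoffg}) a direct computation gives $\tfrac{d}{d\mu}J(\mu\cdot_p f)\big|_{\mu=1}=\delta J_p(f,f)$, which is strictly positive (as $\Omega_{\varphi^*}\neq\emptyset$ rules out $\varphi$ being positively $1$-homogeneous) and whose value is supplied by Lemma~\ref{specialcaseoffg}, so $\mu(t)=1-t\,D/\delta J_p(f,f)+o(t)$. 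Since $\bigl((\mu\cdot_p f)^{*}\bigr)^{p}=\mu\,(\varphi^*)^{p}$, the perturbed value of $\Phi$ is $\mu(t)\bigl(\Phi(\varphi)+t\int_{\Omega}(\psi^*)^{p}\,d\nu\bigr)$, and minimality at $t=0$ forces, upon differentiating,
\[
\int_{\Omega}\bigl(\psi^*\bigr)^{p}\,d\nu\ \ge\ \frac{\Phi(\varphi)}{\delta J_p(f,f)}\,\delta J_p(f,g)\qquad\text{for every even }g=e^{-\psi}\in\A_0.
\]
For $g$ in a family of admissible $p$-perturbations of $f$ satisfying \eqref{compatible-1} (rich enough to control Borel measures on $\Omega$; if the minimizer is not a priori smooth one first regularizes or argues by approximation), Theorem~\ref{variationformula} gives $\delta J_p(f,g)=\tfrac1p\int_{\Omega}(\psi^*)^{p}\,d\mu_p(f,\cdot)$, so the displayed inequality becomes $\nu\ge\tau\,\mu_p(f,\cdot)$ on $\Omega$ with $\tau=\Phi(\varphi)\big/\bigl(p\,\delta J_p(f,f)\bigr)$. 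This inequality is an equality when $g=f$: indeed $\Phi(\varphi)=\int_{\Omega}(\varphi^*)^{p}\,d\nu$, while an integration by parts together with Lemma~\ref{specialcaseoffg} yields $p\,\delta J_p(f,f)=\int_{\Omega}\varphi^*\,d\mu_1(f,\cdot)=\int_{\Omega}(\varphi^*)^{p}\,d\mu_p(f,\cdot)$, so $\int_{\Omega}(\varphi^*)^{p}\,d(\nu-\tau\mu_p(f,\cdot))=0$. As $\nu-\tau\mu_p(f,\cdot)\ge0$ on $\Omega$ and $(\varphi^*)^{p}>0$ on $\Omega$, this forces $\nu=\tau\,\mu_p(f,\cdot)=\tau(\varphi^*)^{1-p}\mu_1(f,\cdot)$ on $\Omega$. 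Testing this identity against the constant function $1$ on $\Omega$, and recalling $\mu_p(f,\cdot)=(\varphi^*)^{1-p}\mu_1(f,\cdot)$, gives $\tau=\int_{\Omega}d\nu\big/\int_{\Omega}d\mu_p(f,\cdot)=\int_{\Omega}(\varphi^*)^{p-1}\,d\nu\big/\int_{\Omega}d\mu_1(f,\cdot)$, which is the asserted value of $\tau$, completing the proof.
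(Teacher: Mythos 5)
Your overall strategy (direct method plus an Euler--Lagrange argument) is in the spirit of the paper, but the variational step as you set it up has a genuine gap. You perturb the minimizer only through the one-sided $L_p$ Asplund sum $f\oplus_p t\cdot_p g$, $t>0$, so after renormalization you obtain a one-sided inequality tested only against functions of the form $(\psi^*)^p$, i.e.\ $p$-th powers of nonnegative convex functions vanishing at $o$, and moreover only for those $\psi$ for which Theorem \ref{variationformula} applies. Two problems follow. First, Theorem \ref{variationformula} requires $f=e^{-\varphi}\in\A_0'$ (strict convexity, $\C^1(\R^n)\cap\C^2_+(\R^n\setminus\{o\})$, condition \eqref{compatible-1}); the minimizer produced by a compactness argument has none of this regularity, and ``regularize or argue by approximation'' is precisely the step that would have to be carried out and is not. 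Second, even granting the formula $\delta J_p(f,g)=\frac1p\int_\Omega(\psi^*)^p\,d\mu_p(f,\cdot)$, an inequality $\int h\,d\nu\geq\tau\int h\,d\mu_p(f,\cdot)$ valid only for $h$ in the cone of convex powers does \emph{not} imply the measure inequality $\nu\geq\tau\mu_p(f,\cdot)$ on $\Omega$: such $h$ are globally ``bowl-shaped'' and cannot approximate bump functions, so they do not detect domination of measures. Since your final argument (a nonnegative measure integrating the positive function $(\varphi^*)^p$ to zero must vanish) rests on $\nu-\tau\mu_p(f,\cdot)\geq0$, the conclusion $\nu=\tau\mu_p(f,\cdot)$ does not follow from what you have established. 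In addition, the existence step is asserted rather than proved: the ``cannot spread out / cannot concentrate / cannot collapse'' heuristics are exactly where the quantitative input is needed.

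For comparison, the paper avoids both difficulties by working throughout in the conjugate variable $\phi=\varphi^*$: it minimizes the unconstrained functional $\mathbf{\Phi}_{p,\nu}(\phi)=\frac1p\int\phi^p\,d\nu-\log J(e^{-\phi^*})$ of \eqref{mini problem} (coercivity comes from the functional Blaschke--Santal\'o inequality and its reverse via Lemma \ref{lemma1.3}, and compactness from the pointwise bound \eqref{estimate-p-6-12} together with Rockafellar's selection theorem, Lemma \ref{existence}), and then perturbs \emph{additively}, $\phi_t=\phi_0+tg$ with $g$ an arbitrary even continuous function compactly supported in $\Omega_{\phi_0}$ and $t$ ranging over a two-sided interval. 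Differentiating $\log J(e^{-\phi_t^*})$ via the Berman--Berndtsson lemma gives an exact Euler--Lagrange \emph{equation} against a class of test functions rich enough to identify the measure, with no smoothness hypotheses on the minimizer and no appeal to Theorem \ref{variationformula}. If you want to salvage your route, you would either have to enlarge your perturbation class to two-sided additive perturbations of $(\varphi^*)^p$ (which essentially reproduces the paper's argument) or prove a version of the variational formula valid for the nonsmooth minimizer together with a separate argument upgrading the cone-tested inequality to an identity of measures.
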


Before we prove Theorem \ref{solution-lp-mink-6-2}, we need some preparation.  Let $p>1$ and $\nu\in \mathscr{M}$. Denote by  $L_{p, e} (\nu)$ the set of even non-negative  functions on $\R^n$ which have finite $L^p$ norm with respect to the measure $\nu$ and whose function values are $0$ at $o$.  For $\phi\in L_{p, e} (\nu)$, let  
\begin{align*}
\mathbf{\Phi}_{p,\nu}(\phi)=\frac{1}{p}\int_{\R^n} \!\!  \phi(x)^pd\nu(x)-\log J(e^{-\phi^*}).
\end{align*} To find a solution to Problem \ref{problem-lp-6-2}, 
one needs to search for a solution to   
\begin{eqnarray}\label{mini problem}
\Theta=\inf\Big\{\mathbf{\Phi}_{p,\nu}(\phi):   \phi \in L_{p, e}(\nu) \ \ \mathrm{and} \ \ 0< J(e^{-\phi^*})<\infty  \Big\}.
\end{eqnarray}

The following lemma shall be needed to solve \eqref{mini problem}. Similar results for $p=1$ can be found in \cite[Lemmas 14 and 15]{CK15}.

\begin{lemma}\label{lemma1.3} Let $p>1$ and $\nu\in \mathscr{M}$.  Then there exists a constant   $c_{\nu}>0$, such that, for any  $\phi \in L_{p, e}(\nu)\cap \C$ satisfying $0<J(e^{-\phi^*})<\infty$, the following holds: 
	\begin{eqnarray} \int_{\R^n}\phi(x)^p\,d\nu(x)\geq   c_{\nu}\big(J(e^{-\phi^*})\big)^{\frac{1}{n}}-\int_{\R^n} \,d\nu(x).\label{equation-6-10}\end{eqnarray}
\end{lemma}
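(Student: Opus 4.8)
The plan is to reduce the statement to a Minkowski-type functional inequality bounding $J(e^{-\phi^*})$ from above by a geometric quantity controlled by $\int_{\R^n}\phi^p\,d\nu$, in the spirit of the argument for $p=1$ in \cite[Lemmas 14 and 15]{CK15}. First I would note that since $\phi\in L_{p,e}(\nu)\cap\C$ is even, non-negative, vanishing at $o$, and lower semi-continuous with $0<J(e^{-\phi^*})<\infty$, its Fenchel conjugate $\phi^*$ is an even log-concave-type convex function with $\phi^*(o)=0$; in particular $e^{-\phi^*}\in\A$ up to the obvious normalization, and the level set $K=\{x:\phi^*(x)\le 1\}=\{x:e^{-\phi^*(x)}\ge e^{-1}\}$ is an origin-symmetric convex body (nonempty because $o\in\mathrm{int}(\dom(\phi^*))$ is forced by $J(e^{-\phi^*})<\infty$). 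The key observation is that $h_K(x)\le$ (a constant times) $\phi(x)$ for all $x$: indeed, by \eqref{def-dual-1} and the definition of $K$, for any $y\in K$ one has $\langle x,y\rangle\le \phi(x)+\phi^*(y)\le \phi(x)+1$, so $h_K(x)=\sup_{y\in K}\langle x,y\rangle\le\phi(x)+1$, and using positive homogeneity together with $\phi(o)=0$ a clean scaling gives a pointwise bound of the form $h_K\le C\,\phi$ on the relevant cone. Thus $\int_{\R^n} h_K(x)^p\,d\nu(x)\le C^p\int_{\R^n}\phi(x)^p\,d\nu(x)$, which is finite precisely because $\phi\in L_{p,e}(\nu)$.

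Next I would bound $J(e^{-\phi^*})$ in terms of $V(K)$. Since $\phi^*$ is convex, even, and $\phi^*\ge 0$ with $\phi^*(o)=0$, a layer-cake / Markov-type estimate gives $J(e^{-\phi^*})=\int_{\R^n}e^{-\phi^*(x)}\,dx=\int_0^\infty e^{-s}V(\{\phi^*\le s\})\,ds$, and by convexity $\{\phi^*\le s\}\subseteq s\,\{\phi^*\le 1\}=sK$ for $s\ge 1$ while $\{\phi^*\le s\}\subseteq K$ for $s\le 1$; hence $J(e^{-\phi^*})\le V(K)\big(1+\int_1^\infty s^n e^{-s}\,ds\big)=c_n V(K)$ for a dimensional constant $c_n$. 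Combining, it suffices to prove $V(K)^{1/n}\le c_n'\big(\int_{\R^n}h_K^p\,d\nu+\nu(\R^n)\big)$ with $c_n'$ depending only on $n$ and $\nu$. This is the heart of the matter: it is a lower bound on a ``mixed-volume-type'' integral $\int h_K^p\,d\nu$ by $V(K)^{1/n}$, valid uniformly over all origin-symmetric convex bodies $K$, and it is here that the hypothesis $\nu\in\mathscr{M}$ enters essentially — specifically that $\nu$ is not concentrated on a proper subspace (so that $h_K$ cannot be small $\nu$-a.e.\ unless $K$ is small) and that $\nu(M_\nu\setminus L)>0$ for every bounded convex $L$.

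To prove that geometric inequality I would argue by contradiction / compactness: suppose there is a sequence of origin-symmetric convex bodies $K_j$ with $V(K_j)=1$ (after rescaling, using $p$-homogeneity of $\int h_K^p\,d\nu$ in $K$ only approximately — more carefully, one normalizes $V(K_j)\to\infty$ while $\int h_{K_j}^p\,d\nu$ stays bounded) and $\int_{\R^n}h_{K_j}^p\,d\nu\to 0$. By John's theorem and symmetry one can extract, after applying volume-preserving linear maps, a subsequence whose ``shape'' stabilizes; since $h_{K_j}\to 0$ in $L^p(\nu)$ and $h_{K_j}$ is a support function, one deduces $h_{K_j}\to 0$ $\nu$-a.e., forcing $K_j$ to collapse onto a subspace orthogonal to the support of $\nu$, which by the non-degeneracy of $\nu$ must be $\{o\}$, contradicting $V(K_j)=1$. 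The delicate point — and the step I expect to be the main obstacle — is making the compactness argument uniform: controlling the linear maps so that the limiting behavior is genuinely captured, and quantifying how the condition $\nu(M_\nu\setminus L)>0$ for all bounded convex $L$ prevents $\int h_K^p\,d\nu$ from degenerating even when $K$ is a long thin slab; this is exactly the phenomenon that the analogous $p=1$ lemmas in \cite{CK15} handle, and the passage to general $p>1$ requires only that $x\mapsto |x|^p$ be $\nu$-integrable (our hypothesis \eqref{p-th-moment}) so that $h_K(x)\le |x|\,\mathrm{diam}(K)$ keeps the integrals finite. Finally, the additive term $-\int_{\R^n}d\nu(x)$ on the right of \eqref{equation-6-10} is precisely the slack absorbed when passing from the $\phi^*\le 1$ truncation back to $\phi$, and tracking it through the estimates above yields the stated inequality with $c_\nu>0$ depending only on $n$ and $\nu$.
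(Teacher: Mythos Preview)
Your overall strategy --- pass to the sublevel set $K=\{\phi^*\le 1\}$, bound $J(e^{-\phi^*})\le c_n V(K)$ by layer-cake, and then control $V(K)^{1/n}$ by $\int h_K^p\,d\nu$ via $h_K\le\phi+1$ --- is a legitimate dual route and differs from the paper's. However, two steps as written do not go through. First, the claim ``$h_K\le C\,\phi$'' is false: $\phi$ is not homogeneous, and if for instance $\phi=\mathbf{I}_L$ for some $L\in\K_{(o)}^n$ then $\phi\equiv 0$ on $L$ while $h_K>0$ there. The honest bound $h_K\le\phi+1$ that you did derive already suffices, since $h_K^p\le 2^{p-1}(\phi^p+1)$ feeds exactly into the additive $-\int d\nu$ on the right of \eqref{equation-6-10}. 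Second, and more seriously, your compactness argument for $V(K)^{1/n}\lesssim \int h_K^p\,d\nu+1$ is broken: you cannot apply volume-preserving linear maps to the $K_j$, because $\nu$ is a \emph{fixed} measure and $\int h_{TK}^p\,d\nu\neq\int h_K^p\,d\nu$; nor does boundedness of $\int h_{K_j}^p\,d\nu$ yield $h_{K_j}\to 0$ in $L^p(\nu)$. A direct argument does close this gap: take the John ellipsoid $A B_1\subseteq K$, so that $h_K(x)\ge|A^Tx|\ge\sigma_1(A)\,|\langle x,u\rangle|$ for the top singular direction $u$, whence $\int h_K^p\,d\nu\ge\sigma_1^p\,m_\nu^{(p)}$ with $m_\nu^{(p)}=\inf_{\theta\in \sphere}\int|\langle x,\theta\rangle|^p\,d\nu>0$; since $\sigma_1\ge(\det A)^{1/n}\gtrsim V(K)^{1/n}$, one gets $V(K)^{1/n}\le C_\nu\big(\int h_K^p\,d\nu\big)^{1/p}$, and \eqref{equation-6-10} follows after splitting into the cases $\int h_K^p\,d\nu\gtrless 1$.

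For comparison, the paper's proof is shorter and takes the primal route: it works with $K_\phi=\{\phi\le 1\}$ rather than $\{\phi^*\le 1\}$, picks a direction $\theta_0$ in which $K_\phi$ is thin (width at most its volume-radius $\mathrm{vrad}(\phi)$), and uses convexity of $\phi$ to obtain the pointwise bound $\phi^p(x)\ge|\langle x,\theta_0\rangle|/\mathrm{vrad}(\phi)-1$. Integrating against $\nu$ and then invoking the functional Blaschke--Santal\'o inequality to convert the resulting lower bound on $J(e^{-\phi})^{-1/n}$ into one on $J(e^{-\phi^*})^{1/n}$ gives \eqref{equation-6-10} with an explicit constant. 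Your dual approach avoids Blaschke--Santal\'o at the cost of the John-ellipsoid step; note that neither proof uses the hypothesis $\nu(M_\nu\setminus L)>0$ for bounded convex $L$ --- only that $\nu$ is not supported in a hyperplane is needed here.
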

\begin{proof} Let $p>1$ and $\nu\in \mathscr{M}$. Assume that $0<J(e^{-\phi^*})<\infty$. For any  $\phi \in L_{p, e}(\nu)\cap \C$, $\phi^*$ must be an even convex function.  It is well-known that there exist constants $c_1, c_2>0$ such that for any $n\in \mathbb{N}$ and any even log-concave function $f=e^{-\phi}$ with $J(e^{-\phi})\in (0, \infty)$, the following holds: \begin{align}\label{santalo-func-6-10} c_1^n\leq J(e^{-\phi}) J(e^{-\phi^*})\leq c_2^n. \end{align} We refer the readers to \cite[Theorem 1.1]{KM05} for more details on inequality \eqref{santalo-func-6-10}. The sharp constant for $c_2$  is $2\pi$ and the upper bound is indeed the Blaschke-Santal\'{o} inequalities for (even) log-concave functions, see e.g.,  \cite{AKM04, Ball88-2, FM07,FM08}.  Applying inequality \eqref{santalo-func-6-10} to the log-concave function $e^{-\phi^*}$, one gets that $0<J(e^{-\phi})<\infty$.  	
	
	For  $\phi \in L_{p, e}(\nu)\cap \C$ satisfying $0<J(e^{-\phi^*})<\infty$, let
	$ K_{\phi}=\{x\in \mathbb{R}^n: \phi(x)\leq 1\}.$ Clearly,  $K_{\phi}$ is an origin-symmetric bounded  convex set  containing the origin $o$ in its interior (due to $\phi(o)=0$).  As  $0< J(e^{-\phi})<\infty$, one has $V(K_{\phi})<\infty$. Denote by $\mathrm{vrad}(\phi)$ the volume radius of $K_{\phi}$, that is, $$\mathrm{vrad}(\phi)=\bigg(\frac{V(K_{\phi})}{V(\ball)}\bigg)^{1/n}\in (0, \infty).$$ Note that $K_{\phi}$ cannot contain an Euclidean ball whose radius is  greater than $\mathrm{vrad}(\phi)$. Consequently,  one may find a vector $\theta_0\in S^{n-1}$ such that \begin{eqnarray}\label{an upper bound}
	\sup_{x\in K_{\phi}}|\langle x,\theta_0\rangle|=\sup_{x\in K_{\phi}}\langle x,\theta_0\rangle \leq \mathrm{vrad}(\phi).
	\end{eqnarray}
	The fact that $\phi$ is convex yields that, for $p>1$ and for any $x\in \R^n$ with $|\langle x,\theta_0\rangle|\geq   \mathrm{vrad}(\phi)$, \begin{eqnarray}\label{an upper bound2}
	\phi^p\left(\frac{ \mathrm{vrad}(\phi)}{|\langle x,\theta_0\rangle|}x\right)&\leq& \frac{ \mathrm{vrad}(\phi)}{|\langle x,\theta_0\rangle|}\phi^p(x).\end{eqnarray}  According to \eqref{an upper bound} and \eqref{an upper bound2}, if $x\in \R^n$ such that $|\langle x,\theta_0\rangle|\geq  \mathrm{vrad}(\phi)$, then $\phi(x)\geq 1$ and 
	\begin{align}\label{an upper bound3}
	\phi^p(x)\geq \frac{|\langle x,\theta_0\rangle|}{\mathrm{vrad}(\phi)} \phi^p\left(\frac{\mathrm{vrad}(\phi)}{|\langle x,\theta_0\rangle|}x\right)
	\geq \frac{|\langle x,\theta_0\rangle|}{\mathrm{vrad}(\phi)} 	\geq \frac{|\langle x,\theta_0\rangle|}{\mathrm{vrad}(\phi)} -1.
	\end{align} Indeed, 	(\ref{an upper bound3}) holds for all $x\in \R^n$ as it is trivial to have (\ref{an upper bound3}) for those $x\in \R^n$ such that $|\langle x,\theta_0\rangle|<  \mathrm{vrad}(\phi)$, due to $\phi(x)\geq 0$.  Consequently, 
	\begin{eqnarray*}
		\int_{\R^n}\phi^p(x)d\nu(x)\geq \frac{1}{ \mathrm{vrad}(\phi) }\int_{\R^n}|\langle x,\theta_0\rangle|d\nu(x)-\int_{\R^n} \,d\nu(x)\geq \frac{m_{\nu}}{ \mathrm{vrad}(\phi) }-\int_{\R^n} \,d\nu(x),
	\end{eqnarray*}
	where $m_{\nu}=\inf_{\theta\in S^{n-1}}\int_{\R^n}|\langle x,\theta\rangle|\, d\nu(x)$. Note that $m_{\nu}\in (0, \infty)$ is a direct consequence of the conditions on $\nu\in \mathscr{M}$, in particular, the function $\theta \mapsto \int_{\R^n}|\langle x,\theta\rangle|\, d\nu(x)$ is positive and continuous on $\theta$ due to the dominated convergence theorem.  On the other hand,  
	\begin{eqnarray*} \int_{\R^n}e^{-\phi(x)}\,dx \geq\int_{K_{\phi}}e^{-\phi(x)}\,dx \geq\frac{V(K_{\phi})}{e}=\frac{V(\ball)}{e}  \mathrm{vrad}(\phi)^n. \end{eqnarray*}  Therefore, an application of inequality \eqref{santalo-func-6-10} with  $c_2=2\pi$  immediately yields 
	\begin{align*}  \int_{\R^n}\phi^p(x)d\nu(x) &\geq  \frac{ (V(\ball))^{1/n} m_{\nu}}{e^{1/n}} \bigg(\int_{\R^n} e^{-\phi(x)}\,dx\bigg)^{-\frac{1}{n}}-\int_{\R^n} \,d\nu(x)  \nonumber \\ &
	\geq   c_{\nu} \bigg(\int_{\R^n} e^{-\phi^*(x)}\,dx\bigg)^{\frac{1}{n}}-\int_{\R^n} \,d\nu(x), \end{align*} 
	by letting  $c_{\nu}=\frac{ (V(\ball))^{1/n} m_{\nu}}{2\pi e^{1/n}}$. This concludes the desired formula \eqref{equation-6-10}. 
\end{proof}

It has been proved in  \cite[Lemma 16]{CK15} that,  if $\nu$ is a nonzero finite Borel measure on  $\R^n$ that is not supported in a lower-dimensional subspace of $\R^n$,  for $x_0\in M_{\nu}$,  then there exists a constant $C_{\nu, x_0}>0$ with the following property: $$
\phi(x_0)\leq C_{\nu,x_0}\int_{\R^n}\phi(x) \,d\nu(x)
$$ holds for any $\nu$-integrable convex function $\phi: \R^n\rightarrow [0,\infty]$. This can be applied to $\nu\in \mathscr{M}$ and $p>1$ to get that, for $x_0\in M_{\nu}$,  then there exists a constant $C_{\nu, x_0}>0$, such that, \begin{align}\label{estimate-p-6-12}
\phi^p(x_0)\leq C_{\nu,x_0}\int_{\R^n}\phi^p(x) \,d\nu(x)
\end{align}  holds for any  $\phi\in  \C\cap L_{p, e}(\nu)$.  

We shall need the following lemma given by \cite[Theorem 10.9]{Roc70}.  

\begin{lemma}\label{Rockafellar's result} Let $C$ be a relatively open convex set, and let $\phi_1, \phi_2,\cdots,$ be a sequence of finite convex functions on $C$. Suppose that the real number  $\phi_1(x), \phi_2(x),\cdots,$
	is bounded for each $x\in C$. It is then possible to select a subsequence of  $\phi_1, \phi_2,\cdots,$ which converges uniformly  on  closed  bounded subsets of $C$ to some finite convex function $\phi$.
\end{lemma}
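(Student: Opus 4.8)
The statement is \cite[Theorem 10.9]{Roc70}; we outline the classical proof, which upgrades pointwise boundedness to local uniform boundedness via convexity, deduces local equi-Lipschitz estimates, and then combines the Arzel\`a--Ascoli theorem with a diagonal extraction.

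The first and only delicate step is to show that pointwise boundedness of $\{\phi_i\}$ on $C$ forces \emph{uniform} boundedness on every compact subset. Fix a compact $K\subset C$ and a base point $p\in C$. Since $C$ is relatively open in its affine hull $\mathrm{aff}(C)$, the compact set $K\cup(2p-K)\cup\{p\}$ lies in the relative interior of a polytope $P\subset C$ with vertices $v_1,\dots,v_m\in C$. By convexity, $\phi_i\le\max_{1\le j\le m}\phi_i(v_j)$ on $P$, and since each $\{\phi_i(v_j)\}_i$ is bounded, there is an $M$ with $\phi_i\le M$ on $P\supseteq K$. For a lower bound, for every $x\in K$ the point $p$ is the midpoint of $x$ and $2p-x\in P$, so $\phi_i(p)\le\tfrac12\big(\phi_i(x)+\phi_i(2p-x)\big)$ and hence $\phi_i(x)\ge 2\phi_i(p)-M$; as $\{\phi_i(p)\}_i$ is bounded, $\{\phi_i\}$ is uniformly bounded on $K$.

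Given this, a standard estimate converts uniform bounds into Lipschitz bounds: if $|\phi_i|\le M$ on a compact convex $Q'\subset C$ and $Q\subset Q'$ is compact with distance (measured in $\mathrm{aff}(C)$) at least $r>0$ to the relative boundary of $Q'$, then each $\phi_i$ is $\tfrac{2M}{r}$-Lipschitz on $Q$ --- for $x,y\in Q$ with $x\ne y$, extend the segment past $y$ to the point $z=y+r\frac{y-x}{|y-x|}\in Q'$ and use $\phi_i(y)\le(1-t)\phi_i(x)+t\phi_i(z)$ with $t=\frac{|y-x|}{|y-x|+r}$. Thus $\{\phi_i\}$ is equi-Lipschitz, in particular equicontinuous, on every compact subset of $C$. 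Exhausting $C$ by compact convex sets $K_1\subset K_2\subset\cdots$ with $K_m\subset\mathrm{relint}(K_{m+1})$ and $\bigcup_m K_m=C$, the Arzel\`a--Ascoli theorem gives on each $K_m$ a uniformly convergent subsequence, and a diagonal argument yields a single subsequence $\{\phi_{i_j}\}$ converging uniformly on every $K_m$, hence on every compact subset of $C$. The limit $\phi$ is finite (it is locally bounded) and convex, since $\phi(\lambda x+(1-\lambda)y)=\lim_j\phi_{i_j}(\lambda x+(1-\lambda)y)\le\lim_j\big(\lambda\phi_{i_j}(x)+(1-\lambda)\phi_{i_j}(y)\big)=\lambda\phi(x)+(1-\lambda)\phi(y)$ for all $x,y\in C$ and $\lambda\in[0,1]$; this proves the lemma.

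The main obstacle is genuinely the first step: extracting a uniform bound on an arbitrary compact set from merely pointwise data, which relies on enclosing the set (together with a reflected copy through a base point) inside a polytope contained in $C$ and exploiting that a convex function is dominated above by its vertex values and below by the midpoint inequality. Everything afterwards --- the passage from uniform bounds to equicontinuity, the Arzel\`a--Ascoli/diagonal extraction, and the stability of convexity under pointwise limits --- is routine.
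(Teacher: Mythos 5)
The paper does not prove this lemma at all---it simply cites \cite[Theorem 10.9]{Roc70}---so the only question is whether your reconstruction of the classical argument is sound, and there is a genuine gap at precisely the step you call the only delicate one. You claim that for a compact $K\subset C$ and a base point $p\in C$ the set $K\cup(2p-K)\cup\{p\}$ lies inside a polytope $P\subset C$. This is false in general: the point reflection $2p-K$ of $K$ through a single fixed $p$ need not be contained in $C$ at all. For instance, let $C$ be an open triangle and $K$ the closed triangle obtained by shrinking $C$ by the factor $0.99$ about its centroid; for every choice of $p$, the set $2p-K$ is a congruent triangle rotated by $180^\circ$, and no such inverted copy fits inside $C$ (the largest inverted similar copy that fits is the medial triangle, i.e.\ scale $1/2$). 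Consequently your lower bound $\phi_i(x)\ge 2\phi_i(p)-M$ invokes $\phi_i(2p-x)\le M$ at points $2p-x$ where $\phi_i$ may not even be defined, and the uniform boundedness on compacta---on which the equi-Lipschitz estimate, the Arzel\`a--Ascoli step and everything after it rest---is not established.

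The step is fixable by localizing the reflection, which is what the standard proof does: cover $K$ by finitely many relative balls $B(x_j,r_j)$ with $B(x_j,2r_j)\subset C$; the vertex argument (enclosing $B(x_j,2r_j)$ in a simplex inside $C$) gives $\phi_i\le M_j$ on $B(x_j,2r_j)$, and for $x\in B(x_j,r_j)$ the reflected point $2x_j-x$ stays in $B(x_j,r_j)$, so $\phi_i(x)\ge 2\phi_i(x_j)-M_j$, where $\{\phi_i(x_j)\}_i$ is bounded by hypothesis; compactness then yields the two-sided uniform bound on $K$. (Alternatively, for $x\in K$ write $p=\lambda x+(1-\lambda)z$ with a small $\lambda>0$ chosen uniformly over $K$, so that $z$ lies in a fixed small relative ball about $p$ contained in $C$ where an upper bound holds, and solve for $\phi_i(x)$.) The remainder of your argument---the $2M/r$ Lipschitz estimate, the exhaustion by compacta with Arzel\`a--Ascoli and a diagonal extraction, and the finiteness and convexity of the limit---is correct as written.
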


We are now ready to prove  the following lemma. The case $p=1$ has been discussed in \cite[Lemma 17]{CK15}. 

\begin{lemma}\label{existence}Let $p>1$ and $\nu\in \mathscr{M}.$  Assume that $\phi_l\in \C\cap L_{p, e}(\nu)$  for any $l\in \mathbb{N}$  satisfy 
	\begin{eqnarray}\label{condition1}
	\sup_{l\in \mathbb{N}} \int_{\R^n} \phi_l(x)^p\, d\nu(x)<+\infty.
	\end{eqnarray}
	Then there exists a subsequence $\{\phi_{l_{j}}\}_{j\in \mathbb{N}}$ of $\{\phi_l\}_{l\in \mathbb{N}}$ and a non-negative  convex function $\phi \in \C\cap L_{p, e}(\nu)$, such that,
	\begin{eqnarray}\label{liminf}
	\int_{\R^n} \phi(x)^p \,d\nu(x) \leq \liminf_{j\rightarrow\infty} \int_{\R^n} \phi_{l_j}(x)^p \,d\nu(x) \ \ \mathrm{and}\ \ 
	\int_{\R^n}e^{-\phi^*(x)}\,dx\geq \limsup_{j\rightarrow\infty}\int_{\R^n}e^{-\phi_{l_j}^*(x)}\,dx.
	\end{eqnarray}
\end{lemma}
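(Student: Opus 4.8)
The plan is to extract a subsequence that converges locally uniformly on the open convex set $M_{\nu}$, to take $\phi$ to be the closure of the limit function, and then to establish the two inequalities of \eqref{liminf} by Fatou's lemma in its ordinary and its reversed form.

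First I would record uniform pointwise bounds. Since $\nu\in\mathscr{M}$ is even and not supported in a proper subspace, $M_{\nu}$ is a nonempty open origin-symmetric convex set, so $o\in M_{\nu}$; put $S=\sup_{l\in\mathbb{N}}\int_{\R^n}\phi_l^p\,d\nu<\infty$. By \eqref{estimate-p-6-12}, for each $x\in M_{\nu}$ one has $\phi_l(x)\le (C_{\nu,x}S)^{1/p}=:\gamma(x)<\infty$ for every $l$, so each $\phi_l$ is finite on $M_{\nu}$ and $\{\phi_l\}_{l\in\mathbb{N}}$ is pointwise bounded there. Lemma \ref{Rockafellar's result}, applied with the relatively open convex set $C=M_{\nu}$, then yields a subsequence $\{\phi_{l_j}\}_{j\in\mathbb{N}}$ converging uniformly on every compact subset of $M_{\nu}$ to a finite convex function $\psi$ on $M_{\nu}$; since the $\phi_{l_j}$ are even, non-negative and vanish at $o$, so is $\psi$. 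I would let $\phi$ be the lower semicontinuous convex hull of $\psi$, extended by $+\infty$ off $\overline{M_{\nu}}$; this $\phi\in\C$ is a closed proper even convex function with $\phi\ge 0$, $\phi(o)=0$, $\phi=\psi$ on $M_{\nu}$ and $\mathrm{int}(\dom(\phi))=M_{\nu}$.

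For the first inequality I would prove $\phi(x)\le\liminf_{j}\phi_{l_j}(x)$ for every $x\in\overline{M_{\nu}}$: on $M_{\nu}$ this is the pointwise convergence, while for $x\in\partial M_{\nu}$ one fixes $x_0\in M_{\nu}$, sets $x_t=(1-t)x_0+tx\in M_{\nu}$ for $t\in[0,1)$, passes to the limit in $j$ in the inequality $\phi_{l_j}(x_t)\le(1-t)\phi_{l_j}(x_0)+t\phi_{l_j}(x)$, and then lets $t\uparrow 1$, using the closure property $\lim_{t\uparrow1}\phi(x_t)=\phi(x)$. Since $\mathrm{supp}\,\nu\subseteq\overline{M_{\nu}}$, this gives $\phi^p\le\liminf_j\phi_{l_j}^p$ $\nu$-almost everywhere, whence Fatou's lemma yields $\int_{\R^n}\phi^p\,d\nu\le\liminf_j\int_{\R^n}\phi_{l_j}^p\,d\nu\le S<\infty$; in particular $\phi\in\C\cap L_{p, e}(\nu)$.

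The second inequality is where I expect the real difficulty. From $\phi_{l_j}^*(y)\ge\langle x,y\rangle-\phi_{l_j}(x)$ for $x\in M_{\nu}$, passing to the limit and then taking the supremum over $x\in M_{\nu}$, I would obtain $\liminf_j\phi_{l_j}^*(y)\ge\sup_{x\in M_{\nu}}(\langle x,y\rangle-\phi(x))$, and since $M_{\nu}=\mathrm{int}(\dom(\phi))$ and $\phi$ is closed, this supremum equals $\phi^*(y)$ (again by the segment/closure argument: a closed convex function is recovered from its affine minorants already over the interior of its domain). Hence $\limsup_j e^{-\phi_{l_j}^*(y)}\le e^{-\phi^*(y)}$ for all $y$. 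To pass to the integral one cannot invoke Fatou directly, so I would produce a $j$-independent integrable majorant: as $o$ is interior to $M_{\nu}$, there are $r>0$ with $Q=\mathrm{conv}\{\pm re_1,\dots,\pm re_n\}\subseteq M_{\nu}$ and $\rho>0$ with $B_{\rho}\subseteq Q$, and convexity together with the bounds of the first step give $\phi_{l_j}\le M_0:=\max_{1\le i\le n}\max\{\gamma(re_i),\gamma(-re_i)\}$ on $Q$, hence on $B_{\rho}$, for every $j$; therefore $\phi_{l_j}^*(y)\ge\rho|y|-M_0$ and $e^{-\phi_{l_j}^*(y)}\le e^{M_0}e^{-\rho|y|}=:F(y)$ with $\int_{\R^n}F<\infty$. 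The reverse Fatou lemma then delivers $\limsup_j\int_{\R^n}e^{-\phi_{l_j}^*}\le\int_{\R^n}\limsup_j e^{-\phi_{l_j}^*}\le\int_{\R^n}e^{-\phi^*}$, which is the second assertion in \eqref{liminf}. The two delicate points are thus the identity $\phi^*=\sup_{x\in M_{\nu}}(\langle x,\cdot\rangle-\phi(x))$ and the construction of the uniform linear minorant of the conjugates $\phi_{l_j}^*$ near infinity that legitimizes the reversed Fatou lemma.
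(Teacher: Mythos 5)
Your proposal is correct, and its first half follows the paper's own path: the uniform pointwise bound on $M_{\nu}$ via \eqref{estimate-p-6-12} together with \eqref{condition1}, locally uniform convergence of a subsequence from Lemma \ref{Rockafellar's result}, and extension of the limit to $\R^n$ by its closure (the paper takes radial limits from the origin, which gives the same closed convex function); for the first inequality in \eqref{liminf} the paper applies Fatou to the dilated functions $\phi_{l_j}(\lambda x)$ and then lets $\lambda\uparrow 1$ by monotone convergence, while you prove $\phi\leq\liminf_j\phi_{l_j}$ on all of $\overline{M_{\nu}}$ (boundary points handled by the segment/closure argument) and apply Fatou once — an equivalent, slightly more direct variant. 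Where you genuinely diverge is the second inequality: the paper imports the argument of \cite[Lemma 17]{CK15}, approximating $\phi^*$ from below by finite maxima $h_j$ of affine functions built from a dense sequence in $M_{\nu}$, using monotone convergence plus the observation that $\phi_{l_j}^*\geq h_{j_0}-\varepsilon$ for $j$ large, and an $\varepsilon$-limiting step; you instead establish the pointwise bound $\liminf_j\phi_{l_j}^*\geq\phi^*$ (via the supremum over $M_{\nu}$ and the fact that the conjugate of the closed function $\phi$ is computed on the interior of its domain) and then legitimize the exchange of limit and integral by the uniform majorant $e^{M_0-\rho|y|}$, obtained from the $j$-independent bound of $\phi_{l_j}$ on a small cross-polytope (hence a ball $B_{\rho}$) around the origin, followed by the reverse Fatou lemma. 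Both routes are sound: yours is self-contained and avoids the dense-sequence approximation and the $\varepsilon$-bookkeeping, at the cost of needing the uniform bound near the origin (which \eqref{estimate-p-6-12} supplies here anyway); the paper's route only uses pointwise convergence at finitely many points and integrability of a single $e^{-h_{j_0}}$, which is why it transfers verbatim from the $p=1$ setting of \cite{CK15}.
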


\begin{proof}  We prove this lemma following the ideas of the proof of \cite[Lemma 17]{CK15}, with emphasis on the difference and modification. 
	
	As $\nu\in \mathscr{M}$ is an even measure which is not supported in a lower-dimensional subspace of $\R^n$, the open set $M_{\nu}$ is nonempty and origin-symmetric with $o\in M_{\nu}$.  By \eqref{estimate-p-6-12} and \eqref{condition1},  for any $x\in M_{\nu}$, one has $
	\sup_{l\in \mathbb{N}} \phi_l(x)^p<+\infty. $   We would like to mention that, if $\phi\in L_{p, e} (\nu)$, then $\phi$ is finite near the origin and $\dom(\phi)\supseteq M_{\nu}$.  Lemma \ref{Rockafellar's result} can be applied to $\phi_l^p$ and $C=M_{\nu}$ to obtain the existence of a  subsequence $\{\phi_{l_j}\}_{j\in \mathbb{N}}$ of $\{\phi_l\}_{l\in \mathbb{N}}$, which converges to an even convex function $\phi: M_{\nu} \rightarrow \mathbb{R}$ pointwisely on $M_{\nu}$ and also uniformly on any closed bounded subset of $M_{\nu}$. The finiteness of $\phi$ on $M_{\nu}$ implies the continuity of $\phi$ on $M_{\nu}$. Moreover, $\phi$ is non-negative in $M_{\nu}$ and achieves its minimum at the origin with $\phi(o)=0$ (as $\phi_l(o)=0$ for each $l\in \mathbb{N}$). The function $\phi: M_{\nu}\rightarrow \R$ can be extended on $\R^n$, and the new function will  still be denoted by $\phi$. That is, let $\phi(x)=+\infty$  for $x\notin \overline{M_{\nu}}$;  while $\phi(x)=\lim_{\lambda\rightarrow 1^-} \phi(\lambda x)$ if  $x\in \partial M_{\nu}$. The limit in the latter case always exists, although it may be $+\infty$, due to the fact that the function $\lambda\mapsto \phi(\lambda x)$ is increasing on $\lambda\in (0, 1)$ following from the convexity of $\phi$ and $\phi(o)=0$.    Moreover, for any $x\in \overline{M_{\nu}}$, $\phi(\lambda x)$ is increasing to $\phi(x)$ as $\lambda$ is increasing to $1$. This shows that $\phi: \R^n\to \R$ is an even, non-negative, and lower semi-continuous convex function with $\phi(o)=0$.
	
	Note that the support of $\nu$ is a subset of $\overline{M_{\nu}}$.   It follows from Fatou's lemma and $\phi_{l_j}\rightarrow \phi$ pointwisely in $M_{\nu}$ that, for any given $\lambda\in (0, 1)$, one has \begin{align*}
	\int_{\R^n} \phi(\lambda x)^p\,d\nu(x) = \int_{M_{\nu}} \phi(\lambda x)^p\,d\nu(x)& \leq  \liminf_{j\rightarrow\infty} \int_{M_{\nu}}\phi_{l_{j}}(\lambda x)^pd\nu(x) \nonumber \\ & \leq   \liminf_{j\rightarrow\infty} \int_{\mathbb{R}^n}\phi_{l_{j}}(x)^pd\nu(x)<+\infty, 
	\end{align*} where the second inequality again follows from the monotonicity of  the function $\lambda\mapsto \phi_{l_j}(\lambda x).$ Similarly, by the monotone convergence theorem, one can also obtain that  \begin{eqnarray*}
		\int_{\R^n}\phi(x)^p\,d\nu(x)= \lim_{\lambda\rightarrow 1^{-}}\int_{\R^n}\phi(\lambda x)^p\,d\nu(x)\leq  \liminf_{j\rightarrow\infty} \int_{\mathbb{R}^n}\phi_{l_{j}}(x)^p\,d\nu(x)<+\infty.
	\end{eqnarray*}
	This completes the proof of the first argument in (\ref{liminf}).   In particular, $\phi\in L_{p, e}(\nu)\cap \C$,  $o\in  M_{\nu}\subseteq \dom(\phi)$ and hence   $J(e^{-\phi^*})<\infty$.

	The second argument in \eqref{liminf} indeed follows immediately from the proof of \cite[Lemma 17]{CK15}. For completeness, a brief explanation extracted from \cite[p.~3861-3862]{CK15} is provided here. First of all,  for any $y \in \mathbb{R}^n$, one has  $\phi^*(y)=\sup_{i\in \mathbb{N}}\{\langle x_i,y\rangle-\phi(x_i)\} $ where  $\{x_i\}_{i\in \mathbb{N}}$ is a dense sequence in $M_{\nu}$.  When $j$ is large enough, the origin lies in the interior of  the convex hull of $\{x_1,\cdots, x_j\}$, and this in turn implies that  $\exp(-h_j)$ with $h_j(y)=\sup_{1\leq i\leq j}\{\langle x_i,y\rangle-\phi(x_i)\})$  is integrable. Clearly, $h_j$  is increasing to $\phi^*$ as $j$ is increasing to $\infty$. By the monotone convergence theorem, one gets 
	\begin{eqnarray*}\int_{\mathbb{R}^n}e^{-\phi^*(y)}\,dy=\lim_{j\rightarrow\infty}\int_{\mathbb{R}^n}e^{-h_j(y)}\,dy.\end{eqnarray*} Let $\varepsilon > 0$. An integer $j_0$ (depending only on $\varepsilon$) can be found to have 		\begin{eqnarray}\label{condition4}
	0\leq \int_{\mathbb{R}^n}e^{-h_{j_0}(y)}\,dy-\int_{\mathbb{R}^n}e^{-\phi^*(y)}\,dy <\varepsilon.
	\end{eqnarray} The pointwise convergence of $\phi_{l_{j}}\rightarrow \phi$ (on $\{x_1,\cdots, x_{j_0}\}$)  yields that, for sufficiently large $j$, $\phi^*_{l_j}(x)\geq h_{j_0}(x)-\varepsilon$ holds for all $x\in \mathbb{R}^n$. Together with (\ref{condition4}), the following holds: \begin{eqnarray*} 
		\int_{\mathbb{R}^n}e^{-\phi^*(y)}\,dy >\int_{\R^n} e^{-h_{j_0}(y)}\,dy -\varepsilon\geq e^{-\varepsilon}\limsup_{j\rightarrow\infty}\int_{\mathbb{R}^n}e^{-\phi_{l_{j}}^*(y)}\,dy-\varepsilon.
	\end{eqnarray*} The second argument in \eqref{liminf}  then follows by letting $\varepsilon\rightarrow 0$. 
\end{proof}

Now we are ready to prove our main result, i.e., Theorem \ref{solution-lp-mink-6-2}. The $L_p$ surface area measure for log-concave functions in general does not have homogeneity, hence solving the related Minkowski problem (i.e., Problem \ref{problem-lp-6-2}) usually requires more delicate analysis. Most of the time, such problems shall require to solve constrained optimization problems, and  the method of Lagrange multipliers as in \cite{GHWXY19, XY17, ZSY17} should be used; this method should work here for a proof of Theorem  \ref{solution-lp-mink-6-2}.  However, we find that the ideas in the proof of \cite[Theorem 2]{CK15} work well in our case for $p>1$. Therefore, we decide to adopt  the ideas in \cite[Theorem 2]{CK15} in the proof of Theorem \ref{solution-lp-mink-6-2}. Unfortunately, due to the lack of homogeneity for the $L_p$ surface area measure for log-concave functions for $p>1$, it is unlikely to have the uniqueness of solutions to Problem \ref{problem-lp-6-2}.

\begin{proof}[Proof of Theorem \ref{solution-lp-mink-6-2}] We will search a solution for the following optimization problem \eqref{mini problem}: 
	\begin{eqnarray*} 
		\Theta=\inf\Big\{\mathbf{\Phi}_{p,\nu}(\phi):   \phi \in L_{p, e}(\nu) \ \ \mathrm{and} \ \ 0< J(e^{-\phi^*})<\infty  \Big\}. \end{eqnarray*}
	According to $(\phi^*)^*\leq \phi$ and $((\phi^*)^*)^*=\phi^*$ for any $\phi \in L_{p, e}(\nu)$, it can be checked that  
	\begin{align*}
	\mathbf{\Phi}_{p,\nu}(\phi)=\frac{1}{p}\int_{\R^n} \!\!  \phi(x)^pd\nu(x)-\log J(e^{-\phi^*})\geq \frac{1}{p}\int_{\R^n} \!\!  ((\phi^*)^*)^p(x)d\nu(x)-\log J(e^{-\phi^*})=\mathbf{\Phi}_{p,\nu}((\phi^*)^*).
	\end{align*} 
	Consequently, to find a solution to the optimization problem \eqref{mini problem}, it is enough to focus on the class of functions $\phi \in L_{p, e}(\nu)\cap \C$ which are also lower semi-continuous.   
	
	We now claim that  the optimization problem \eqref{mini problem} is well-defined. First of all, \eqref{p-th-moment}  implies that $|x| \in L_{p, e}(\nu)\cap \C$. Note that $|x|$ is  also lower semi-continuous. It is well-known that $|x|^*=\mathbf{I}_{\ball}$. Therefore, $J(e^{-(|x|)^*})=V(\ball)$ is finite. This in turn yields that the infimum of \eqref{mini problem} is not taken over an empty set and hence $\Theta<\infty$.  On the other hand, $\Theta$ is bounded from below. To see this, by \eqref{equation-6-10}, for any lower semi-continuous function $\phi  \in L_{p, e}(\nu)\cap \C$ with $J(e^{-\phi^*})\in (0, \infty)$, one has  
	\begin{align} \mathbf{\Phi}_{p,\nu}(\phi)  = \int_{\R^n}\phi(x)^p\,d\nu(x)-\log J(e^{-\phi^*})  \geq H\big(J(e^{-\phi^*})\big)
	-\int_{\R^n} \,d\nu(x), \label{est-6-15-1} \end{align}
	where  $H(t)=  c_{\nu} t^{\frac{1}{n}} -\log t$ for $t\in (0, \infty)$.   It can be easily checked that $H(\cdot)$ achieves its minimum at  $t_0=(n c_{\nu}^{-1})^n$ and $H(t)\geq H(t_0)>-\infty$. Thus, $\Theta>-\infty$ and the optimization problem \eqref{mini problem} is well-defined. We also would like to mention that \begin{align}\label{limit-6-15-1}\lim_{t\rightarrow 0^+} H(t)=\lim_{t\rightarrow +\infty} H(t)=+\infty. \end{align} 
	
	Let $\{\phi_l\}_{l\in \mathbb{N}}\subset L_{p,e}(\nu) \cap \C$ be a minimizing sequence of lower semi-continuous functions such that $J(e^{-\phi_l^*})\in (0, \infty)$ for each $l\in \mathbb{N}$ and \begin{align*} \Theta=\lim_{l\to \infty}\mathbf{\Phi}_{p,\nu}(\phi_l)\leq \mathbf{\Phi}_{p,\nu}(|x|)<+\infty. \end{align*} 
	Without loss of generality,  we can always assume that \begin{eqnarray}\label{formula-6-15}
	\sup_{l\in \mathbb{N}}\mathbf{\Phi}_{p,\nu}(\phi_l)\leq \mathbf{\Phi}_{p,\nu}(|x|)+1<+\infty.\end{eqnarray} 
	Together with \eqref{est-6-15-1} and \eqref{limit-6-15-1},  one sees that 
	\begin{align}
	0<\inf _{l\in \mathbb{N}}J(e^{-\phi_l^*}) \leq \sup_{l\in \mathbb{N}}J(e^{-\phi_l^*}) <+\infty.
	\end{align} 
	Combining with \eqref{formula-6-15}, the following holds: \begin{eqnarray*}
		\sup_{l\in \mathbb{N}} \int_{\R^n} \phi_l(x)^p\, d\nu(x)<+\infty.
	\end{eqnarray*} This is exactly the condition \eqref{condition1}. Therefore, Lemma \ref{existence} can be applied to get a subsequence $\{\phi_{l_{j}}\}_{j\in \mathbb{N}}$ of $\{\phi_l\}_{l\in \mathbb{N}}$ and a non-negative  convex function $\phi_0 \in \C\cap L_{p, e}(\nu)$, such that \eqref{liminf} holds, namely, 
	\begin{eqnarray*}
		\int_{\R^n} \phi_0(x)^p \,d\nu(x) \leq \liminf_{j\rightarrow\infty} \int_{\R^n} \phi_{l_j}(x)^p \,d\nu(x) \ \ \mathrm{and}\ \ 
		\int_{\R^n}e^{-\phi_0^*(x)}\,dx\geq \limsup_{j\rightarrow\infty}\int_{\R^n}e^{-\phi_{l_j}^*(x)}\,dx.
	\end{eqnarray*} 
	According to  \eqref{liminf}, one immediately has \begin{align}  \mathbf{\Phi}_{p,\nu}(\phi_0)\leq \liminf_{l\to \infty}\mathbf{\Phi}_{p,\nu}(\phi_l)= \lim_{l\to \infty}\mathbf{\Phi}_{p,\nu}(\phi_l)= \Theta \leq \mathbf{\Phi}_{p,\nu}(\phi_0).\label{max-6-15-1}\end{align} 
	Hence, $\phi_0$ solves the optimization problem  \eqref{mini problem}. Moreover,  $0<J(e^{-\phi_0^*})<\infty$ following from \eqref{limit-6-15-1}. Inequality \eqref{santalo-func-6-10}  yields that $0<J(e^{-\phi_0})<\infty$ as well. In particular, \begin{align} \lim_{|x|\rightarrow +\infty} \phi_0(x)=+\infty.\label{inf-6-15-1}\end{align}

	Let $\widetilde{\Omega}_{\phi_0}=\{y\in \R^n: \phi_0(y)=0\}$ and $\Omega_{\phi_0}=\{y\in \R^n: 0<\phi_0(y)<\infty\}$.  By the facts that $\dom(\phi_0)\supseteq M_{\nu}$ and $\phi_0$ is even, one gets that $\phi_0$ is continuous on $M_{\nu}$ and both $\widetilde{\Omega}_{\phi_0}$ and $\Omega_{\phi_0}$ are origin-symmetric. Moreover, \eqref{inf-6-15-1} yields that  $\widetilde{\Omega}_{\phi_0}$ is a bounded closed  (due to the lower semi-continuity of $\phi_0$)  convex set. Recall that if $\nu\in \mathscr{M}$, then $\nu(M_{\nu}\setminus L)>0$ holds for any bounded convex set $L\subset\R^n$. Consequently, $\nu(M_{\nu}\setminus \widetilde{\Omega}_{\phi_0})=\nu(M_{\nu}\cap  \Omega_{\phi_0})>0$ and $$\int_{\R^n} \phi_0^p(y)\,d\nu(y)=\int_{\Omega_{\phi_0}} \phi_0^p(y)\,d\nu(y)\geq \int_{M_{\nu}\setminus \widetilde{\Omega}_{\phi_0}} \phi_0^p(y)\,d\nu(y) >0.$$ In particular, $M_{\nu}\setminus \widetilde{\Omega} _{\phi_0} \subset  \Omega _{\phi_0}$ as $M_{\nu} \subset \dom(\phi_0).$ Thus, $ \Omega _{\phi_0}$ is an open set whose Lebesgue measure is strictly positive.

	Let $g: \R^n \to \R$ to be any even compactly support continuous function such that the support of $g$, denoted by $\mathrm{supp}(g),$  is a proper subset of $\Omega_{\phi_0}$. Moreover, the compact set $\overline{\mathrm{supp}(g)}$ is contained in $\Omega_{\phi_0}.$ Let $\phi_t=\phi_0 +tg$ and $\phi_t$ is a continuous function on $\Omega_{\phi_0}$. Note that    $\phi_0>0$ on $\overline{\mathrm{supp}(g)}$ and hence $\min_{x\in \overline{\mathrm{supp}(g)}}\phi_0(x)>0$. This further yields  the existence of $t_0>0$ such that $\phi_t$ is non-negative on $\Omega_{\phi_0}$ for all $t\in [-t_0, t_0]$. It is easily checked that for all $t\in [-t_0, t_0]$, $x\in \R^n$ and $p>1$, one has $\phi_t^p(x)\leq 2^p (\phi_0^p(x)+|t g(x)|^p)$ and hence $$\int_{\R^n} \phi_t^p(x)\,d\nu(x)\leq   2^p \int_{\R^n}\phi_0^p(x)\,d\nu(x) + 2^p |t|^p \int_{\R^n} | g(x)|^p\,d\nu(x)<\infty.  $$ 
	This means that $\phi_t\in L_{p, e}(\nu)$ for all $t\in [-t_0, t_0]$.  Also note that $$\phi_0(x)-t_0 \max_{x\in \R^n} |g(x)| \leq \phi_t(x)\leq \phi_0(x)+t_0 \max_{x\in \R^n} |g(x)|.$$ 
	By \eqref{def-dual-1}, one obtains that, for all $t\in [-t_0, t_0]$,  \begin{equation*}
	\phi_0^*(y)-t_0 \max_{x\in \R^n} |g(x)|\leq \phi_t^*(y)\leq \phi_0^*(y)+t_0 \max_{x\in \R^n} |g(x)|.
	\end{equation*} This concludes that $J(e^{-\phi_t^*}) \in (0, \infty)$ for all $t\in [-t_0, t_0]$ as	
	$$e^{-t_0 \max_{x\in \R^n} |g(x)|} J(e^{-\phi_0^*})  \leq J(e^{-\phi_t^*})\leq e^{t_0 \max_{x\in \R^n} |g(x)|} J(e^{-\phi_0^*}).$$
	Consequently, for any even and compactly support continuous function $g: \R^n \to \R$ with $\overline{\mathrm{supp}(g)}\subsetneq \Omega_{\phi_0}$, there exists $t_0>0$ such that $ \mathbf{\Phi}_{p,\nu}(\phi_0)\leq  \mathbf{\Phi}_{p,\nu}(\phi_t) $ holds for all $t\in [-t_0, t_0]$. Thus, $\phi_0$ satisfies that \begin{align}\label{variation-6-15} \frac{\,d}{\,dt} \mathbf{\Phi}_{p,\nu}(\phi_t)\bigg|_{t=0}=\frac{\,d}{\,dt} \bigg(\frac{1}{p}\int_{\R^n} \!\!  \phi_t(x)^pd\nu(x)\bigg)\bigg|_{t=0}-\frac{\,d}{\,dt} \big(\log J(e^{-\phi_t^*})\big)\bigg|_{t=0} =0. \end{align} By the dominated convergence theorem, one can easily get that \begin{align} \frac{\,d}{\,dt} \bigg(\frac{1}{p}\int_{\R^n} \!\!  \phi_t(x)^pd\nu(x)\bigg)\bigg|_{t=0}  \nonumber &=\frac{\,d}{\,dt} \bigg(\frac{1}{p}\int_{\R^n} \!\!  \big(\phi_0(x)+tg(x)\big)^pd\nu(x)\bigg)\bigg|_{t=0}\\ &= \int_{\R^n} \!\!  g(x) \phi_0(x)^{p-1}d\nu(x). \label{variation-6-15-1} \end{align} On the other hand, for all $x\in \R^n$ when $\phi_0^*$ is differentiable, it holds that \begin{align}\label{variation-6-15-22}
	\frac{\,d}{\,dt}\phi_{t}^*(x)\bigg |_{t=0}=-g(\nabla \phi_0^*(x)).\end{align}  We refer the readers to Berman and Berndtsson \cite[Lemma 2.7]{BB13} for a short proof.  In other words, formula \eqref{variation-6-15-22} holds for almost all $x$ in the interior of $\dom(\phi_0^*)$.  The dominated convergence theorem then gives \begin{align} \frac{\,d}{\,dt} \big(\log J(e^{-\phi_t^*})\big)\bigg|_{t=0} = \frac{1}{J(e^{-\phi_0^*})} \int_{\R^n} g(\nabla \phi_0^*(x)) e^{-\phi_0^*(x)}\,dx. \label{variation-6-15-321}
	\end{align} Together with \eqref{moment-form-1} for $\varphi=\phi_0^*$,  \eqref{variation-6-15}, \eqref{variation-6-15-1}, and \eqref{variation-6-15-321}, one has 
	\begin{align} \int_{\Omega_{\phi_0}} \!\! g(y) \phi_0(y)^{p-1}d\nu(y)&= \int_{\R^n} \!\! g(y) \phi_0(y)^{p-1}d\nu(y) \nonumber \\ &=\frac{1}{J(e^{-\phi_0^*})} \int_{\R^n} g(\nabla \phi_0^*(x)) e^{-\phi_0^*(x)}\,dx \nonumber  \\&=\frac{1}{J(e^{-\phi_0^*})} \int_{\R^n}g(y)\,d\mu(e^{-\phi_0^*}, y) \nonumber  \\&=\frac{1}{J(e^{-\phi_0^*})} \int_{\Omega_{\phi_0}}  g(y)\,d\mu(e^{-\phi_0^*}, y) \nonumber \end{align}  for any  even and compactly support continuous function $g: \R^n \to \R$ with $\overline{\mathrm{supp}(g)} \subsetneq \Omega_{\phi_0}$. This concludes that, on $\Omega_{\phi_0}$, \begin{align}   \phi_0^{p-1} \nu =\frac{1}{J(e^{-\phi_0^*})}  \mu(e^{-\phi_0^*}, \cdot).  \label{final-form-6-15-1} \end{align}  Let $\varphi=\phi_0^*$. Then 
	$\varphi\in \C$  is an even lower semi-continuous convex function, $\varphi^*\in \L^+$, and
	\begin{eqnarray}
	\nu =\frac{1}{J(e^{-\varphi})}  (\varphi^*)^{1-p} \mu_{1}(e^{-\varphi},\cdot) = \frac{1}{J(e^{-\varphi})}  \mu_{p}(e^{-\varphi}, \cdot)\ \ \ \mathrm{on} \ \ \Omega_{\varphi^*}. \label{equation-final-6-15} 
	\end{eqnarray}  By taking the integration from both sides of \eqref{final-form-6-15-1} or \eqref{equation-final-6-15} on $\Omega_{\varphi^*}$, one sees that 
	$$ \frac{1}{J(e^{-\varphi})}= \frac{\int_{\Omega} (\varphi^*(y))^{p-1}\,d\nu(y)}  {\int_{\Omega} \,d \mu_{1}(e^{-\varphi} ,y) } = \frac{\int_{\Omega} \,d\nu(y)}  {\int_{\Omega} \,d \mu_{p}(e^{-\varphi}, y)}.$$
	This completes the proof of Theorem \ref{solution-lp-mink-6-2}.  \end{proof} 

\vskip 2mm \noindent  {\bf Acknowledgement.}   The
research of DY was supported by a NSERC grant, Canada.

\vskip 0.5 cm

\vskip 2mm \noindent Niufa Fang, \ \ \ {\small \tt fangniufa@nankai.edu.cn}\\
{\em Chern Institute of Mathematics, Nankai University, Tianjin, 300 071, China
}

\vskip 2mm \noindent Sudan Xing, \ \ \ {\small \tt sxing@ualberta.ca}\\
{\em Department of Mathematical and Statistical Sciences, University of Alberta, Edmonton,  Alberta T6G 2G1, Canada
} 

\vskip 2mm \noindent Deping Ye, \ \ \ {\small \tt deping.ye@mun.ca}\\
{\em Department of Mathematics and Statistics, Memorial
	University of Newfoundland, St. John's, Newfoundland A1C 5S7,
	Canada
} 
\end{document}